\numberwithin{equation}{section}
\definecolor{webgreen}{rgb}{0,.5,0}
\definecolor{webbrown}{rgb}{.8,0,0}
\definecolor{emphcolor}{rgb}{0.95,0.95,0.95}
\newtheorem{theorem}{Theorem}[section]
\newtheorem{corollary}[theorem]{Corollary}
\newtheorem{proposition}[theorem]{Proposition}
\newtheorem{lemma}[theorem]{Lemma}
\theoremstyle{definition}
\newtheorem{definition}[theorem]{Definition}
\theoremstyle{definition}
\theoremstyle{definition}
\newtheorem{remark}{Remark}
\theoremstyle{definition}
\newtheorem{assumption}{Assumption}
\newcommand{\md}{\mathrm{d}}
\newcommand{\mif}{\mathrm{if}}
\newcommand{\ow}{\mathrm{otherwise}}
\newcommand{\mR}{\mathbb{R}}
\newcommand{\mN}{\mathbb{N}}
\newcommand{\mE}{\mathbb{E}}
\newcommand{\mF}{\mathbb{F}}
\newcommand{\mP}{\mathbb{P}}
\newcommand{\mK}{\mathbb{K}}
\newcommand{\mX}{\mathbb{X}}
\renewcommand{\epsilon}{\varepsilon}
\newcommand{\F}{\mathcal{F}}
\newcommand{\Pc}{\mathcal{P}}
\newcommand{\Oi}{\mathcal{O}}
\newcommand{\B}{\mathcal{B}}
\newcommand{\Ui}{\mathcal{U}}
\newcommand{\cA}{\mathcal{A}}
\newcommand{\bu}{\mathbf{u}}
\newcommand{\bU}{\mathbf{U}}
\newcommand{\ddo}{\mathrm{\ddot{o}}}
\newcommand{\intt}{\mathrm{int}}
\newcommand{\hato}{\hat{\mathrm{o}}}
\newcommand{\sgn}{\mathrm{sgn}}
\newcommand{\mt}{\mathbf{t}}
\newcommand{\mB}{\mathbf{B}}
\newcommand{\ubeta}{\underline{\beta}}
\newcommand{\obeta}{\overline{\beta}}
\title{Weak equilibria for time-inconsistent control: with applications to investment-withdrawal decisions}
\author{Zongxia Liang\thanks{
		Department of Mathematical Sciences, Tsinghua University, Beijing, China, email: \texttt{liangzongxia@tsinghua.edu.cn}}	
	\and Fengyi Yuan\thanks{
		Department of Mathematical Sciences, Tsinghua University, Beijing, China, email: \texttt{yfy19@mails.tsinghua.edu.cn}}}	
\begin{document}

\maketitle
\begin{abstract}
This paper considers time-inconsistent problems when control and stopping strategies are required to be made {\it simultaneously} (called stopping control problems by us). We first formulate the time-inconsistent stopping control problems under general multi-dimensional controlled diffusion model and propose a formal definition of their equilibria. We show that an admissible pair $(\hat{u},C)$ of control-stopping policy is equilibrium if and only if the auxiliary function associated with it solves the extended HJB system, providing a methodology to {\it verify} or {\it exclude} equilibrium solutions. {  We provide several examples to illustrate applications to mathematical finance and control theory. For a problem whose reward function endogenously depends on the current wealth, the equilibrium is explicitly obtained. For another model with a non-exponential discount, we prove that any constant proportion strategy can not be equilibrium. We further show that general non-constant equilibrium exists and is described by singular boundary value problems. This example shows that considering our combined problems is essentially different from investigating them separately. In the end, we also provide a two-dimensional example with a hyperbolic discount. }\\
\ \\
\noindent {\small\textbf{Keywords:}}
Time-inconsistency, Weak equilibria, Stopping-control problems, Extended HJB system, Sophisticated decision makers.\\
\ \\
\noindent \textbf{AMS Subject Classification (2010)}: 93E20, 60G40, 91A40, 91G80.
\end{abstract}

\section{Introduction}
In economics and finance, it is common that people make control and stopping decisions simultaneously. When making investment decisions, they are usually free to terminate the investment discretionarily; when participating the gambling, besides how much to bet, when to exit the casino is also an important decision to make; when provided with a voluntary early retirement option, they shall simultaneously decide how much to consume before (and after) retirement and when to retire.

In this paper, under the general multi-dimensional controlled diffusion model, we provide a solution theory for the stopping control problems with pay-offs of the form
\[
J(t,x;u,\tau)\triangleq \mE^{t,x} g(t,x,\tau,X^{u}_{\tau}).
\]
Unlike stopping control problems in existing literature, our problems are generally {\it time-inconsistent} because the reward function $g$ depends on the starting states $(t,x)$. Time-inconsistency appears when the optimal control or stopping policies at each starting state do not match with each other, in which case the dynamic programming principle is invalid and the problem to find an ``dynamic optimal control" is generally ill-posed. In this paper, we formulate the time-inconsistent problem as an intra-person game and propose the notion of equilibrium solution when stopping and control strategies are coupled together. Although time-inconsistent stopping or control problems have been widely studied in recent years, this paper is the first step towards the coupled problems along with time-inconsistency to the best of our knowledge. Several difficulties arise immediately.

There are several notions of equilibrium in existing literature, such as strong, weak, and mild equilibrium. We will discuss them in detail in Subsection \ref{LR}. For the present paper, the first question is to choose the notion of equilibrium on which we work. The strong equilibrium formulation is at its early stage and is found to be rather difficult to characterize. Strong equilibrium may not exist in some simple examples (see Section 4.4 in \citet*{He2019}). Mild equilibrium, on the other hand, only makes sense when we consider stopping (or discrete-valued) policies. It is not clear for now what the mild equilibrium for the control problem is. Therefore we choose to work on weak equilibrium formulation. However, it is not trivial to extend the notion of weak equilibrium to the context of stopping control. If stopping control strategies are considered together, one disadvantage is that these two parts of strategies influence each other and hence are fully coupled. To address this, we choose to work on weaker game-theoretical reasoning that ``unilateral deviation of stopping or control strategy {\it separately} does not improve utility". We emphasize that the proposed definition not only brings tractability for deriving associated extended HJB systems but is also consistent with the weak equilibrium formulations for pure control/stopping problems in existing literature. {  For completeness, we also briefly discuss the definition where the stopping and control strategies are allowed to deviate simultaneously. We prove that the latter is strictly stronger than the former and also explain why we do not concentrate on this stronger notion (see Definition \ref{stricteqdef} and Remark \ref{counterexm}).}

The second difficulty is the derivation of the associated extended HJB system. One of the main contributions of \citet*{Bjork2017} is to characterize the equilibrium value and {\it find} equilibrium strategies via extended HJB equations. The tractability of finding the equilibrium strategies is considered to be one of the main superiorities of considering solely the control problem. Therefore, establishing the associated extended HJB system when stopping and control are coupled together will be the central task of the research on this topic. Using cut-off and localization techniques and the properties of characterization operators of transition semigroups, we rigorously establish the associated extended HJB system, which does not only contain equations but also inequalities as well as a complicated boundary term. Combining the above arguments with parabolic PDE theory, we can weaken the conditions in \citet*{He2019}. We emphasize that the derived extended HJB system is an equivalent characterization of equilibria, which is thus beyond the verification scope. Moreover, the extended HJB system established in this paper is a nontrivial generalization of both the extended HJB equations in \citet*{Bjork2017} and the time-inconsistent variational inequalities in \citet*{Christensen2018}.

The last difficulty is the further characterization of the boundary term (\ref{sys3}) in the extended HJB system. This term is missed in pure control problems and appears as the celebrated smooth fitting principle in pure stopping problems. However, in pure stopping problems, smooth fitting is usually just a sufficient condition of equilibrium (see e.g. \citet*{Christensen2018}). In \citet*{Christensen2020}, it is proved that smooth fitting is also necessary for equilibrium, under a one-dimensional diffusion model. Using cut-off techniques mentioned above, we apply Peskir's generalized local time formula on (hyper-) surfaces (see e.g. \citet*{Peskir2007}) to prove that smooth fitting is in some sense both sufficient and necessary for the boundary term (\ref{sys3}) in extended HJB system. { The sufficiency part is useful for verification procedures (see our examples in Section \ref{exm}). The necessity part, on the other hand, states that the equilibrium value function is global $C^1$ along spatial directions. Global regularity of value functions is a very important topic in optimal stopping theory (see \citet*{DEANGELIS2020} and literature review therein), and we have obtained similar results in the time-inconsistent counterpart, yet based on stronger regularity assumptions on stopping boundary.}

We extend all of our results to the infinite time horizon case, and {  thoroughly investigate three examples, illustrating the applications of our theoretical framework to some practical financial models or interesting mathematical problems. Our examples include}:
\vspace{-5pt}
\begin{itemize}
	\item [(1)]An investment-withdrawal decision model with endogenous habit formation. Constant investment proportion with a one-side withdrawal threshold can still be equilibrium in this case, but this is only assured when the dependence on habit is not very ``strong" (see Proposition \ref{exmconclusion} for details). This strategy structure is similar to those obtained in existing literature, such as time-consistent stopping control problems \citet*{Karatzas2000}, pure stopping problems \citet*{Christensen2018} or pure control problems \citet*{He2019}. However, our numerical experiments show that combining the control and stopping strategies brings novel financial insights. For example, the withdrawal threshold will decline when the market risk is higher. See Subsection \ref{example1}
	\item [(2)]An investment-withdrawal decision model with logarithm utility and ambiguity on discount factor. In this example, constant investment proportion can not constitute equilibrium, no matter what the withdrawal strategy is. This phenomenon is rare in the existing literature, because if the desired solution can be obtained explicitly, the investment strategy is usually constant proportion (or time-dependent but wealth-independent proportion, if the time horizon is finite)\footnote{See \citet*{Karatzas2000}, \citet*{Ekeland2008}, \citet*{Yong2012},  \citet*{Bjork2017}, \citet*{Alia2017} and \citet*{He2019}, among others.}. { Moreover, with a two-point quasi-exponential discount, we further show that the equilibrium does exist, thus it must be a non-constant one. We find that in this case the control and stopping { parts} of the equilibrium pair are completely coupled and are described by two highly nonlinear singular boundary value problems (see (\ref{sBVP1})-(\ref{sBVP2})). They are themselves very interesting mathematical objectives and we show the existence of their solutions based on the cut-off technique, Leray-Schauder degree theory, and Green function (see Appendix \ref{extsBVP}). This example shows that combining stopping control problems with time-inconsistency leads to highly nontrivial and challenging problems. It also provides a theoretical counter-view against constant proportion investment suggested by Merton's theory or classical stopping-control problems even in the simplest market.} See Subsection \ref{nonexp}. { This example also constitutes the main contributions of this paper, both mathematical and financial.}
	{ 	
		\item[(3)]A stopping control decision problem about planar Brownian Motion, where the agent determines diffusion coefficient and stopping radius. The reward is a hyperbolic discount factor multiplied by the stopping radius, thus there is a trade-off between a larger radius and less discount. Using our theoretical framework, the rational strategy is to diffuse the system as much as possible, and choose a stopping radius that is proportional to $1/\sqrt{\beta}$, where $\beta$ is the discount rate. The value of this radius is related to a universal constant ($\approx$ 8.3419) determined by Bessel functions. This example is interesting in the aspect of mathematics and is the first step to further studies on time-inconsistent multi-dimensional stopping control problems. See Subsection \ref{2dexm}.
	}
\end{itemize}
\vspace{-5pt}
To conclude, the main contributions of the present paper are as follows:
\vspace{-5pt}
\begin{itemize}
	\item[(1)]We establish the framework for studying time-inconsistent stopping control problems under weak equilibrium formulation, and the proposed formulation is a nontrivial extension for weak formulations in pure stopping/control problems.
	\item [(2)]Using cut-off and localization techniques, we rigorously obtain an equivalent characterization of the equilibrium, which is an extended HJB system, and the assumptions needed are weakened. We believe that the extended HJB system established in this paper will become the foundation of future research on time-inconsistent stopping control problems in more specific models such as investment with discretionary stopping under non-exponential discount.
	\item [(3)]We build connections between the aforementioned HJB system and the smooth fitting principles in optimal stopping theory, and {  show that smooth fitting is in some sense sufficient and necessary for equilibrium conditions}.
	\item [(4)]{ We demonstrate two concrete investment-withdrawal problems shedding light on people's behaviors facing time-inconsistent preference, which is common in behavioral finance. One of these examples indicates that combining stopping control problems with time-inconsistency brings us essential differences and produces highly nontrivial problems. Finally, we also provide a multi-dimensional example controlling planar Brownian motion, which is interesting mathematically.}
\end{itemize}
\vspace{-5pt}
The rest of the paper is organized as follows: We formulate the time-inconsistent stopping control problem and define the equilibrium strategies in Section \ref{notation}. Section \ref{eqchar} and Section \ref{exm} include the main results of this paper. In Section \ref{eqchar}, we obtain characterizations of the equilibrium. In Section \ref{exm}, we provide three concrete examples to illustrate the theoretical results in Section \ref{eqchar}. Section \ref{conlude} concludes this paper. Technical proofs are mainly presented in Appendix \ref{proofsec3}. In Appendix \ref{fconddiscussion}, we discuss how to use PDE theory to obtain our assumptions in Section \ref{eqchar}. Appendices \ref{proofexmcon} and \ref{extsBVP} include some technical results related to our examples.

\subsection{Literature Review} \label{LR}
Classical stopping control problems are extensively studied, and they are typically {\it time consistent}, which means that at any given initial time and state, the agent conducts the static optimization and the resulting strategies are consistent between different starting states. \citet*{Karatzas2000} and \citet*{Karatzas2006} study stopping control problems in a specific setting of portfolio choice. Interpreting this as a cooperative game between ``controller" and ``stopper", \citet*{Karatzas2001}, \citet*{Karatzas2008}, \citet*{Bayraktar2013} and \citet*{Bayraktar2019}, among others, study the noncooperative version of stopping control problems. Stopping control modelings are also widely applied to retirement decisions research, see \citet*{Choi2008}, \citet*{Farhi2007}, \citet*{Dybvig2009}, \citet*{Jeon2020}, \citet*{Xu2020} and \citet*{Guan2020}.

%There are mainly two approaches to studying time-inconsistent problems. The first one is analyzing the value of static problems using a dynamic utility approach, or finding optimal control at any fixed starting state (\citet*{Karnam2017} and \citet*{Xu2013}). This is called pre-committed/naive planning. The second approach is formulating the problems as intra-person games and finding their equilibria (sophisticated planning). In this paper, we follow the second approach and provide a unified theory to study continuous time sophisticated planning, in which the agent faces the control problem {\it and} stopping problem {\it simultaneously}.

Although originated in the early but seminal paper \citet*{strotz1955}, game-theoretical formulations for time-inconsistent problems in continuous time have been taken into researchers' sight only in decades. Earlier developments focus on control problems, where the state dynamics are controlled and the terminal times are fixed. In some specific settings such as portfolio management or economic growth, \citet*{Ekeland2008} and \citet*{Ekeland2010} propose the notion of equilibrium in continuous time problems and provide existence. Among many other works on the same topic, one breakthrough is that \citet*{Bjork2017} derives the necessary conditions of equilibrium under the general Markovian diffusion model, which is an extended HJB equation system. This formulation of equilibrium is usually called weak equilibrium in the literature, and, roughly speaking, reveals the game theoretical reasoning ``{\it unilateral deviation does not improve the pay-off}" in a weak sense. Formally, $\hat{u}$ is said to be a weak equilibrium if
\[
\limsup_{\epsilon\to 0}\frac{J(t,x;u_{\epsilon})-J(t,x;\hat{u})}{\epsilon}\leq 0,\ \ \forall (t,x),
\]
where $u_{\epsilon}$ is properly-defined perturbation of $\hat{u}$.

\citet*{He2019} rigorously proves the extended HJB equation system for weak equilibrium formulation. \citet*{Hu2012} and \citet*{Hu2017} study the stochastic linear-quadratic control in similar formulation, and importantly, they obtain a uniqueness result for this particular control model. More recently  \citet*{Hu2020} obtains an analytical solution to a portfolio selection problem with the presence of probability distortion. In \citet*{He2019} and \citet*{Huang2020a}, the authors extend the scope of research on this topic and propose different notions of equilibria. An important one of those newly proposed notions is strong equilibrium, which requires $\forall (t,x), \exists \epsilon_0>0$,
\[
J(t,x;u_{\epsilon})-J(t,x;\hat{u})\leq 0,\ \ \forall \epsilon < \epsilon_0.
\]
This notion is closer to the concept of subgame perfect equilibrium (SPE) in game theory, but is rather difficult to study, as revealed in \citet*{He2019} and \citet*{Huang2020a}. Most recently, \citet*{Hernandez2020} studies time-inconsistent control problems in the most general non-Markovian framework, where the notion of equilibrium is also refined.

Stopping decision problems with time-inconsistency, on the other hand, are at the early stages of research. In recent years there are substantial works and theoretical breakthroughs on this topic. Series of papers including \citet*{Huang2018}, \citet*{Huang2020}, \citet*{Bayraktar2020}, \citet*{Huang2020b}, \citet*{Huang2020c} and \citet*{Huang2019} study time-inconsistent stopping problems under the so-called ``mild equilibria" ( \citet*{Bayraktar2020}), which only considers the deviation from the strategy ``stopping" to ``continuation": $C$ is an equilibria stopping policy (continuation region) if
\[
J(t,x;{\rm stopping}) \leq J(t,x;{\rm continuation}), \forall (t,x)\in C.
\]
We also note that there are indeed papers adopting weak equilibrium formulations when studying stopping problems, such as \citet*{Christensen2018} and \citet*{Christensen2020}, where not only deviation from ``stopping" to ``continuation", but also any infinitesimal perturbation of stopping policies are considered (see Definition \ref{eqdef} and remarks therein for detailed discussion). { In the context of time-inconsistent stopping problems, \citet*{Bayraktar2022} studies the relationships among different notions of equilibrium solutions. Similar to our paper, they provide full characterizations of weak equilibrium (under a one-dimensional diffusion model). Moreover, they prove that under some conditions, optimal mild equilibrium is strong, so that the existence of strong (weak) equilibrium can be obtained.} \citet*{Ebert2020} investigates weak equilibrium of time-inconsistent stopping problems from an economic point of view, and \citet*{Tan2021} thoroughly studies the smooth fitting principle with the presence of time-inconsistency, which is also one of our concerns. {In addition to the {\it pure} stopping rules which consist of the mainstream of current research,  \citet*{Bodnariu2022} focuses on the local time pushed {\it mixed} stopping rules and also provide smooth fitting principle under their formulation. Interestingly, they find that there is sometimes a dichotomy between the existences of pure and local time pushed mixed stopping rules.}

\section{Notations and model formulations}
\label{notation}
We first introduce key ingredients for our problem formulation:
\begin{itemize}
  \item {\bf State space}: Let $\mX\subset \mR^n$ be a region, equipped with the Euclidean norm $\|\cdot\|$ and $\B(\mX)$, the Borel $\sigma$-algebra induced by it. $E=[0,T)\times \mX$ will be the (space-time) state space.
  \item {\bf Control space}: Let $\bU$ be a Polish space, equipped with $\B(\bU)$, the Borel $\sigma$-algebra induced by its metric. We assume that the control takes value in $\bU$.
  \item {\bf Set of admissible controls}: Let $\Ui$ be a subset of all measurable closed-loop control. In other words, any $u\in \Ui$ is a measurable map $E\ni (t,x)\mapsto u(t,x)\in \bU$. Some assumptions will be imposed on $\Ui$ to guarantee the well-posedness of the problem (see Assumption \ref{assumption1}).
  \item {\bf Set of admissible stopping policies}: Let $\Oi$ be a subset of the collection of relative open subsets of $E$, including $\varnothing$ and $E$. $\Oi$ will be considered as the set of admissible continuation region, hence can be equivalently thought as the set of stopping policies.
  \item {\bf The implemented stopping time}: $\tau_{(u,C,s)}\triangleq \inf\{s\leq r\leq T: (r,X^u_r)\notin C          \}$. After the agent has chosen the {\it control} $u\in \Ui$, the {\it stopping policy} $C\in \Oi$, and the {\it minimal stopping time} $s$, he will implement $\tau_{(u,C,s)}$ as his stopping strategy. In other words, he will not consider stopping before $s$, and after time $s$, he will stop immediately when the state process under $u$ exits the continuation region $C$ for the first time.
  \item {\bf Set of admissible strategies at time $t$}: $\cA(t) \triangleq \{(u,\tau_{(u,C,s)}): u\in \Ui, s\geq t, C \in \Oi\}$.
\end{itemize}
In addition, the mathematical notations that will be used are listed below for convenience:
\begin{itemize}
  \item $C^{1,2}(\Omega)$ is the set of the functions that are once continuously differentiable in $t$ and twice continuously differentiable inside some subset $\Omega\subset E$.
  \item $C^0(\Omega)$ is the set of continuous functions.
  \item $C^{\infty}_c(\Omega)$ is the set of smooth functions with compact support.
  \item $d((s,y),(t,x))=|s-t|^{1/2}+\|x-y\|$ is the parabolic distance. $d((s,y),A)=\inf\{d((s,y),(s',y')):(s',y')\in A\}$.
  \item $B_{t,x}(\delta)=(t,t+\delta^2)\times \{x':\|x'-x\|<\delta\}$ is the parabolic cylinder.
  \item $\partial \Omega \in C^2$ means that for any $(t,x)\in \partial \Omega$ there exist $\delta>0$ which is sufficiently small, and a function $P\in C^{1,2}(B_{t,x}(\delta))$ such that $\{(s,y)\in B_{t,x}(\delta):(s,y)\in \partial \Omega\}=\{(s,y)\in B_{t,x}(\delta):P(s,y)=0\}$.
\end{itemize}
\begin{comment}
Let $\mX\subset \mR^n$ be a region, and $E=[0,T)\times \mX$ be the state space. Let $\Ui$, which is a subset of all measurable closed loop control on state space $E$, be the set of admissible control strategy, on which some assumptions will be imposed later. We assume that the control takes value in $\bU$. Let $\Oi$ be a subset of the collection of open subsets of $\mR^{n+1}$ with $C^2$ boundary, including $\varnothing$ and $\mR^{n+1}$, which will be considered as the set of admissible continuation region, We can identify $\tilde{C}\in \Oi$ to the {\bf continuation region} in the following way: consider $C\triangleq\tilde{C}\cap E$, and $\partial C\triangleq \partial \tilde{C}\cap E$, and $C$ is interpreted as the stopping policy of the agent so that he chooses to continue if and only if $(t,x)\in C$. Slightly abusing the notation, we still write $C\in \Oi$, and all tangent spaces and normal vectors to $\partial C$ are interpreted as those of $\partial\tilde{C}$.
\end{comment}
The probability basis for our problem is as usual. Let $(\Omega,\F,\mF,\mP)$ be a complete probability space supporting a standard $n$-dimensional Brownian motion $W=\{W_t:0\leq t \leq T\}$, where $\mF = \{\F_t:0\leq t \leq T\}$ is a filtration satisfying the usual conditions, and $\F = \F_T$\footnote{Here we are not concerned with what the filtration is. We choose to restrict the information by allowing only the Markovian closed-loop strategies, instead of specifying the filtration. See Remark \ref{info} for detailed discussion}. For any $u\in \Ui$, we consider the {\it strong formulation} of the controlled state dynamics:
\begin{equation}\label{dynamic}
  \left\{
  \begin{aligned}
  &\md X^u_t = \mu(t,X^u_t,u(t,X^u_t))\md t + \sigma(t,X^u_t,u(t,X^u_t))\md W_t,\\
  & X^u_0=x_0\in \mX.
  \end{aligned}
  \right.
\end{equation}
For any $h(\cdot,\cdot)\in C^{1,2}(B_{t,x}(\delta))$, $\delta>0$, we define the characterization operator $A^u$ of $X^u$ by\footnote{We denote by $M^{\mt}$ the transpose of the matrix $M$.}
\[
A^{u}h(t,x)\triangleq h_t(t,x)+\Theta^u(t,x)^{\mt}h_x(t,x)+\frac{1}{2}{\rm tr}(h_{xx}(t,x) \Lambda^u(t,x)\Lambda^u(t,x)^{\mt}),
\]
where
\begin{align*}
&\Theta^{u}(t,x)\triangleq \mu(t,x,u(t,x)),\\
&\Lambda^{u}(t,x)\triangleq \sigma(t,x,u(t,x)).
\end{align*}
We make the standing assumption on the admissible set $\Ui$, which is crucial for the analysis later, as follows:
\begin{assumption}
\label{assumption1}For any $u\in \Ui$, we have:
  \begin{itemize}
    \item[(1)] $\bU\subset \Ui$.\footnote{Here we identify any $\bu \in \bU$ with the constant map $(t,x)\mapsto \bu$. Therefore any element in $\bU$ can also be seen as a closed-loop control.}
    \item[(2)] $\Theta^u$ and $\Lambda^u$ are Lipschitz in $x\in \mR^n$, uniformly in $t$.
    \item[(3)] $\Theta^u$ and $\Lambda^u$ are right continuous in $t\in [0,T]$, and are bounded in $t$, uniformly for $x$ in any compact subsets of $\mR^n$.
    \item[(4)] For any $(t,x)\in E$, the solution of (\ref{dynamic}) with initial condition $X_t=x$, denoted by $X^{u,t,x}$, satisfies $\mP(X^{u,t,x}_s\in \mX, \forall\ 0\leq s\leq T)=1$.
  \end{itemize}
\end{assumption}
Based on the theory of stochastic differential equations (see e.g., \citet*{Friedman1975} or \citet*{Yong1999}), under Assumption \ref{assumption1}, (\ref{dynamic}) is well-posed in the strong sense, and the solution is strong Markovian, for any $u\in \Ui$. We introduce the Markovian family $ \{\mP^{t,x}\}_{(t,x)\in E}$ with $\mP^{0,x_0}=\mP$.

We now state our problem. The agent aims to maximize the pay-offs $J(t,x;u,\tau)$ among all admissible strategies $(u,\tau)\in \cA(t)$, which is given by
\[
J(t,x;u,\tau)\triangleq \mE^{t,x} g(t,x,\tau,X^{u}_{\tau}).
\]
However, due to the dependence of the reward on state, the problem is generally time-inconsistent, and it does not make sense to find the dynamic ``optimal" strategies\footnote{For the discussion on time-inconsistency for pure control problems, see \citet*{Ekeland2008} or \citet*{Yong2012}. For discussions on stopping problems, refer to \citet*{Huang2020}.}.  As in \citet*{Bjork2017} and \citet*{He2019}, we consider the perturbed control of $u\in \Ui$ by $\bu \in \bU, \epsilon >0$, which is defined as
\begin{equation}
\label{perturbation}
u_{(t,\epsilon,\bu)}(s,y) = \left\{
\begin{aligned}
&\bu,  \ \ s\in[t,t+\epsilon),   \\
&u(s,y),\ \ s\notin [t,t+\epsilon).
\end{aligned}
\right.
\end{equation}
We are now ready to give the definition of equilibrium policies of time-inconsistent stopping control problems.
\begin{definition}
\label{eqdef}
$(\hat{u},C)\in \Ui\times \Oi$ is said to be an equilibrium if and only if all of the followings hold with $\hat{\tau}\triangleq \tau_{(\hat{u},C,t)}$:
\begin{align}
  & g(t,x,t,x)\leq J(t,x;\hat{u},\hat{\tau}),\ \ \forall (t,x)\in E, \label{eqcon1}\\
  & \limsup_{\epsilon \to 0}\frac{J(t,x;\hat{u},\tau_{(\hat{u},C,t+\epsilon)})-J(t,x;\hat{u},\hat{\tau})}{\epsilon} \leq 0, \ \ \forall (t,x)\in E, \label{eqcon2}\\
  & \limsup_{\epsilon \to 0}\frac{J(t,x;\hat{u}_{(t,\epsilon,\bu)},\tau_{(\hat{u}_{(t,\epsilon,\bu)},C,t)})-J(t,x;\hat{u},\hat{\tau})}{\epsilon} \leq 0,\ \ \forall (t,x)\in C, \bu \in \bU. \label{eqcon3}
\end{align}
\end{definition}
\begin{remark}
  This paper provides a unified theory to investigate the stopping and control problems. Noting that $\Ui$ and $\Oi$ are only required to be some subset of universal {\bf feasible} action space, our work generalizes previous literatures. Indeed, take $\Ui=\{u_0\}$ to be the singleton, the problem degenerates to pure stopping problem, as in \citet*{Christensen2018}. Take $\Oi=\{E\}$, the problem degenerates to pure control problems, as in \citet*{Bjork2017} and \citet*{He2019}.
\end{remark}
\begin{remark}
\label{info}
 It is crucial to specify how much information the agent can make use of. Indeed, limited information is one important reason for the occurrence of time-inconsistency. Most literatures impose the limitation on filtration: the control process is required to be measurable with respect to the filtration generated by state process. In the present setting of (Markovian) controlled-diffusion model, we choose to impose similar limitation implicitly by allowing only closed-loop control. This is the analogy of considering only pure Markovian stopping times when studying time-inconsistent stopping problems, as in \citet*{Christensen2018}, \citet*{Huang2018}, among others.
\end{remark}
\begin{remark}
\label{rmk1}
   If $(t,x)\notin C$, (\ref{eqcon1}) becomes equality and hence trivial. If $(t,x)\in C$, (\ref{eqcon1}) states that it is better to continue than to stoping. However, $(t,x)\in C$ implies that the equilibrium policy commands the agent to continue. As such, (\ref{eqcon1}) states that the agent has no reason to deviate the equilibrium stopping policy from continuation to stopping. (\ref{eqcon2}) requires that the agent is not willing to deviate even an infinitesimals from equilibrium stopping policy. In conclusion, our definition requires the agent not to deviate from equilibrium stopping policy, if all the $(t,x)$-agents follow the equilibrium.
\end{remark}
\begin{remark}
\label{rmk2}
  (\ref{eqcon3}) states that if all the $(t,x)$-agents follow the equilibrium (control and stopping policy), then he is not willing to deviate from equilibrium control policy. Here the $(t,x)$-agents with $(t,x)\notin C$ is irrelevant because they have stopped and exited the system, as the equilibrium stopping policy commands.
\end{remark}
\begin{remark}
It should be noted that in Remarks \ref{rmk1} and \ref{rmk2}, the game-theoretical concept of equilibrium is in a weak sense. Indeed, when the limit in (\ref{eqcon2}) or (\ref{eqcon3}) is 0, then deviation from equilibrium may indeed improve the preference level. However, as understood in \citet*{Hernandez2020}, if we ignore those improvements that are as small as a proportion of length of time interval perturbed, the definition of (weak) equilibrium fits into the game-theoretical consideration. For other types of equilibrium, see \citet*{Huang2020a}, \citet*{Bayraktar2020} for examples. In the context of time-inconsistent stopping, different types of equilibrium have also been studied. \citet*{Christensen2018} and \citet*{Christensen2020} consider the weak equilibrium, which we adopt. Generally it is difficult to characterize strong equilibrium, even for pure stopping or control problems, see \citet*{Huang2020a} for this under the setting of discrete Markov chain. It shall be an interesting topic for future work to characterize strong equilibrium for continuous-valued state process, in pure stopping and control problems, as well as stopping-control problems.
\end{remark}
\begin{remark}
  Note that $\tau_{(u,C,t+\epsilon)}=\epsilon + \tau_{(u,C,t)}\circ \theta_{\epsilon}$, where $\{\theta_t\}_{t\geq 0}$ is the family of shift operators. Therefore, comparing to \citet*{Christensen2018}, the infinitesimal perturbation of $\tau$ is in the time horizon $\epsilon$, not in the space horizon $\tau_{\epsilon}=\inf\{s\geq 0:|X_s-X_0|\geq \epsilon\}$. In our case, to make two perturbations of control and of stopping consistent with each other, we consider perturbation in time. Note that the perturbation of control can not be in space, because that would destroy the Lipschitz property of $\Theta^{u}$ and $\Sigma^{u}$ so the SDE could be ill-posed.
\end{remark}
{  In Definition \ref{eqdef}, the possible deviations of control and stopping policies are separable, i.e., deviating from stopping policies when fixing control policies, and deviating from control policies when fixing stopping policies. This definition makes perfect sense from game theoretical perspective if the problem is understood as a cooperative stopper-controller problem, where one agent controls the system and another chooses when to terminate. For the one-agent setting in this paper, it is very natural to consider concept of equilibrium where deviations of policies are allowed to be mixed. Here we introduce the following definition of {\it strict} weak equilibrium.
\begin{definition}
	\label{stricteqdef}$(\hat{u},C)\in \Ui\times \Oi$ is said to be a strict weak equilibrium if and only if the followings hold with $\hat{\tau}=\tau_{(\hat{u},C,t)}$:
	\begin{align}
		& g(t,x,t,x)\leq J(t,x;\hat{u},\hat{\tau}),\ \ \forall (t,x)\in E, \label{stricteqcon1}\\
		& \limsup_{\epsilon \to 0}\frac{J(t,x;\hat{u}_{(t,\epsilon,\bu)},\tau_{(\hat{u}_{(t,\epsilon,\bu)},C,t+\epsilon)})-J(t,x;\hat{u},\hat{\tau})}{\epsilon} \leq 0,\ \ \forall (t,x)\in E, \bu \in \bU. \label{stricteqcon2}
	\end{align}
\end{definition}
In (\ref{stricteqcon1}) we capture the deviation of policies when the stopping part is ``from continuing to stopping", and in this case the control policy is absent because it is irrelevant. Thus (\ref{stricteqcon1}) is exactly the same as (\ref{eqcon1}). In (\ref{stricteqcon2}), on the other hand, we hope to describe the change of policies when stopping part is from stopping to continuing. Here comes a dilemma: in this case deviation for control policy only matters in continuation region, while for stopping policy it is more important to consider stopping region. That is why we require all $(t,x)\in E$ rather than $(t,x)\in C$ to satisfy (\ref{stricteqcon2}). This paradox is the most important reason that makes us think that weak equilibrium is a better concept for studying time-inconsistent stopping control problems. This point is also the crucial point that makes strict weak equilibrium stronger than weak equilibrium. In Section \ref{eqchar} after stating the characterization theorem in infinite time horizon case (Theorem \ref{thminf}), we will briefly explain why strict weak equilibrium implies a weak one (see Remark \ref{strictchar}). At the end of Section \ref{eqchar} we provide an example showing a weak equilibrium need not to be strict (see Remark \ref{counterexm}). In the rest of this paper, we focus on weak equilibrium.

}
\begin{comment}
In the last part of this section, we introduce three types of different decision makers.
\begin{definition}
  The decision maker is said to be:
  \begin{itemize}
    \item[(1)] {\it sophisticated}, if he chooses the equilibrium policies $(\hat{u}_{\rm s},C_{\rm s})$.
    \item[(2)] {\it precommited}, if he chooses the policies $(\hat{u}_{\rm p},C_{\rm p})$ such that $(\hat{u}_{\rm p},\hat{\tau}_{\rm})$ solve
        \[
        v_{\rm p}(0,x_0)=\sup_{(u,\tau)\in \cA(0)}\mE^{0,x_0}g(0,x_0,\tau,X^u_{\tau}).
        \]
    \item[(3)] {\it naive}, if he chooses the policies $(\hat{u}_{\rm n},C_{\rm n})$ with
        \begin{align*}
        &\hat{u}_{\rm n}(t,x)=u^{t,x}(t,x), \\
        &\hat{C}_{\rm n} = \{(t,x)\in E:v_{\rm n}(t,x)>g(t,x,t,x)\},
        \end{align*}
        and $(\hat{u}^{t,x},\tau^{t,x})$ solves
        \[
         v_{\rm n}(t,x)=\sup_{(u,\tau)\in \cA(t)}\mE^{t,x}g(t,x,\tau,X^u_{\tau}).
        \]
  \end{itemize}
\end{definition}
\begin{remark}
In Section \ref{exm} we will see in a concrete example that the chosen policies of these three different types of decision makers are not the same. For exponential discount and reward function that is independent from the current state, these three types of decision makers coincide, by dynamic programming principle. See also \citet*{Huang2020a}, \citet*{He2019a} for the illustration for pure stopping problem.
\end{remark}
\end{comment}
\section{Characterizations of the equilibrium}
\label{eqchar}
We now present the main results of this paper, including assumptions and technical lemmas that are needed for proofs. This section is further divided into two subsections. In Subsection \ref{eqcharfinite} we present results for finite time horizon, and all proofs are provided in Appendix \ref{proofsec3}. In Subsection \ref{eqcharinf} we briefly discuss the infinite time horizon case, which will be used for in Section \ref{exm}, but we omit all the proofs there because they are similar to the finite time horizon case.
\subsection{Finite time horizon}
\label{eqcharfinite}
In this subsection we fix $(\hat{u},C)\in \Ui\times \Oi$ and investigate whether it is equilibrium. For convenience, we drop all the superscripts $\hat{u}$, e.g., $\Lambda=\Lambda^{\hat{u}}$, $\Sigma=\Sigma^{\hat{u}}$, $X= X^{\hat{u}}$ if needed. Moreover, we denote by $L^{\infty}_{\rm poly}$ the space of function that has at most polynomial growth at infinity, i.e.,
\[
L^{\infty}_{\rm poly}=\{f:E\to \mR \big | |f(t,x)|\leq M(1+\|x\|)^{\gamma} {\rm\ as\ }\|x\|\to \infty, {\rm\ for\ some\ }\gamma>0{\rm \ and\ }M>0\}.
\]
We have the following main result of this paper:
\begin{theorem}
\label{mainthm}
If the {\bf auxiliary function} $f(s,y,t,x)\triangleq\mE^{t,x}g(s,y,\hat{\tau},X^{\hat{u}}_{\hat{\tau}})$ satisfies:
  \begin{align}
  \label{fcond}
  \tag{H1}
  &f(s,y,\cdot,\cdot)\in C^{1,2}(C)\cap C^0(E)\cap L^{\infty}_{\rm poly},\forall (s,y)\in E.
  \end{align}
  Then $(\hat{u},C)\in \Ui\times \Oi$ is an equilibrium if and only if $f$ and $C$ solve the following system:
  \begin{align}
     & A^{\hat{u}}f(s,y,t,x) = 0, \forall (t,x)\in C, (s,y)\in E, \label{sys1}\\
     & \sup_{\bu \in \bU}A^{\bu}f(t,x,t,x) = 0, \forall (t,x)\in C, \label{sys2}\\
     & A^{\hat{u}}g(t,x,t,x)\leq 0,\forall (t,x)\in \intt(D), \label{sysplus}\\
     & \limsup_{\epsilon \to 0}\frac{\mE^{t,x}[f(t,x,t+\epsilon,X_{t+\epsilon})-f(t,x,t,x)]}{\epsilon}\leq 0,\forall (t,x)\in \partial C, \label{sys3}\\
     & f(s,y,T,x) = g(s,y,T,x),\forall x,y\in X, s\in [0,T), \label{sys4}\\
     & f(s,y,t,x) = g(s,y,t,x), \forall (t,x)\in D, (s,y)\in E,\label{sys5}\\
     & f(t,x,t,x)\geq g(t,x,t,x), \forall  (t,x)\in E, \label{sys6}
  \end{align}
   where $D \triangleq E \backslash C$.

   Furthermore, when $(\hat{u},C)$ is equilibrium, we have
   \begin{equation}
\hat{u}(t,x)=\mathop{{\rm argmax}}_{\bu\in \bU}A^{\bu}f(t,x,t,x),\label{equiuhat}
   \end{equation}
   for those $(t,x)\in C$ such that the map $\displaystyle (t,x)\mapsto \mathop{{\rm argmax}}_{\bu\in \bU}A^{\bu}f(t,x,t,x)$ is well-defined.
\end{theorem}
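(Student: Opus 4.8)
The whole argument hinges on the identity $J(t,x;\hat u,\hat\tau)=f(t,x,t,x)$, which holds by the very definition of the axillary function, together with strong-Markov decompositions of the two perturbed pay-offs; once these are in hand, the three clauses of Definition \ref{eqdef} can be matched line by line against the extended HJB system. First I would establish, using the strong Markov property of $X^{\hat u}$ at the deterministic time $t+\epsilon$ and the observation that, conditionally on $\F_{t+\epsilon}$, the first exit time of $X^{\hat u}$ from $C$ after $t+\epsilon$ is exactly the $\hat\tau$ of the $(t+\epsilon,X^{\hat u}_{t+\epsilon})$-agent, the exact identity
\[
J(t,x;\hat u,\tau_{(\hat u,C,t+\epsilon)})=\mE^{t,x}\big[f(t,x,t+\epsilon,X^{\hat u}_{t+\epsilon})\big],\qquad (t,x)\in E,
\]
and, for $(t,x)\in C$, the approximate identity
\[
J\big(t,x;\hat u_{(t,\epsilon,\bu)},\tau_{(\hat u_{(t,\epsilon,\bu)},C,t)}\big)=\mE^{t,x}\big[f(t,x,t+\epsilon,X^{\bu}_{t+\epsilon})\big]+o(\epsilon),\qquad \bu\in\bU,
\]
where the error term collects the event that $X^{\bu}$ leaves $C$ before $t+\epsilon$; since $(t,x)$ is interior to $C$, that event has probability decaying faster than any power of $\epsilon$, and \eqref{fcond} together with the moment bounds coming from Assumption \ref{assumption1} make its contribution $o(\epsilon)$. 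I would also record that \eqref{sys4} and \eqref{sys5} are forced by the definition of $f$ alone (as $\hat\tau=t$ on $D$ and $\hat\tau\to T$ at the horizon, together with $f\in C^0(E)$), and that \eqref{sys1} follows from \eqref{fcond} and Dynkin's formula on balls compactly contained in $C$; thus these three lines hold automatically under \eqref{fcond} and carry no equilibrium content.

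With these preliminaries, the matching proceeds as follows. Clause \eqref{eqcon1} is literally $g(t,x,t,x)\le f(t,x,t,x)$, i.e.\ \eqref{sys6}. By the first identity above, clause \eqref{eqcon2} reads $\limsup_{\epsilon\to0}\epsilon^{-1}\mE^{t,x}[f(t,x,t+\epsilon,X^{\hat u}_{t+\epsilon})-f(t,x,t,x)]\le0$ for every $(t,x)\in E$; on $C$, a localized application of It$\hato$'s formula together with \eqref{sys1} shows the limit equals $A^{\hat u}f(t,x,t,x)=0$, so nothing is required; on $\intt(D)$, using $f=g$ on $D$ from \eqref{sys5} and the smoothness of $g$ near the diagonal that makes \eqref{sysplus} meaningful, the same computation identifies the limit as $A^{\hat u}g(t,x,t,x)$, so the clause is equivalent there to \eqref{sysplus}; on $\partial C$, where \eqref{fcond} gives no regularity up to the boundary, no further reduction is possible and the clause is exactly \eqref{sys3}. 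Finally, by the second identity above and a localized It$\hato$ expansion along $X^{\bu}$ (relying on the Lipschitz, right-continuity and boundedness properties of $\Theta^{\bu},\Lambda^{\bu}$ from Assumption \ref{assumption1}), clause \eqref{eqcon3} is equivalent to $A^{\bu}f(t,x,t,x)\le0$ for all $(t,x)\in C$ and $\bu\in\bU$; since the constant control $\bu=\hat u(t,x)$ lies in $\bU$ by Assumption \ref{assumption1} and, the coefficients of the two operators agreeing at the diagonal point, $A^{\bu}f(t,x,t,x)=A^{\hat u}f(t,x,t,x)=0$ by \eqref{sys1}, this is in turn equivalent to $\sup_{\bu\in\bU}A^{\bu}f(t,x,t,x)=0$, i.e.\ \eqref{sys2}, with the supremum attained at $\hat u(t,x)$; the representation \eqref{equiuhat} is then immediate wherever the argmax map is single-valued. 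Collecting these equivalences (and recalling that \eqref{sys1}, \eqref{sys4}, \eqref{sys5} are automatic) gives the claimed if-and-only-if.

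The formal content of all the above is elementary It$\hato$ calculus; the genuine difficulty, and the part I expect to be the main obstacle, is the rigorous justification of the limit interchanges. Three points require care: (i) $f(s,y,\cdot,\cdot)$ is only $C^{1,2}$ on the open set $C$, so each It$\hato$/Dynkin step must be localized by exit times from balls $\subset\subset C$ and the localization then removed via a uniform-integrability estimate built from the $L^{\infty}_{\rm poly}$ growth of $f$ and polynomial moment bounds for $X^{\hat u}$ and $X^{\bu}$; (ii) the $o(\epsilon)$ control of the early-exit events in the control perturbation, which hinges on $(t,x)$ being interior to $C$ so that the relevant hitting times have super-polynomially small probability on $[t,t+\epsilon]$; and (iii) passing to the limit $\epsilon\downarrow0$ with only right-continuity in $t$ of the coefficients. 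These are precisely the cut-off and localization arguments announced in the introduction, and they constitute the technical heart of the proof.
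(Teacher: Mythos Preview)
Your proposal is correct and follows essentially the same route as the paper: the paper packages the two strong-Markov identities and the $o(\epsilon)$ early-exit estimates into Lemmas \ref{lm1}--\ref{lm2}, records that \eqref{sys1}, \eqref{sys4}, \eqref{sys5} are automatic in Lemma \ref{lm3}, and then performs exactly the clause-by-clause matching you describe (with equality in \eqref{sys2} obtained by evaluating \eqref{sys1} on the diagonal, which is your $\bu=\hat u(t,x)$ observation). Your list of technical obstacles (i)--(iii) is precisely what the paper handles in the appendix via cut-off functions $\chi\in C_c^\infty$, polynomial moment bounds of the form \eqref{littleep2}, and the small-ball exit estimates $\mP^{t,x}(\sup_{t\le s\le t+\epsilon}\|X_s^{\bu}-x\|\ge\delta)=o(\epsilon)$.
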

Theorem \ref{mainthm} builds upon the following several lemmas, which calculate the limits (\ref{eqcon2}) and (\ref{eqcon3}). Their proofs are given in Appendix \ref{proofsec3}.

\begin{lemma}
\label{lm1}
For any $(t,x)\in E\backslash \partial C$,
\[
\limsup_{\epsilon \to 0}\frac{J(t,x;\hat{u},\tau_{(\hat{u},C,t+\epsilon)})-f(t,x,t,x)}{\epsilon}
=A^{\hat{u}}f(t,x,t,x).
\]
\end{lemma}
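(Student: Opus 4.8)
The plan is to collapse the numerator onto the single function $f$ and then recognize the resulting expression as a Dynkin-type limit. Using the identity $\tau_{(\hat u,C,t+\epsilon)}=\epsilon+\tau_{(\hat u,C,t)}\circ\theta_\epsilon$ recalled before the theorem, conditioning at time $t+\epsilon$ and invoking the strong Markov property of the space--time process $r\mapsto(r,X^{\hat u}_r)$, together with the observation that in $f(s,y,\cdot,\cdot)$ the pair $(s,y)$ enters only as a frozen parameter of $g$, I would obtain
\[
J(t,x;\hat u,\tau_{(\hat u,C,t+\epsilon)})=\mE^{t,x}\big[f(t,x,t+\epsilon,X_{t+\epsilon})\big],
\]
while $J(t,x;\hat u,\hat\tau)=f(t,x,t,x)$ is immediate from the definition of $f$. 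Hence the quotient in the statement equals $\tfrac1\epsilon\mE^{t,x}[f(t,x,t+\epsilon,X_{t+\epsilon})-f(t,x,t,x)]$, which is precisely the boundary expression (\ref{sys3}) evaluated now at a point \emph{off} $\partial C$. A point to verify is that $f(t,x,\cdot,\cdot)$ --- continuous on $E$ with polynomial growth by (\ref{fcond}) --- is regular enough for the conditioning to be legitimate.

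Next I would localize around $(t,x)$. Since $(t,x)\in E\setminus\partial C$ it belongs to one of the relatively open sets $C$ or $\intt(D)$, and on a relatively compact neighbourhood $\bar B$ of $(t,x)$ sitting inside that set the function $f(t,x,\cdot,\cdot)$ is $C^{1,2}$ with bounded derivatives: by (\ref{fcond}) if $(t,x)\in C$, and because $f(t,x,\cdot,\cdot)$ agrees with $g(t,x,\cdot,\cdot)$ on $D$ (which is $C^{1,2}$, as implicitly needed for the statement to make sense on $\intt(D)$) if $(t,x)\in\intt(D)$. Let $\rho\triangleq\inf\{r\ge t:(r,X_r)\notin B\}$. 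Applying It\^{o}'s formula to $f(t,x,\cdot,X_\cdot)$ on $[t,(t+\epsilon)\wedge\rho]$ and taking $\mE^{t,x}$ --- the stochastic integral is a true martingale because $f_x$ and $\Lambda^{\hat u}$ are bounded on $\bar B$ --- gives
\[
\mE^{t,x}\big[f(t,x,(t+\epsilon)\wedge\rho,X_{(t+\epsilon)\wedge\rho})\big]-f(t,x,t,x)=\mE^{t,x}\!\int_t^{(t+\epsilon)\wedge\rho}A^{\hat u}f(t,x,r,X_r)\,\md r.
\]

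Then I would let $\epsilon\to0$. Dividing the displayed identity by $\epsilon$ and writing the integral as $\int_t^{t+\epsilon}A^{\hat u}f(t,x,r,X_r)\ind_{\{r<\rho\}}\,\md r$, the integrand is bounded on $\bar B$, tends to $A^{\hat u}f(t,x,t,x)$ almost surely as $r\downarrow t$ by the right-continuity in time of $\Theta^{\hat u},\Lambda^{\hat u}$ (Assumption \ref{assumption1}(3)) and the spatial continuity of $f,f_x,f_{xx}$ together with $X_r\to x$, and $\ind_{\{r<\rho\}}\to1$; bounded convergence then yields the limit $A^{\hat u}f(t,x,t,x)$. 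It remains to check that replacing $(t+\epsilon)\wedge\rho$ by $t+\epsilon$ costs only $o(\epsilon)$: by Cauchy--Schwarz the discrepancy is bounded by $\mP^{t,x}(\rho<t+\epsilon)^{1/2}$ times an $L^2$-norm that stays bounded as $\epsilon\to0$, using the polynomial-growth bound in (\ref{fcond}) and the finiteness of all moments of $\sup_{s\le T}\|X_s\|$ under Assumption \ref{assumption1}, while a standard maximal estimate for the diffusion with locally bounded coefficients gives $\mP^{t,x}(\rho<t+\epsilon)=o(\epsilon^2)$. Combining, the limit --- hence the $\limsup$ --- equals $A^{\hat u}f(t,x,t,x)$.

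The step I expect to be delicate is the last one, specifically the uniform tail control on $\{\rho<t+\epsilon\}$: one must turn the polynomial-growth part of (\ref{fcond}) into an $L^p$-bound on $f(t,x,t+\epsilon,X_{t+\epsilon})$ uniform for small $\epsilon$, and the maximal inequality must beat the first power of $\epsilon$, which forces moments of order strictly above $2$ (or the exponential bounds available since the coefficients are bounded near $(t,x)$). A secondary point, easily missed, is the identification $A^{\hat u}f(t,x,t,x)=A^{\hat u}g(t,x,t,x)$ for $(t,x)\in\intt(D)$, which is exactly the quantity later constrained by (\ref{sysplus}) in Theorem \ref{mainthm}.
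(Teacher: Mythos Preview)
Your proof is correct and follows the same overall architecture as the paper's: first reduce via the strong Markov property to $\tfrac1\epsilon\mE^{t,x}[f(t,x,t+\epsilon,X_{t+\epsilon})-f(t,x,t,x)]$, then localize near $(t,x)$, identify the leading term as $A^{\hat u}f(t,x,t,x)$, and show the localization error is $o(\epsilon)$.

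The one genuine methodological difference is the localization device. You localize by a \emph{stopping time} $\rho$ (first exit from a neighbourhood $B$), apply It\^o on $[t,(t+\epsilon)\wedge\rho]$, and then control the event $\{\rho<t+\epsilon\}$ via a maximal inequality $\mP^{t,x}(\sup_{t\le s\le t+\epsilon}\|X_s-x\|\ge\delta)=O(\epsilon^{\gamma'})$ with $\gamma'>2$. The paper instead localizes by a \emph{multiplicative cut-off}: it sets $f^0=f\chi$ with $\chi\in C^\infty_c(B_{t,x}(\delta))$, $\chi\equiv1$ on $B_{t,x}(\delta/2)$, so that $f^0\in C^2_c(E)$ lies in the domain of the generator and $\mE^{t,x}[f^0(t+\epsilon,X_{t+\epsilon})-f^0(t,x)]=\epsilon A^{\hat u}f^0(t,x)+o(\epsilon)$ directly. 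Its remainder is then $\mE^{t,x}|f-f^0|(t+\epsilon,X_{t+\epsilon})$, which is supported on $\{\|X_{t+\epsilon}-x\|\ge\delta/2\}$ and is controlled using only the \emph{single-time} moment estimate $\mE^{t,x}\|X_{t+\epsilon}-x\|^{2\beta}\le C\epsilon^\beta$ together with H\"older and the polynomial growth of $f$. What each buys: the paper's cut-off avoids the maximal inequality altogether (only the endpoint $X_{t+\epsilon}$ matters), so the tail step is slightly lighter; your stopping-time route keeps It\^o's formula explicit and makes the limiting argument for the drift integral transparent. Both are standard and either is acceptable here.
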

\begin{lemma}
\label{lm2}
For any $(t,x)\in C$, $\bu \in \bU$,
\[
\limsup_{\epsilon \to 0}\frac{J(t,x;\hat{u}_{(t,\epsilon,\bu)},\tau_{(\hat{u}_{(t,\epsilon,\bu)},C,t)})-f(t,x,t,x)}{\epsilon}=
A^{\bu}f(t,x,t,x).
\]
\end{lemma}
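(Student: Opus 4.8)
\emph{Proof plan.} The plan is to compute the limit directly; it will turn out to exist and equal $A^{\bu}f(t,x,t,x)$, so the $\limsup$ equals it too. Fix $(t,x)\in C$ and $\bu\in\bU$, write $u^{\epsilon}\triangleq\hat{u}_{(t,\epsilon,\bu)}$, let $X^{\epsilon}$ be the solution of (\ref{dynamic}) under $u^{\epsilon}$ with $X^{\epsilon}_{t}=x$ (well posed since $\bu\in\bU\subset\Ui$ by Assumption \ref{assumption1}(1)), and set $\tau^{\epsilon}\triangleq\tau_{(u^{\epsilon},C,t)}$, $\rho_{\epsilon}\triangleq\tau^{\epsilon}\wedge(t+\epsilon)$, $\Omega_{\epsilon}\triangleq\{\tau^{\epsilon}\ge t+\epsilon\}$. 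Because $u^{\epsilon}\equiv\bu$ on $[t,t+\epsilon)$, on that interval $X^{\epsilon}$ coincides pathwise with $X^{\bu}$, the $\bu$-controlled process from $(t,x)$. The first step is the identity
\[
J(t,x;u^{\epsilon},\tau^{\epsilon})=\mE^{t,x}f(t,x,\rho_{\epsilon},X^{\bu}_{\rho_{\epsilon}}).
\]
On $\Omega_{\epsilon}$, after time $t+\epsilon$ the process $X^{\epsilon}$ is an $\hat{u}$-diffusion restarted at $(t+\epsilon,X^{\bu}_{t+\epsilon})$ and $\tau^{\epsilon}$ coincides with $\tau_{(\hat{u},C,t+\epsilon)}$ of that restarted process, so the strong Markov property gives $\mE^{t,x}[\,g(t,x,\tau^{\epsilon},X^{\epsilon}_{\tau^{\epsilon}})\mid\F_{t+\epsilon}\,]=f(t,x,t+\epsilon,X^{\bu}_{t+\epsilon})=f(t,x,\rho_{\epsilon},X^{\bu}_{\rho_{\epsilon}})$ there; on $\Omega_{\epsilon}^{c}=\{\tau^{\epsilon}<t+\epsilon\}$ one has $\rho_{\epsilon}=\tau^{\epsilon}$, $X^{\epsilon}_{\tau^{\epsilon}}=X^{\bu}_{\rho_{\epsilon}}$, and the exit point $(\rho_{\epsilon},X^{\bu}_{\rho_{\epsilon}})$ lies on $\partial C\subset D\triangleq E\setminus C$, where $f(s,y,\cdot,\cdot)=g(s,y,\cdot,\cdot)$ holds automatically from the definition of $f$ (the implemented stopping time issued from a point of $D$ equals that point's time coordinate). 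Averaging over $\Omega_{\epsilon}$ and $\Omega_{\epsilon}^{c}$ yields the identity.

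Next I would localize inside $C$. Since $C$ is open and $(t,x)\in C$, fix $\delta>0$ with $K_{\delta}\triangleq\{(r,y):t\le r\le t+\delta,\ |y-x|\le\delta\}\subset C$, and set $\sigma_{\delta}\triangleq\inf\{r\ge t:|X^{\bu}_{r}-x|\ge\delta\}$. Standard estimates for (\ref{dynamic}) under Assumption \ref{assumption1}(2)(3) give $\mP(\sigma_{\delta}\le t+\epsilon)\le C_{\delta}\exp(-c_{\delta}/\epsilon)=o(\epsilon^{2})$ and $\mE^{t,x}\sup_{t\le r\le t+1}|X^{\bu}_{r}|^{p}<\infty$ for every $p$. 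For $\epsilon\le\delta$, on $\{\sigma_{\delta}>t+\epsilon\}$ the path lies in $K_{\delta}\subset C$, hence $\tau^{\epsilon}>t+\epsilon$, $\rho_{\epsilon}=t+\epsilon$, and $f(t,x,\rho_{\epsilon},X^{\bu}_{\rho_{\epsilon}})=f(t,x,(t+\epsilon)\wedge\sigma_{\delta},X^{\bu}_{(t+\epsilon)\wedge\sigma_{\delta}})$ there. Applying It$\hato$'s formula to $r\mapsto f(t,x,r,X^{\bu}_{r})$ on $[t,(t+\epsilon)\wedge\sigma_{\delta}]$, along which the path stays in the compact set $K_{\delta}\subset C$ where $f(t,x,\cdot,\cdot)$ together with the derivatives entering $A^{\bu}$ are bounded by (\ref{fcond}), the stochastic integral is a true martingale and
\[
\mE^{t,x}f(t,x,(t+\epsilon)\wedge\sigma_{\delta},X^{\bu}_{(t+\epsilon)\wedge\sigma_{\delta}})-f(t,x,t,x)=\mE^{t,x}\!\int_{t}^{(t+\epsilon)\wedge\sigma_{\delta}}\!A^{\bu}f(t,x,r,X^{\bu}_{r})\,\md r.
\]
Dividing by $\epsilon$, using right continuity in $r$ of $\Theta^{\bu},\Lambda^{\bu}$ (Assumption \ref{assumption1}(3)) and continuity of the derivatives of $f$, the integrand is bounded on $K_{\delta}$ and $A^{\bu}f(t,x,r,X^{\bu}_{r})\to A^{\bu}f(t,x,t,x)$ a.s.\ as $r\downarrow t$; dominated convergence on $\{\sigma_{\delta}>t+\epsilon\}$ together with the $o(\epsilon)$ bound for the part on $\{\sigma_{\delta}\le t+\epsilon\}$ shows the left side divided by $\epsilon$ tends to $A^{\bu}f(t,x,t,x)$.

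It then remains to check that $\epsilon^{-1}\bigl|\mE^{t,x}[\,f(t,x,\rho_{\epsilon},X^{\bu}_{\rho_{\epsilon}})-f(t,x,(t+\epsilon)\wedge\sigma_{\delta},X^{\bu}_{(t+\epsilon)\wedge\sigma_{\delta}})\,]\bigr|\to0$. The two arguments coincide on $\{\sigma_{\delta}>t+\epsilon\}$, so the expectation is carried by $\{\sigma_{\delta}\le t+\epsilon\}$; there $|f(t,x,(t+\epsilon)\wedge\sigma_{\delta},X^{\bu}_{(t+\epsilon)\wedge\sigma_{\delta}})|\le\sup_{K_{\delta}}|f(t,x,\cdot,\cdot)|$, while $|f(t,x,\rho_{\epsilon},X^{\bu}_{\rho_{\epsilon}})|\le M(1+|X^{\bu}_{\rho_{\epsilon}}|)^{\gamma}\le M(1+\sup_{t\le r\le t+1}|X^{\bu}_{r}|)^{\gamma}$ by the $L^{\infty}_{\rm poly}$ clause of (\ref{fcond}). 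Hence the whole integrand is dominated by a single $Z\in\bigcap_{p\ge1}L^{p}$, uniformly in $\epsilon\le1$, and Cauchy--Schwarz gives $\bigl|\mE^{t,x}[\cdots]\bigr|\le\|Z\|_{L^{2}}\,\mP(\sigma_{\delta}\le t+\epsilon)^{1/2}=o(\epsilon)$. Combining the three displays gives $\lim_{\epsilon\to0}\epsilon^{-1}(J(t,x;u^{\epsilon},\tau^{\epsilon})-f(t,x,t,x))=A^{\bu}f(t,x,t,x)$. The step I expect to be the main obstacle is exactly this last one: the perturbed trajectory can leave $C$ during $[t,t+\epsilon)$ with positive probability for each fixed $\epsilon$, and to make that contribution $o(\epsilon)$ rather than just $o(1)$ one needs both the super-polynomial decay of $\mP(\sigma_{\delta}\le t+\epsilon)$ and the uniform $L^{p}$ ($p>1$) bounds on the payoff values, which rely on the growth clause of (\ref{fcond}) and the moment estimates behind Assumption \ref{assumption1}; a secondary technicality is the Markov/tower decomposition of the first step and the martingale property of the It$\hato$ integral, both handled by localizing to $K_{\delta}\subset C$. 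Lemma \ref{lm1} is proved the same way, with the perturbation of the stopping time replacing the perturbation of the control.
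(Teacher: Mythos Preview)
Your proof is correct and follows essentially the same strategy as the paper's: reduce $J$ to an expectation of $f$ via the strong Markov property, localize inside $C$, and show the early-exit contribution is $o(\epsilon)$ by combining exit-probability estimates with the polynomial growth of $f$ and moment bounds on $X^{\bu}$. The implementation differs in two minor ways---you unify the paper's split into $(I)+(II)$ through the single identity $J=\mE^{t,x}f(t,x,\rho_\epsilon,X^{\bu}_{\rho_\epsilon})$ (using $f=g$ on $D$) and you localize by the stopping time $\sigma_\delta$ and apply It\^o directly, whereas the paper localizes with a smooth cut-off $\chi$ and invokes the generator limit; also, your exponential bound $\mP(\sigma_\delta\le t+\epsilon)\le C_\delta e^{-c_\delta/\epsilon}$ is stronger than what the paper derives (only polynomial decay of arbitrary order from the moment estimates (\ref{littleep2}) and \citet*{Huang2019}, Lemma~3.1), but either suffices since you only need $o(\epsilon^2)$.
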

\begin{lemma}
\label{lm3}
For any $(\hat{u},C)\in \Ui\times \Oi$, (\ref{sys1}), (\ref{sys4}) and (\ref{sys5}) hold.
\end{lemma}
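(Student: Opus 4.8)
\textbf{Proof proposal for Lemma \ref{lm3}.}

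The plan is to establish the three identities separately; all three are consequences of the definition of the axillary function $f$ together with the strong Markov property of $X^{\hat u}$ and the regularity hypothesis \eqref{fcond}.

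First I would prove \eqref{sys4} and \eqref{sys5}, which are essentially boundary conditions requiring no analysis. Recall that $\hat\tau = \tau_{(\hat u, C, t)} = \inf\{t\le r \le T: (r, X^{\hat u}_r)\notin C\}$. If the starting point satisfies $(t,x)\in D = E\setminus C$, then the state is already outside the continuation region at the initial time, so $\hat\tau = t$ and $X^{\hat u}_{\hat\tau} = x$ $\mP^{t,x}$-a.s.; hence $f(s,y,t,x) = \mE^{t,x} g(s,y,\hat\tau, X^{\hat u}_{\hat\tau}) = g(s,y,t,x)$, which is \eqref{sys5}. Similarly, if the initial time is the terminal time $T$, then necessarily $\hat\tau = T$ (the agent cannot continue past $T$) and $X^{\hat u}_{\hat\tau} = x$, giving $f(s,y,T,x) = g(s,y,T,x)$, which is \eqref{sys4}. (Strictly, one should note $T$ is not in $E$; the statement \eqref{sys4} is the natural continuous extension of $f$ up to $t=T$, and \eqref{fcond} guarantees $f(s,y,\cdot,\cdot)\in C^0(E)$ so this extension is unambiguous as a limit $t\uparrow T$.)

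The substantive part is \eqref{sys1}: $A^{\hat u} f(s,y,t,x) = 0$ for all $(t,x)\in C$, $(s,y)\in E$. Here I would fix $(s,y)$ and regard $\phi(t,x)\triangleq f(s,y,t,x)$ as a function on $E$; by \eqref{fcond}, $\phi\in C^{1,2}(C)$, so $A^{\hat u}\phi$ is well-defined and continuous on $C$. The key observation is that $\phi$ is \emph{harmonic} for the operator $A^{\hat u}$ in $C$ because it is a ``value function with frozen reward'': for $(t,x)\in C$, by the strong Markov property, for any stopping time $\rho$ with $t\le \rho\le \hat\tau$,
\[
\phi(t,x) = \mE^{t,x}\bigl[ g(s,y,\hat\tau,X^{\hat u}_{\hat\tau})\bigr] = \mE^{t,x}\bigl[ \mE^{\rho,X^{\hat u}_{\rho}} g(s,y,\hat\tau,X^{\hat u}_{\hat\tau})\bigr] = \mE^{t,x}\bigl[\phi(\rho,X^{\hat u}_\rho)\bigr],
\]
i.e., $\{\phi(r\wedge\hat\tau, X^{\hat u}_{r\wedge\hat\tau})\}$ is a $\mP^{t,x}$-martingale. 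Then I would take $\rho = \rho_\delta\triangleq \hat\tau \wedge \inf\{r\ge t: (r,X^{\hat u}_r)\notin B_\delta(t,x)\}$ for small $\delta>0$ so that the closed ball $\overline{B_\delta(t,x)}\subset C$; apply Dynkin's formula (Itô's formula on the $C^{1,2}$ domain $B_\delta(t,x)$, where $A^{\hat u}\phi$ is bounded), obtaining $0 = \mE^{t,x}[\phi(\rho_\delta, X^{\hat u}_{\rho_\delta})] - \phi(t,x) = \mE^{t,x}\int_t^{\rho_\delta} A^{\hat u}\phi(r, X^{\hat u}_r)\,\md r$; divide by $\mE^{t,x}[\rho_\delta - t]$ and let $\delta\to 0$, using continuity of $A^{\hat u}\phi$ at $(t,x)$ and the fact that $\mE^{t,x}[\rho_\delta - t]>0$ with $(r, X^{\hat u}_r)\to (t,x)$ on $[t,\rho_\delta]$ as $\delta\to 0$, to conclude $A^{\hat u}\phi(t,x) = 0$.

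The main obstacle is the rigorous justification of the martingale property and the Dynkin step: one needs the family $\{g(s,y,\hat\tau,X^{\hat u}_{\hat\tau})\}$ to be $\mP^{t,x}$-integrable and the conditioning identity $\mE^{t,x}[g(s,y,\hat\tau,X^{\hat u}_{\hat\tau})\mid \F_\rho] = \phi(\rho, X^{\hat u}_\rho)$ to hold with no measure-theoretic pathology — this is where the strong Markov property of the strong solution (guaranteed by Assumption \ref{assumption1}) and the growth control from \eqref{fcond} (the $L^\infty_{\rm poly}$ membership, combined with moment bounds for $X^{\hat u}$ under the Lipschitz/linear-growth coefficients) are essential, and it is plausible that one must invoke cut-off/localization exactly as the authors announce in the introduction to handle the stopping time $\hat\tau$ which may be unbounded in the $x$-direction. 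Since the localization argument $\rho_\delta$ above keeps everything inside a fixed compact subset of $C$, the delicate part is confined to the very first equality (the martingale identity at the level of $\hat\tau$), after which all estimates are on a bounded domain and standard. I would therefore structure the write-up so that the only ``heavy'' lemma invoked is integrability of $g(s,y,\hat\tau, X^{\hat u}_{\hat\tau})$ under each $\mP^{t,x}$, everything else following from classical stochastic calculus.
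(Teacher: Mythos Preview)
Your treatment of \eqref{sys4} and \eqref{sys5} matches the paper's: both dismiss these as immediate from the definition of $\hat\tau$. For \eqref{sys1}, however, you take a genuinely different route. The paper does not establish the martingale property of $\{\phi(r\wedge\hat\tau, X_{r\wedge\hat\tau})\}$ and then invoke Dynkin's formula; instead it recycles the machinery already built in Lemmas~\ref{lm1} and~\ref{lm2}. Concretely, the paper uses the cut-off argument of Lemma~\ref{lm1} to write
\[
\epsilon\, A^{\hat u}f(s,y,t,x) \;=\; \mE^{t,x}\bigl[f(s,y,t+\epsilon,X_{t+\epsilon})-f(s,y,t,x)\bigr]+o(\epsilon),
\]
rewrites the bracket via the Markov property as $\mE^{t,x}\bigl[g(s,y,\tau_{(\hat u,C,t+\epsilon)},X_{\tau_{(\hat u,C,t+\epsilon)}})-g(s,y,\hat\tau,X_{\hat\tau})\bigr]$, and then observes that on $\{\hat\tau>t+\epsilon\}$ the two stopping times coincide, so this difference is supported on an event of probability $o(\epsilon)$ and (by the moment/exit estimates from Lemma~\ref{lm2}) is itself $o(\epsilon)$. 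Dividing by $\epsilon$ gives $A^{\hat u}f=0$.

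Your Dynkin approach is correct and arguably more transparent for \eqref{sys1} in isolation; it is the textbook way to show that a hitting-time expectation is harmonic for the generator. The paper's approach buys uniformity: the same deterministic-$\epsilon$-perturbation-plus-cut-off template drives Lemmas~\ref{lm1}--\ref{lm3}, so no new ingredients are introduced here. One minor remark on your write-up: the tower identity is needed only at the localized time $\rho_\delta$, not at $\hat\tau$, so the integrability burden is lighter than you suggest---$\phi$ is bounded on $\overline{B_\delta(t,x)}$, and the only global input is $g(s,y,\hat\tau,X_{\hat\tau})\in L^1(\mP^{t,x})$, which is what justifies the strong-Markov conditioning step and follows from the polynomial growth in \eqref{fcond} together with the moment bounds on $X$ under Assumption~\ref{assumption1}.
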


\begin{proof}[Proof of Theorem \ref{mainthm}]
  (\ref{sys1}), (\ref{sys4}) and (\ref{sys5}) hold no matter whether $(\hat{u},C)$ is equilibrium or not, thanks to Lemma \ref{lm3}. Clearly, by Lemmas \ref{lm1} and \ref{lm2}, (\ref{eqcon1}) is equivalent to (\ref{sys6}), (\ref{eqcon3}) is equivalent to (\ref{sys2}) with $\leq$, (\ref{eqcon2}) in $\intt(D)$ is equivalent to (\ref{sysplus}), (\ref{eqcon2}) on $\partial C$ is equivalent to (\ref{sys3}). Moreover, taking $(s,y)=(t,x)$ in (\ref{sys1}), (\ref{sys2}) with $\leq$ can be replaced by the one with $=$. (\ref{equiuhat}) is obvious from (\ref{sys1}) and (\ref{sys2}).
\end{proof}
Note that Theorem \ref{mainthm} provides full characterization of equilibrium if we know {\it a priori} the form of auxiliary function. However, using Theorem \ref{representation} below, we know that if $f$ solves some part of the system mentioned above, it is exactly the auxiliary function needed.
\begin{theorem}
  \label{representation}
  If $f$ satisfies (\ref{fcond}), and solves (\ref{sys1}), (\ref{sys4}) and (\ref{sys5}) for some $(\hat{u},C)\in \Ui\times \Oi$, then $f(s,y,t,x)=\mE^{t,x}g(s,y,\hat{\tau},X^{\hat{u}}_{\hat{\tau}})$.
\end{theorem}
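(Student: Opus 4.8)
\noindent\emph{Proof proposal.} The plan is to run a localized Feynman--Kac / Dynkin argument, as in the classical identification of the solution of a linear Dirichlet problem with its stochastic representation; the only features beyond the classical picture are the free boundary $\partial C$, the finite horizon $T$, and the fact that $f$ is controlled only polynomially. Fix $(s,y)\in E$ and abbreviate $h(\cdot,\cdot)\triangleq f(s,y,\cdot,\cdot)$. By (\ref{fcond}) we have $h\in C^{1,2}(C)\cap C^0(E)\cap L^{\infty}_{\rm poly}$; by (\ref{sys1}), $A^{\hat u}h\equiv 0$ on $C$; and by (\ref{sys4})--(\ref{sys5}), $h=g(s,y,\cdot,\cdot)$ on $D\cup(\{T\}\times\mX)$. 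Fix $(t,x)\in E$, work under $\mP^{t,x}$, and recall $\hat\tau=\inf\{r\ge t:(r,X^{\hat u}_r)\notin C\}$. Since $X^{\hat u}$ does not explode on $[0,T]$ (Assumption \ref{assumption1}(4)) and $E=[0,T)\times\mX$, we have $\hat\tau\le T$ a.s.; by continuity of $r\mapsto(r,X^{\hat u}_r)$ the stopped point $(\hat\tau,X^{\hat u}_{\hat\tau})$ lies in $(\partial C\cap E)\cup(\{T\}\times\mX)\subset D\cup(\{T\}\times\mX)$, so $h(\hat\tau,X^{\hat u}_{\hat\tau})=g(s,y,\hat\tau,X^{\hat u}_{\hat\tau})$ a.s.; by polynomial growth this variable is dominated by $M(1+\sup_{0\le r\le T}\|X^{\hat u}_r\|)^{\gamma}$, hence integrable, so the right-hand side of the claimed identity is well-defined.

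Next I would introduce a localizing sequence of stopping times $\tau_n\uparrow\hat\tau$ with the property that, on $[t,\tau_n]$, the process $(r,X^{\hat u}_r)$ stays in a compact subset $K_n$ of $C$ on which $h$ is $C^{1,2}$ (e.g.\ $\tau_n=\inf\{r\ge t:\ {\rm dist}((r,X^{\hat u}_r),D)\le 1/n\ \text{or}\ \|X^{\hat u}_r\|\ge n\}\wedge(T-\tfrac1n)$, verifying $\tau_n\uparrow\hat\tau$ on $\{\hat\tau<T\}$ and on $\{\hat\tau=T\}$ separately). On $K_n$ the coefficients $\Theta^{\hat u},\Lambda^{\hat u}$ are bounded (Assumption \ref{assumption1}(2)--(3)) and $h,h_t,h_x,h_{xx}$ are bounded, so It$\hato$'s formula applies on $[t,\tau_n]$, the local martingale $\int_t^{\cdot\wedge\tau_n}h_x(r,X^{\hat u}_r)^{\mt}\Lambda^{\hat u}(r,X^{\hat u}_r)\,\md W_r$ is a true martingale, and $A^{\hat u}h=0$ on $C\supset K_n$ kills the finite-variation part, yielding
\[
h(t,x)=\mE^{t,x}\!\big[h(\tau_n,X^{\hat u}_{\tau_n})\big],\qquad n\in\mN.
\]
Letting $n\to\infty$, path continuity gives $(\tau_n,X^{\hat u}_{\tau_n})\to(\hat\tau,X^{\hat u}_{\hat\tau})$ a.s., and $h\in C^0(E)$ together with the boundary identifications above (and, on $\{\hat\tau=T\}$, the continuity of $h$ up to the slice $\{T\}\times\mX$ with limit $g(s,y,T,\cdot)$ prescribed by (\ref{sys4})) gives $h(\tau_n,X^{\hat u}_{\tau_n})\to g(s,y,\hat\tau,X^{\hat u}_{\hat\tau})$ a.s. Since $|h(r,z)|\le M(1+\|z\|)^{\gamma}$ and $\mE^{t,x}[\sup_{0\le r\le T}\|X^{\hat u}_r\|^{p}]<\infty$ for every $p>0$ (standard SDE moment bounds under Assumption \ref{assumption1}(2)--(3), via Gr\"onwall and Burkholder--Davis--Gundy), the family $\{h(\tau_n,X^{\hat u}_{\tau_n})\}_{n}$ is bounded in $L^{2}(\mP^{t,x})$, hence uniformly integrable, and therefore
\[
f(s,y,t,x)=h(t,x)=\lim_{n\to\infty}\mE^{t,x}\!\big[h(\tau_n,X^{\hat u}_{\tau_n})\big]=\mE^{t,x}\!\big[g(s,y,\hat\tau,X^{\hat u}_{\hat\tau})\big],
\]
which is the assertion.

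The main obstacle is the passage to the limit, in two respects: (i) constructing the localizing sequence and checking $\tau_n\uparrow\hat\tau$ across the $C^2$ free boundary $\partial C$ and up to time $T$; and (ii) the uniform-integrability step, where the polynomial-growth clause of (\ref{fcond}) combines with the uniform-in-$t$ Lipschitz and boundedness conditions of Assumption \ref{assumption1} to control the moments of $X^{\hat u}$ --- the true-martingale property of the stochastic integral on $[t,\tau_n]$ resting on the same estimates. A secondary point is the behaviour at $t=T$: one must ensure the $C^0(E)$ regularity from (\ref{fcond}) (or mild continuity of $g$) propagates to one-sided continuity at the terminal slice, so that $h(\tau_n,X^{\hat u}_{\tau_n})\to g(s,y,T,X^{\hat u}_T)$ on $\{\hat\tau=T\}$ --- the only genuine departure from the textbook Dirichlet-problem representation.
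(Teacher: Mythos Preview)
Your proposal is correct and follows the same route as the paper: localize via stopping times keeping $(r,X^{\hat u}_r)$ inside a region where $f(s,y,\cdot,\cdot)$ is $C^{1,2}$, apply It\^o's formula (with (\ref{sys1}) killing the drift and the localization making the stochastic integral a true martingale), then pass to the limit using the polynomial growth of $f$ together with SDE moment bounds. The paper's localizing sequence is $\tau_n=((\hat\tau-1/n)\vee 0)\wedge\inf\{s\ge t:\|\Lambda^{\hat u}(r,X_r)\nabla_x f(s,y,r,X_r)\|\ge n\}$ followed by a brief dominated-convergence step, so your version is in fact the more carefully argued of the two (and you rightly flag the one-sided continuity at the terminal slice, which the paper leaves implicit).
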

\begin{proof}
  We only need to consider $(t,x)\in C$. For $n\geq 1$, define the stopping time { $\tau_n=\inf\{r\geq t: \|\sigma(r,X_r,\hat{u}(r,X_r))\nabla_x f(r,X_r)\|\geq n ,\mathrm{or\ }d((r,X_r),\partial C)<1/n\}\wedge T$}. Using It$\hato$'s formula, we have
  \[
  \mE^{t,x}f(s,y,\tau_n,X_{\tau_n})=f(s,y,t,x).
  \]
 Based on $f(s,y,\cdot,\cdot)\in L^{\infty}_{\rm poly}$ and dominated convergence theorem, letting $n\to \infty$, we get
 \begin{align*}
 f(s,y,t,x)&=\mE^{t,x}f(s,y,\hat{\tau},X_{\hat{\tau}})\\
           &=\mE^{t,x}f(s,y,\hat{\tau},X_{\hat{\tau}})I_{\{\hat{\tau}<T\}}+\mE^{t,x}f(s,y,T,X_T)I_{\{\hat{\tau}=T\}}\\
           &=\mE^{t,x}g(s,y,\hat{\tau},X^{\hat{u}}_{\hat{\tau}}).
 \end{align*}
 Here we have used (\ref{sys4}), (\ref{sys5}) and the fact $(\hat{\tau},X_{\hat{\tau}})\in D$, for $\mP^{t,x}$-almost surely $\omega \in \{ \hat{\tau}<T  \}$.
\end{proof}
In order to provide refined characterization of the equilibrium, and prepare for the verification procedure in Section \ref{exm} at the same time, we discuss the boundary condition (\ref{sys3}).
Similar to \citet*{Christensen2020} and many other literature where free boundary problems play important roles, one usually expects {\bf smooth fitting principle} to make an ansatz. We propose smooth fitting in multi-dimensional setting as follows:
\begin{equation}\label{spatialfitting}
	f_x(t,x,\cdot,\cdot)\big|_{(t,x)}=g_x(t,x,\cdot,\cdot)\big|_{(t,x)},\forall (t,x)\in \partial C.
\end{equation}
Indeed, we prove that, under mild conditions, this is {\it necessary} for equilibrium condition (\ref{sys3}). On the other hand, it is also helpful to the verification procedure if we can obtain some {\it sufficient} condition for (\ref{sys3}). In fact, we propose:
\begin{equation}\label{boundarysc}\tag{\ref{sys3}'}
  \limsup_{\substack{(s,y)\notin \partial C\\(s,y)\to (t,x)}}A^{\hat{u}}f(t,x,s,y)\leq 0,\forall (t,x)\in \partial C.
\end{equation}
 When making ansatz, we usually aim to find a solution that has continuous spatial derivatives, especially when spatial dimension $n=1$. Therefore, combined with (\ref{boundarysc}), (\ref{spatialfitting}) is also an appropriate sufficient condition. To provide connections between (\ref{spatialfitting}) and (\ref{sys3}), we need the following mild technical assumption on $f$:
\begin{equation}\label{fcon2} \tag{H2}
  { \left.f(s,y,\cdot,\cdot) \right|_C } {\rm\ extends\ to\ some\ } \tilde{f}(s,y,\cdot,\cdot)\in C^{1,2}({ E}),\forall (s,y)\in E.
\end{equation}
{  Although $f$ is already defined on $E$, it is not $C^{1,2}$ across the boundary $\partial C$. The key point of (\ref{fcon2}) is that we consider the restriction of $f$ on $C$ (which is $C^{1,2}$) and extend it {\it smoothly} to $E$. This is needed for the application of local time formula (see Appendix \ref{proofsec3}). Moreover, (\ref{spatialfitting}) is in fact not perfectly rigorous because $f$ is not smooth across $\partial C$, but now based on (\ref{fcon2}) we see that it actually means $\tilde{f}_x=g_x$ on the boundary.} We have the following theorem, which is another main result of this paper:
\begin{theorem}
\label{boundary}
  Suppose that (\ref{fcond}) and (\ref{fcon2}) hold, then
  \begin{itemize}
    \item[(1)](\ref{boundarysc})+(\ref{spatialfitting}) $\Longrightarrow$ (\ref{sys3}).
    \item[(2)]{ If $\partial C\in C^2$ and $\Lambda(t,x)^{\mt}\Lambda(t,x)$ is invertible for any $(t,x)\in \partial C$, (\ref{sys3})$\Longrightarrow$ (\ref{spatialfitting})}.
  \end{itemize}
\end{theorem}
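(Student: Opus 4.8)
The proof of all three parts rests on a single exact identity: an expansion, obtained from Peskir's change-of-variable formula with local time on surfaces, of the numerator in (\ref{sys3}). I would proceed as follows.

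\emph{Setup and the local-time expansion.} Fix $(t_0,x_0)\in\partial C$, put $F(\cdot,\cdot):=f(t_0,x_0,\cdot,\cdot)$ and let $X=X^{\hat u}$ start at $(t_0,x_0)$. By (\ref{fcond})--(\ref{fcon2}), $F$ coincides on $\overline C$ with the $C^{1,2}(E)$ function $\tilde f(t_0,x_0,\cdot,\cdot)$; by Lemma \ref{lm3} (i.e.\ (\ref{sys5})), $F=g(t_0,x_0,\cdot,\cdot)$ on $D$; and by (\ref{sys1}), $A^{\hat u}F=0$ on $C$. Thus $F$ is $C^{1,2}$ from each side of the $C^{2}$-surface $\partial C$ and continuous across it, which is exactly the setting of Peskir's formula. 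Applying it to $r\mapsto F(r,X_r)$ on $[t_0,t_0+\epsilon]$, taking $\mE^{t_0,x_0}$ (the stochastic integral is a martingale; the occupation time of $\partial C$ is a.s. null), and using $A^{\hat u}F=0$ on $C$ together with $A^{\hat u}F=A^{\hat u}g(t_0,x_0,\cdot,\cdot)$ on $\intt D$, I obtain
\[
\mE^{t_0,x_0}\!\big[F(t_0+\epsilon,X_{t_0+\epsilon})-F(t_0,x_0)\big]
=\mE^{t_0,x_0}\!\!\int_{t_0}^{t_0+\epsilon}\!\!A^{\hat u}g(t_0,x_0,r,X_r)\ind_{\{(r,X_r)\in\intt D\}}\md r
+\tfrac12\,\mE^{t_0,x_0}\!\!\int_{t_0}^{t_0+\epsilon}\!\!\Delta(r,X_r)\,\md\ell^{\partial C}_r ,
\]
where $\ell^{\partial C}$ is the local time of $(r,X_r)$ on $\partial C$ and, for $(t,x)\in\partial C$, $\Delta(t,x):=\big\langle\nabla_{(t,x)}\big(g-\tilde f\big)(t_0,x_0,t,x),\,n(t,x)\big\rangle$ is the jump of the normal derivative of $F$ across $\partial C$.

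\emph{Identifying $\Delta$ with the smooth-fitting defect.} Since $f=g$ on $D\supset\partial C$ in the last pair, $(g-\tilde f)(s,y,\cdot,\cdot)$ vanishes on all of $\partial C$ for every $(s,y)$; hence its last-pair gradient at a boundary point is parallel to the normal there, so $\Delta(t,x)=0\iff\nabla_{(t,x)}(g-\tilde f)(t_0,x_0,t,x)=0$. If $(t_0,x_0)\in\mathring{\partial C}$ the spatial part of $n(t_0,x_0)$ is nonzero, and evaluating at $(t,x)=(t_0,x_0)$ gives $\Delta(t_0,x_0)=0\iff f_x(t_0,x_0,t_0,x_0)=g_x(t_0,x_0,t_0,x_0)\iff\partial_{e_k}f=\partial_{e_k}g$ at $(t_0,x_0)$ for some/every $k\in\mK(t_0,x_0)$, i.e.\ (\ref{smoothfit}) at $(t_0,x_0)$. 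Likewise (\ref{spatialfitting}) everywhere forces $\Delta\equiv0$ on $\partial C$, and when $\mathring{\partial C}=\partial C$ the parallelism turns (\ref{smoothfit}) into (\ref{spatialfitting}).

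\emph{Sufficiency (parts (1),(2)).} Under (\ref{spatialfitting}) the local-time term drops out, and it remains to control the first term. Given $\eta>0$, (\ref{boundarysc}) yields $\delta>0$ with $A^{\hat u}g(t_0,x_0,s,y)=A^{\hat u}f(t_0,x_0,s,y)\le\eta$ for $(s,y)\in\intt D$, $\|(s,y)-(t_0,x_0)\|<\delta$; splitting $\mE^{t_0,x_0}$ according to whether $X$ leaves the $\delta$-ball by time $t_0+\epsilon$, bounding the good event by $\eta\epsilon$ and the bad one by a cut-off/localization estimate (of the kind used for Lemma \ref{lm1}, using the polynomial growth of $A^{\hat u}g(t_0,x_0,\cdot,\cdot)$), dividing by $\epsilon$, and letting $\epsilon\to0$ then $\eta\downarrow0$ gives (\ref{sys3}). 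Part (2) reduces to (1) since, when $\mathring{\partial C}=\partial C$, (\ref{smoothfit})$\Rightarrow$(\ref{spatialfitting}).

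\emph{Necessity (part (3)) and the main obstacle.} Suppose $\Lambda^{\mt}\Lambda$ is invertible on $\mathring{\partial C}$ but (\ref{smoothfit}) fails at some $(t_0,x_0)\in\mathring{\partial C}$, so $\Delta(t_0,x_0)\ne0$. Writing $\partial C=\{\psi=0\}$ locally, $Y_r:=\psi(r,X_r)$ is a one-dimensional semimartingale started at $0$ with diffusion coefficient $n_x^{\mt}\Lambda\Lambda^{\mt}n_x>0$ at $r=t_0$ ($\mathring{\partial C}$ gives $n_x\ne0$, invertibility gives positivity); comparing with the boundary local time of a non-degenerate one-dimensional diffusion, $\mE^{t_0,x_0}[\ell^{\partial C}_{t_0+\epsilon}]\sim c\sqrt\epsilon$ with $c=c(t_0,x_0)>0$, so the local-time term is of exact order $\sqrt\epsilon$ while the first term is $O(\epsilon)$. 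Dividing by $\epsilon$, the quantity in (\ref{sys3}) behaves like $\Delta(t_0,x_0)\,c/(2\sqrt\epsilon)+O(1)$, which — for the correct orientation of $n$, determined by a sign analysis of $\Delta(t_0,x_0)$ (using that $f-g$ vanishes on $\partial C$ and is governed by $A^{\hat u}$ on $C$) — diverges to $+\infty$, contradicting (\ref{sys3}); hence (\ref{smoothfit}) holds. The hard part is exactly this last step together with the local-time expansion itself: rigorously justifying Peskir's surface formula for a process \emph{started on} $\partial C$, establishing the sharp $\sqrt\epsilon$ asymptotics of $\mE[\ell^{\partial C}_{t_0+\epsilon}]$ (the only place the non-degeneracy of $\Lambda^{\mt}\Lambda$ is used), and pinning down the sign of $\Delta(t_0,x_0)$ as well as the lower-order local-time remainder on the sufficiency side; the cut-off/localization needed because $X$ may leave a neighbourhood of $x_0$ is secondary and parallels the proof of Lemma \ref{lm1}.
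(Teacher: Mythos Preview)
Your Peskir-based skeleton is sound and overlaps with the paper for parts (2) and (3), but the paper's argument for sufficiency is structured quite differently. Rather than reading off the inequality directly from the local-time identity, the paper proceeds in two stages. First, a lemma (``boundlimsup'') shows only the \emph{boundedness} $\limsup_{\epsilon\to0}\sup_{(s,y)\in B_{t,x}(\delta)}\triangle(s,y;\epsilon)<\infty$; this is done by a direct first-order Taylor expansion under (\ref{spatialfitting}) (no Peskir at all), and by Peskir's formula with a cut-off under (\ref{smoothfit})$+\mathring{\partial C}=\partial C$. Second, the $\le0$ conclusion is extracted from boundedness by a penalization argument: one studies $\sup_{(s,y)\in B_\delta}[\triangle(s,y;\epsilon)-\delta/(d((s,y),\partial C)+\epsilon)]$, shows any maximizing sequence must stay bounded away from $\partial C$, and then invokes the uniform convergence $\triangle(s,y;\epsilon)\to A^{\hat u}f(s,y)$ on compacta of $E\setminus\partial C$ together with (\ref{boundarysc}). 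Your direct route is cleaner when it works, and your reduction of (2) to (1) via ``$\nabla(g-\tilde f)\parallel n$, hence (\ref{smoothfit})$\Rightarrow$(\ref{spatialfitting})'' is a nice shortcut the paper does not take; but the paper's Taylor argument for (1) has the advantage of handling points of $\partial C\setminus\mathring{\partial C}$ (where $n=\pm e_t$ and the graph representation $x^k=p(\cdot)$ used in Peskir's formula is unavailable) without any surface-local-time machinery.

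For part (3) the main gap in your sketch is the sign of $\Delta(t_0,x_0)$. Your proposed justification (``$f-g$ vanishes on $\partial C$ and is governed by $A^{\hat u}$ on $C$'') does not yield a sign without more. The paper fixes the sign by a direct one-sided difference-quotient argument using $f(t,x,t,x)\ge g(t,x,t,x)$ (i.e.\ (\ref{sys6})): since $f=g$ on $D$ and $f\ge g$ on $C$, one gets $\partial_{e_k}f(t,x+)\ge\partial_{e_k}f(t,x-)$ for every $k\in\mK(t,x)$, so the local-time integrand is nonnegative and strictly positive near $(t_0,x_0)$ when (\ref{smoothfit}) fails there. For the blow-up itself the paper does not prove $\mE\ell_{t_0+\epsilon}\sim c\sqrt\epsilon$; it proves the coarser but sufficient statement $\mE|Y_{t_0+\epsilon}|/\epsilon\to\infty$ for $Y=X^k-p^k$ (a short lemma using $\liminf\mE\langle Y\rangle_{t_0+\epsilon}/\epsilon>0$ and a fourth-moment bound), then converts via It\^o--Tanaka to $\mE L^{Y,0}_{t_0+\epsilon}/\epsilon\to\infty$. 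Combining this with the nonnegative jump gives $\limsup\triangle(t_0,x_0;\epsilon)=+\infty$, contradicting (\ref{sys3}).
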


\begin{remark}
\label{boundaryrmk}
  (\ref{boundarysc}) is a convenient sufficient condition for the proof of the previous theorem. In fact, if $(\hat{u},C)$ is an equilibrium, then before taking limit in (\ref{boundarysc}) it equals to 0 for $(s,y)\in C$, thanks to (\ref{sys1}). In $D$, we know $f=g$ and $g$ is something we know a priori. Thus, one sufficient condition to verify (\ref{boundarysc}) is $A^{\hat{u}}g(t,x,s,y)\leq 0$, $\forall (s,y)\in \intt(D), (t,x)\in {\partial C}$.
\end{remark}
\begin{remark}
  Recall that $\Lambda(t,x)$ is the diffusion coefficient under the candidate equilibrium strategy $\hat{u}$. The conclusion (2) in Theorem \ref{boundary} thus asserts that (\ref{spatialfitting}) is necessary if we only consider the equilibria that give nondegenerate diffusion term. This is true in many situations, including those where the diffusion term cannot be controlled.
\end{remark}
\begin{remark}
  Smooth fitting principles are always important topics in optimal stopping theory. For review of classical results on this topic, see \citet*{Goran2006}. Recently efforts have been made to establish global $C^1$ regularity of value function of optimal stopping problems, see \citet*{DEANGELIS2020} and \citet*{Cai2021} for examples. {  We have obtained similar results for time-inconsistent stopping control under additional regularity imposed on stopping boundary. It is very interesting to try to drop this assumption and {\it prove} the regularity directly from equilibrium conditions. We hope to establish this type of results in future works.}
\end{remark}
\begin{remark}
  We now make some comments on assumptions (\ref{fcond}) and (\ref{fcon2}). It is known that in control theory, to make regularity assumptions on value function is sometimes restrictive (see e.g., Example 2.3 on page 163 of \citet*{Yong1999}). However in our context, when fixing $(s,y)$ and $\hat{u}$, $f$ has clear connection to {\it linear} parabolic equations with initial and boundary value. Thus, PDE theory helps to establish regularity {\it inside} $C$. Moreover, the growth of $f$ itself follows directly from that of $g$, and we do not require the growth of derivatives of $f$, weakening the condition in \citet*{He2019}. At last, the assumption (\ref{fcon2}) can be established by uniform continuity and Whitney's extension theorem, thanks to the H$\ddo$lder estimates for parabolic equations. See Appendix \ref{fconddiscussion} for details.
\end{remark}
\subsection{Infinite time horizon}
\label{eqcharinf}
To prepare for the studying of the concrete examples in Section \ref{exm}, for which we hope to get analytical solutions, we extend the main results in the previous subsection to infinite time horizon case. One of the advantages for adapting the (weak) equilibrium formulation is that all the arguments are local (see all the proofs in Appendix \ref{proofsec3}). Therefore, it is almost trivial to exntend Theorem \ref{mainthm} once we neglect the condition (\ref{sys4}). Furthermore, Theorem \ref{boundary} still holds true for the same reason. The only thing we need to refine for the infinite time horizon theory is that the estimate (\ref{littleep2}) in Appendix \ref{proofsec3} seems not to be directly applicable anymore. However, the estimate there is still valid, except that we do not substitute $T=\infty$, but choose $T>t+1$ for any fixed $t$. This is sufficient because when applying it, we only consider the behaviour for trajectory of $X$ on $[t,t+\epsilon]$. The similar arguments in the proofs of results in Subsection \ref{eqcharfinite} (see Appendix \ref{proofsec3}) are still applicable and anything follows. We thus have the following:
\begin{theorem}
  \label{thminf}
  If the auxiliary function $f(s,y,t,x)\!\triangleq \!\mE^{t,x}g(s,y,\hat{\tau},X^{\hat{u}}_{\hat{\tau}})I_{\{\hat{\tau}<\infty\}}$ satisfies (\ref{fcond}), then $(\hat{u},C)\in \Ui\times \Oi$ is an equilibrium if and only if $f$ and $C$ solve the following system:
  \begin{align}
     & A^{\hat{u}}f(s,y,t,x) = 0, \forall (t,x)\in C, (s,y)\in E, \label{sysinf1}\\
     & \sup_{\bu \in \bU}A^{\bu}f(t,x,t,x) = 0, \forall (t,x)\in C, \label{sysinf2}\\
     & A^{\hat{u}}g(t,x,t,x)\leq 0,\forall (t,x)\in \intt(D), \label{sysinfplus}\\
     & \limsup_{\epsilon \to 0}\frac{\mE^{t,x}[f(t,x,t+\epsilon,X_{t+\epsilon})-f(t,x,t,x)]}{\epsilon}\leq 0,\forall (t,x)\in \partial C, \label{sysinf3}\\
     & f(s,y,t,x) = g(s,y,t,x), \forall (t,x)\in D, (s,y)\in E,\label{sysinf5}\\
     & f(t,x,t,x)\geq g(t,x,t,x), \forall (t,x)\in E. \label{sysinf6}
  \end{align}
  Furthermore, when $(\hat{u},C)$ is equilibrium, we have
   \begin{equation}
\hat{u}(t,x)=\mathop{{\rm argmax}}_{\bu\in \bU}A^{\bu}f(t,x,t,x),\label{equiuhatinf}
   \end{equation}
   for those $(t,x)\in C$ such that the map $\displaystyle (t,x)\mapsto \mathop{{\rm argmax}}_{\bu\in \bU}A^{\bu}f(t,x,t,x)$ is well-defined.
\end{theorem}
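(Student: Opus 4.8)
The plan is to recycle the local analysis behind Theorem \ref{mainthm} almost verbatim; the only substantive changes are the removal of the terminal condition (\ref{sys4}) and a cosmetic adjustment of one moment estimate. First I would re-establish the infinite-horizon analogues of Lemmas \ref{lm1}, \ref{lm2} and \ref{lm3}: for $(t,x)\in E\setminus\partial C$ the difference quotient in (\ref{eqcon2}) still converges to $A^{\hat u}f(t,x,t,x)$; for $(t,x)\in C$, $\bu\in\bU$ the one in (\ref{eqcon3}) converges to $A^{\bu}f(t,x,t,x)$; and for arbitrary $(\hat u,C)$ one has $A^{\hat u}f=0$ on $C$ and $f=g$ on $D$. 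Granting these, Definition \ref{eqdef} translates line by line into the system exactly as in the proof of Theorem \ref{mainthm}: (\ref{eqcon1}) $\Leftrightarrow$ (\ref{sysinf6}) directly from $J(t,x;\hat u,\hat\tau)=f(t,x,t,x)$; (\ref{eqcon3}) $\Leftrightarrow$ (\ref{sysinf2}) with $\le$, upgraded to $=$ upon setting $(s,y)=(t,x)$ in (\ref{sysinf1}); and (\ref{eqcon2}) splits into (\ref{sysinfplus}) on $\intt(D)$ and (\ref{sysinf3}) on $\partial C$. The identity (\ref{equiuhatinf}) is then immediate from (\ref{sysinf1}) and (\ref{sysinf2}).

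The reason the lemmas survive the passage to $T=\infty$ is that every argument in Appendix \ref{proofsec3} is purely local in time: the perturbed control $\hat u_{(t,\epsilon,\bu)}$ differs from $\hat u$ only on $[t,t+\epsilon)$, the perturbed stopping time $\tau_{(\hat u,C,t+\epsilon)}=\epsilon+\hat\tau\circ\theta_\epsilon$ differs from $\hat\tau$ only through the behaviour of $X$ on $[t,t+\epsilon]$, and all the It\^o expansions and characteristic-operator/semigroup identities are applied on the window $[t,t+\epsilon]$ with $\epsilon\downarrow 0$. Nothing in these computations sees a terminal time, so dropping (\ref{sys4}) --- which in the finite-horizon case only records the value at the artificial horizon $T$ --- costs nothing; the role of ``what happens at the end'' is entirely taken over by the exit structure of $\hat\tau$ together with (\ref{sysinf5}). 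For the same reason Theorem \ref{boundary} carries over unchanged, since (\ref{boundarysc}), (\ref{smoothfit}), (\ref{spatialfitting}) and the application of Peskir's local-time formula are all statements on $\partial C$ and a neighbourhood of it.

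The one genuine point to attend to is the moment estimate (\ref{littleep2}) of Appendix \ref{proofsec3}, which in the finite-horizon proof controls the relevant increments of $X$ on $[t,T]$ and is stated with the fixed horizon $T$; naively substituting $T=\infty$ is not legitimate. I would resolve this by the observation that, for a fixed initial time $t$ and for $\epsilon$ small, everything needed concerns $X$ only on $[t,t+\epsilon]$: one simply freezes an auxiliary horizon, say $T:=t+1$ (any $T>t+1$ works), applies (\ref{littleep2}) on $[t,T]$, and restricts to $[t,t+\epsilon]\subset[t,T]$. The resulting constants depend on $t$ (and on the fixed $T$) but not on $\epsilon$, which is all that the $\limsup_{\epsilon\to0}$ in (\ref{eqcon2}) and (\ref{eqcon3}) requires. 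Re-running the proofs of Lemmas \ref{lm1}--\ref{lm3} with this substitution gives the infinite-horizon lemmas, hence the theorem. I expect this localization-of-the-estimate step to be the only place where any care is genuinely required; the rest is a transcription.

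For completeness one should also record the infinite-horizon analogue of Theorem \ref{representation}: if $f$ satisfies (\ref{fcond}) and solves (\ref{sysinf1}) and (\ref{sysinf5}) for some $(\hat u,C)$, then $f(s,y,t,x)=\mE^{t,x}g(s,y,\hat\tau,X^{\hat u}_{\hat\tau})$. Its proof copies that of Theorem \ref{representation} --- localize by $\tau_n=\hat\tau\wedge\inf\{s\ge t:\|\sigma(\cdot)\nabla_x f\|\ge n\}$, apply It\^o, and pass to the limit using $f(s,y,\cdot,\cdot)\in L^{\infty}_{\rm poly}$ and dominated convergence --- with the use of (\ref{sys4}) deleted. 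The only bookkeeping subtlety here is the event $\{\hat\tau=+\infty\}$: absent a terminal time one must either assume $\hat\tau<\infty$ $\mP^{t,x}$-a.s.\ or, as in the applications of Section \ref{exm}, arrange (e.g.\ through discounting built into $g$) that its contribution vanishes; once this is granted, the dominated-convergence step and the identification of $f$ with the auxiliary function proceed exactly as before.
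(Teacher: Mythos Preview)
Your proposal is correct and mirrors the paper's own argument essentially point for point: the paper's justification (in Subsection~3.1) is precisely that the proofs in Appendix~\ref{proofsec3} are local so Theorem~\ref{mainthm} carries over once (\ref{sys4}) is dropped, with the single caveat that the estimate (\ref{littleep2}) must be applied on an auxiliary finite horizon $T>t+1$ rather than with $T=\infty$. Your additional remarks on the infinite-horizon analogue of Theorem~\ref{representation} and the handling of $\{\hat\tau=+\infty\}$ go slightly beyond what the paper states explicitly, but are consistent with it.
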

{ 
\begin{remark}\label{strictchar}
	We now make comments on the concept of strict weak equilibrium (see Definition \ref{stricteqdef}). From the proof of Theorem \ref{mainthm} (Appendix \ref{proofsec3}), the equilibrium condition (\ref{stricteqcon2}) is equivalent to
	\[
	\limsup_{\epsilon \to 0}\frac{\mE^{t,x}[f(t,x,t+\epsilon,X^\bu_{t+\epsilon})-f(t,x,t,x)]}{\epsilon}\leq 0,\forall\  (t,x)\in E,\bu \in \bU.
	\]
	The subtle point is that this requirement is imposed on {\it any} point $(t,x)$ in the {\it whole} state space and {any\it} admissible control $\bu$, not just the equilibrium one. Translating to the infinite time horizon case, the characterization system of strict weak equilibrium is similar to that in Theorem \ref{thminf}, with (\ref{sysinfplus}) revised to
	\begin{equation}\label{sysinfplusstric}\tag{\ref{sysinfplus}'}
	A^\bu g(t,x,t,x)\leq 0, \forall \bu \in \bU, (t,x)\in \intt(D).
	\end{equation}
	We immediately conclude that a strict weak equilibrium must be weak.
\end{remark}
\begin{remark} 
	In Theorem \ref{thminf}, we define the auxiliary function to be the expected reward restricted on the event $\{\hat{\tau}<\infty\}$. On the one hand, this event does not necessarily have probability 1. On the other hand, it is reasonable to assume that outside this event, i.e., in the ``never stop" scenario, the reward is zero, because it is never realized.
\end{remark}
In applications, especially in the infinite time case, the controlled $X$ is usually time-homogenous and $g$ takes the form $g(s,y,t,x)=\delta(t-s)g(x,y)$, with {\it discount function} $\delta$ satisfying $\delta(0)=0$ and $\delta(\cdot)$ nonincreasing. Under these additional assumptions, the system used to characterize the equilibrium solution becomes more elegant. Indeed, the first simplification is from the time symmetry: we can now take the open subsets $C$ of $\mX$ (under its relative topology) with $C^2$ boundary as admissible stopping polices, and taking $\tilde{C}=C\times [0,\infty)\in \Oi$ brings us to notations of the previous sections. Furthermore, it is straightforward to show that under this choice of stopping policy, $f$ has the form $f(s,y,t,x)=\mE^{x}\delta(\tau_C+t-s)g(X^{\hat{u}}_{\tau_C},y)I_{\{\tau_C<\infty\}}$, where $\tau_C=\tau_{(\hat{u},C\times[0,\infty),0)}$ under the notation of Section \ref{notation}. Therefore, restricting on the diagonal, functions $f$ and $g$ do not depend on $t$, and we denote $f^d(x)=f(t,x,t,x)$ and $g^d(x)=g(t,x,t,x)$. To ease our notation, we still denote by $\partial_x$ the spatial deferential with respect to the {\it second} $x$ variable only, i.e., $\partial_x f^d(x)\triangleq \partial_x f(t,x,t,\cdot)|_x$. Combining the above discussions, we have the following corollary, which will be used repeatedly in concrete examples in Section \ref{exm}:
\begin{corollary}
  \label{habitcoro}
   Suppose that the dynamic of diffusion $X$ under admissible control is time-homogeneous, {  that $\Lambda^{\hat{u}}(x)^{\mt}\Lambda^{\hat{u}}(x)$ is invertible for any $x\in \partial C$, and} that the reward function has the form $\delta(t-s)g(x,y)$. Moreover, assume that under the strategy pair $(\hat{u},\tau_C)$, the auxiliary function $f(s,y,t,x)=\mE^x\delta(\tau_C+t-s)g(X^{\hat{u}}_{\tau_C},y)I_{\{\tau_C<\infty\}}$ satisfies (\ref{fcond}) and (\ref{fcon2}). Then $(\hat{u},C\times [0,\infty))$ is an equilibrium if and only if $f$ and $C$ solve the following system:
  \begin{align}
     & A^{\hat{u}}f(s,t,x,y) = 0, \forall x\in C, y\in \mX, s,t\geq 0,\label{sysinfg1}\\
     & \sup_{\bu \in \bU}A^{\bu}f(t,x,t,x) = 0, \forall x\in C,t\geq 0, \label{sysinfg2}\\
     & A^{\hat{u}}g(t,x,t,x)\leq 0,\forall x\in \mX \backslash C,t\geq 0, \label{sysinfgplus}\\
     & \partial_xf^d(x)=\partial_xg^d(x),\forall x\in \partial C, \label{sysinfg3}\\
     & f(s,y,t,x) = g(s,y,t,x), \forall x\in \mX\backslash C, y\in \mX,s,t\geq 0,\label{sysinfg5}\\
     & f^d(x)\geq g^d(x), \forall x\in \mX. \label{sysinfg6}
  \end{align}
\end{corollary}
\begin{remark}\label{counterexm}
	After providing smooth fitting result in infinite time horizon case, we are able to provide an { one-dimensional} example where a weak equilibrium need not to be strict. Consider { a discount function $\delta$ with $-\infty<\delta'(0)<0$ and $g(s,y,t,x)=\delta(t-s)x^3$. Take admissible control policy as $\Ui=\{\theta_0,\theta_1\}$ (only two constant controls allowed), where $\theta_0=0$ and $\theta_1>0$. We assume that the state process under control $\theta$ is $X^\theta_t=\theta W_t$, where $W_t$ is a Brownian motion. Take the continuation region $C=(-\infty,0)$ and control $\theta=\theta_0$. Note that for $x<0$, $\mP^x(\tau_C=\infty)=1$. Therefore, $f(s,t,x)\equiv 0$ in $C$. Moreover, $\partial_x g^d(0)=0=\partial_x f^d(0)$, $f^d \geq g^d=x^3$ if $x\leq 0$, and $A^{\theta_0}g(t,x,t,x)=\delta'(0)x^3+3\theta_0 x<0$ for $x>0$. Thus, based on Corollary \ref{habitcoro}, it is trivial that $(\theta_0,C)$ is a weak equilibrium\footnote{Note that the sufficiency part of Corollary \ref{habitcoro} does not need the non-degeneracy condition of $\Lambda^{\hat{u}}$. See Theorems \ref{boundary} and \ref{thminf}. Therefore, it is valid to show that $\theta_0$ consists a weak equilibrium even it does not satisfy the non-degeneracy condition.}. However, $A^{\theta_1}g(t,x,t,x)=\delta'(0)x^3+3\theta_1 x>0$ for $0<x<\sqrt{3\theta_1/(-\delta'(0))}$, violating (\ref{sysinfplusstric}), proving that $(\theta_0,C)$ is not strict. Choosing $\delta(t)=e^{-\beta t}$ for some $\beta>0$, the problem is time-consistent. In this case, we provide an example where a weak equilibrium strategy need not to be optimal. It is interesting to investigate the relations between weak equilibrium and strict weak equilibrium (say, to find conditions under which a weak equilibrium is strict, or to investigate whether a strict equilibrium is optimal for time-consistent problems). This direction is left for future studies.}
\end{remark}
}
\begin{comment}
\begin{corollary}
  Suppose all the notations remain the same as in Corollary \ref{habitcoro}, and (\ref{assumptioninf1}) and (\ref{assumptioninf2}) hold. If $(\hat{u}, C\times [0,\infty))$ is an equilibrium, then (\ref{sysinfg6}) and the following weak smooth fitting principle must hold:
  \begin{equation}\label{weaksmooth2}\tag{WS'}
  \left.\frac{\partial f}{\partial e_k}(\cdot,x) \right| _{x}=\left.\frac{\partial g}{\partial e_k}(\cdot,x) \right|_x,\forall x\in \partial C, k\in \mK(0,x).
  \end{equation}
   Moreover, it is necessary that
  \begin{equation}
\hat{u}(x)=\mathop{{\rm argmax}}_{\bu\in \bU}A_0^{\bu}f(x,x),\forall x\in C
\label{habitnec}
   \end{equation}
   if we assume that the map $\displaystyle x\mapsto \mathop{{\rm argmax}}_{\bu\in \bU}A_0^{\bu}f(x,x)$ is well-defined and in $\Ui$.
\end{corollary}
\begin{proof}
  (\ref{sysinfg6}) is derived from (\ref{sysinf6}). By the relation between $\tilde{f}$ and $f$, it is easy to check that (\ref{smoothfit}) of $\tilde{f}$ is equivalent to (\ref{weaksmooth2}) of $f$ under the current setting. Now the conclusion follows from (3) of Theorem \ref{boundary}.
\end{proof}
\end{comment}
\section{Examples}
\label{exm}
In this section we investigate { three} concrete examples to illustrate the usage of the theoretical results established in the last section. In Subsection \ref{exmmodel} we introduce the general formulation of an investment-withdrawal decision problem, and in Subsections \ref{example1} and \ref{nonexp} we study the examples in detail. { In Subsection \ref{2dexm} we provide an example in 2-dimension, where the equilibrium stopping boundary is a circle}.
\subsection{Investment-withdrawal decision model}
\label{exmmodel}
For simplicity we assume that the decision maker has the opportunity to invest in one stock:
\[
\md S_t/S_t=\mu \md t+\sigma \md W_t,
\]
where $\mu$ and $\sigma$ are both positive constants. In this situation, it is usually assumed $\mu\neq 0$. We only consider the case $\mu>0$. Suppose that the decision maker can invest on $(0,\infty)$. The decision maker will also establish (for himself) a withdrawal mechanics. For example, he will choose a {\it continuation region} $C$. Once his wealth $X_s$ leaves $C$, he will withdraw all his investment and stop any exchange for the moment. Therefore, choosing (time-homogenous) investment proportion $\theta(\cdot)$ and continuation region $C$, the wealth dynamics and the expected pay off are expressed respectively by
\begin{align}
  & \md X^{\theta}_t=\mu\theta(X^{\theta}_t)X_t\md t+\sigma \theta(X^{\theta}_t)X_t \md W_t \label{wealthdynamic},\\
  & J(t,x;\theta,\tau)=\mE^{t,x}g(t,x,\tau,X^{\theta}_\tau)I_{\{\tau<\infty\}}.\label{valuefun}
\end{align}

When making decisions, the decision maker takes $\tau=\tau_{(\theta,C\times [0,\infty),0)}$. The reward function $g$ will be specified in the following several subsections.
\subsection{Reduction of utility by wealth level}
\label{example1}
\citet*{Christensen2018} studies an example of time-inconsistent stopping problem, where the reward is affected by the current wealth level $x$. Using the investment-withdrawal model developed in the last subsection, that example can be interpreted as an investment chance where the proportion of money invested is locked (or set) to be 1 and the agent can choose a time to withdraw. Here we extend this example to the occasion where the agent is allowed to choose and adjust the investment in response to the market performance and can also decide when to withdraw the investment. Now we briefly introduce the setting. To obtain semi-analytical solution we shall work on the case $T=\infty$. If the agent, with his wealth $x$, chooses the investment strategy $\theta$ (which depends on $x$ only) and decides to stop at $\tau$, then he will get
\begin{equation}
\label{exmpro1}
\mE^x e^{-\beta \tau}\left\{1-\exp\{-a[X^{\theta}_{\tau}-h(x)-k]\}\right\}I_{\{\tau<\infty\}} ,
\end{equation}
where $a$, $\beta$ and $k$ are positive constants, and $h$ is an increasing function with $h(0)=0$. It is seen that the larger $x$, the less he will actually get at $\tau$, given the same outcome of $X^{\theta}_{\tau}$. This is well-interpreted economically: to get ten thousand dollars means a lot to the homeless, but could mean nothing to a billionaire. We also require $x-h(x)$ to be nondecreasing, which means once withdrawn, more wealth gives more utility.
\vskip 5pt
Under the setting mentioned above, using the notations in Corollary \ref{habitcoro} we have $n=1$, $\mX=(0,\infty)$, $g(s,t,x,y)=e^{-\beta(t-s)}[1-\exp\{-a[x-h(y)-k]\}]$, $\bU=(0,\infty)$, and, to find equilibrium, we try to find $f$ and $C$ solving (\ref{sysinfg1})-(\ref{sysinfg6}).
{  In this subsection we aim to find equilibrium among those with a threshold type stopping strategy, i.e., $C=(0,x^*)$ for some $x^*>0$\footnote{ Using (\ref{sysinfgplus}), if $(\theta,C\times [0,\infty))$ is an equilibrium, it can be shown that there exists $\underline{x}>0$ such that $(0,\underline{x})\subset C$ (see similar arguments in the proof of Proposition \ref{exmirrational}). We assume $C=(0,x^*)$ for simplicity.}.
	
As a first step, we first use the necessity part of Corollary \ref{habitcoro} to narrow down our search to a candidate solution. To do this, suppose $(\theta, C\times[0,\infty))$ is an equilibrium, where $x\mapsto x\theta(x)$ is Lipschitz, $\theta(0+)<\infty$, and $C=(0,x^*)$. As a well known result from one-dimensional diffusion theory, $0$ is an inaccessible boundary point for $X$ (see \citet{ito1996diffusion} and \citet{Helland1996}), thus $\tau_C=\tau^\theta_{x^*}\triangleq\inf\{t\geq 0:X^\theta_t=x^*\}$, $\mP^x$-a.s. for any $0<x<x^*$. Therefore we have
\[
f(s,y,t,x)=e^{-\beta(t-s)}g(x^*,y)\phi(x),
\]
where $\phi$ solves
\begin{equation}
	\label{eqne}
\left\{
\begin{aligned}
	&\frac{1}{2}\sigma^2\theta(x)^2x^2\phi''(x)+\mu\theta(x)\phi'(x)=\beta\phi(x),\\
	&\phi(0)=0,\phi(x^*)=1.
\end{aligned}
\right.
\end{equation}
Using Corollary \ref{habitcoro}, we know
\begin{equation}\label{theta}
\theta(x)=-\frac{\mu f_x\big|_{(x,x)}}{\sigma^2 x f_{xx} \big| _{(x,x)}}=-\frac{\mu\phi'(x)}{\sigma^2x \phi''(x)}.
\end{equation}
Plugging this back into (\ref{eqne}), we conclude that $\phi$ must satisfy
\begin{equation}
	\label{eqne2}
	\left\{
	\begin{aligned}
		&\frac{1}{2}\kappa\frac{(\phi')^2}{\phi''}+\beta \phi=0,\\
		&\phi(0)=0,\phi(x^*)=1,
	\end{aligned}
	\right.
\end{equation}
with $\kappa=\mu^2/\sigma^2$. Moreover, $\phi$ is strictly increasing in $(0,x^*)$. There are multiple ways tho show that (\ref{eqne2}) has only one increasing solution $\phi(x)=(x/x^*)^\alpha$, $\alpha=2\beta/(2\beta+\kappa)$\footnote{ For example, one can transform the equation of (\ref{eqne2}) into $\frac{1}{2}\kappa(\log \phi')'+\beta(\log \phi)'=0$ and integrating backwards from $x^*$. Details can be provided upon requirements. One can also use inverse function method similar to the proof of Proposition \ref{appDpro} in Appendix \ref{extsBVP}.}. Using (\ref{theta}), we know $\theta(x)\equiv\theta^*\triangleq \frac{2\beta }{\mu}+\frac{\mu}{\sigma^2}$. Therefore we have proved the following result:
\begin{proposition}
	Suppose $(\theta,(0,x^*)\times [0,\infty))$ is equilibrium, then $\theta(x)\equiv \theta^*$.
\end{proposition}

The next step is to determine a threshold $x^*$ and provide sufficient conditions to ensure that our candidate strategy is indeed equilibrium.}
To this end, we note that $f$ has the following form inside $C$:
\[
f(s,t,x,y)=e^{-\beta(t-s)}\left( \frac{x}{x^*}  \right)^{\alpha(\theta^*)}(1-\exp\{-a(x^*-h(y)-k) \}),
\]
where $\alpha(\theta)=\frac{1}{2}-\frac{\mu}{\sigma^2 \theta}+\sqrt{\frac{2\beta}{\sigma^2\theta^2}+\left(\frac{1}{2}-\frac{\mu}{\sigma^2 \theta}  \right)^2}$. Here we introduce the function $\alpha(\cdot)$ for convenience of numerical experiments, where for comparison, the control may be locked to other constants rather than $\theta^*$. Clearly $f$ and $C$ satisfy (\ref{sysinfg1}) and (\ref{sysinfg5}). To show that (\ref{sysinfg3}) holds, we need
\[
\partial_x f^d(x^*)=\partial_xg^d(x^*),
\]
which, after direct computation, becomes
\begin{equation}\label{exmsmoothfit}
  \alpha(\theta^*)=[ax^*+\alpha(\theta^*)]\exp\{-a[x^*-h(x^*)-k]\}.
\end{equation}
Under the choice $(\theta^*,x^*)$ described by (\ref{exmsmoothfit}), (\ref{sysinfgplus}) becomes, for any $x\geq x^*$,
\begin{equation}
\label{exmeqcond1}
\begin{aligned}
-\beta g&+\mu \theta^* xg_x+\frac{1}{2}\sigma^2 (\theta^*)^2x^2 g_{xx}\\
&=-\beta+\left[\beta+\mu\theta^*ax-\frac{1}{2}\sigma^2(\theta^*)^2a^2 x^2\right]\exp\big\{-a[x-h(x)-k]\big\}\leq 0,
\end{aligned}
\end{equation}
We consider the following two cases of $x$:
\begin{itemize}
  \item[{\bf Case1.}]$\beta+\theta^*ax-\frac{1}{2}\sigma^2(\theta^*)^2a^2x^2\leq 0$.
  In this case it is obvious that (\ref{exmeqcond1}) is true.
  \item[{\bf Case2.}]$\beta+\theta^*ax-\frac{1}{2}\sigma^2(\theta^*)^2a^2x^2>0$.
Using the fact that $x-h(x)$ is increasing, we know
\[
\exp\big\{  -a(x-h(x)-k)\leq \exp\big\{  -a(x^*-h(x^*)-k)       \big\},\forall x\geq x^*.
\]
On the other hand,
\[
\exp\big\{  -a(x-h(x^*)-k)\leq \exp\big\{  -a(x^*-h(x^*)-k)       \big\},\forall x\geq x^*.
\]
Therefore, in this case, (\ref{exmeqcond1}) is true if
\begin{equation}
\label{666}
-\beta+\left[\beta+\mu\theta^*ax-\frac{1}{2}\sigma^2(\theta^*)^2a^2 x^2\right]\exp\left\{-a[x^*-h(x^*)-k]\right\}\leq 0.
\end{equation}
Using $\beta+\mu\theta ax-\frac{1}{2}\sigma^2 \theta^2a^2x^2 \leq \beta+\frac{\mu^2}{\sigma^2} $ and (\ref{exmsmoothfit}), we find that (\ref{666}) is true if
\[
-\beta+\left[\beta+\frac{\mu^2}{2\sigma^2}\right]\frac{\alpha(\theta^*)}{ax^*+\alpha(\theta^*)}\leq 0,
\]
which is equivalent to
\[
x^*\geq \underline{x}(\theta^*)
\]
with
\[
\underline{x}(\theta)\triangleq\frac{\mu^2 \alpha(\theta)}{2\beta\sigma^2 a }.
\]
\end{itemize}
To conclude, we have shown that (\ref{sysinfgplus}) holds if
\begin{equation}
\label{xstarcond}
x^* \geq  \underline{x}(\theta^*).
\end{equation}

Now the only condition left to be verified is (\ref{sysinfg6}), being equivalent to
\begin{equation}
\left( \frac{x}{x^*} \right)^{\alpha}\left[1-\exp\{-a[x^*-h(x)-k]\}\right]\geq 1-\exp\left\{-a[x-h(x)-k]\right\}, \forall x<x^*.
\label{fgeqg}
\end{equation}
This needs some further assumptions, which we will provide in Proposition \ref{exmconclusion} below\footnote{There are in fact gaps in the argument proving similar relations as (\ref{sysinfg6}) in \citet*{Christensen2018}. In pages 31-32 of that paper, the authors reduce their arguments to the case $h(x)=0$. However this reduction is not allowed directly. Indeed, from (\ref{exmsmoothfit}) we know that the value of $x^*$ depends crucially on the choice of $h$. Consequently, $x^*$ in the inequality (\ref{fgeqg}) are different when $h=0$. This leads to essential difficulties for proving (\ref{fgeqg}). We also provide counter example where (\ref{fgeqg}) is not true, even with simple choice of $h$ (see Remark \ref{commentsh} and Figure \ref{fig4}).}.
\begin{proposition}
\label{exmconclusion}
Let $\theta^* = \frac{2\beta}{\mu}+\frac{\mu}{\sigma^2}$, $\alpha=\frac{2\beta}{2\beta+\mu^2/\sigma^2}$. Denote by $x^*$ the solution of (\ref{exmsmoothfit}), and $x_0^*$ the solution of (\ref{exmsmoothfit}) with $h\equiv 0$, respectively. Suppose that the model parameters satisfy the following requirements:
 \begin{equation}
x_0^*> \frac{2-\alpha}{a}, \label{x0assumption}
\end{equation}
\begin{equation}
h''(x)\leq 0, \forall\ 0<x<x^*,\label{hassumption1}
\end{equation}
\begin{equation}
h'(x) <\min\left\{\frac{1}{2(1+\frac{1}{ax_0^* + \alpha-1})},\frac{1}{1-e^{-ax^*}}\cdot \left[   1-\frac{\alpha}{\min\{\alpha e^{ak},2e^{\alpha-2+ak}   \}}\right] ,\frac{1}{2}      \right\},\forall\ 0<x<x^*. \label{hassumption2}
\end{equation}
Then (\ref{fgeqg}) is satisfied. As a consequence, if (\ref{xstarcond}) is also true, $(\theta^*,(0,x^*)\times [0,\infty))$ is an equilibrium solution.
\end{proposition}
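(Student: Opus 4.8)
\emph{Proof plan.} The proposition asserts only inequality~(\ref{fgeqg}), so the task is to show that
\[
\phi(x)\triangleq\left(\frac{x}{x^*}\right)^{\alpha}\left[1-e^{-a(x^*-h(x)-k)}\right]-\left[1-e^{-a(x-h(x)-k)}\right]\ \geq\ 0,\qquad 0<x<x^*.
\]
First I would record the facts the argument rests on. From the defining equation~(\ref{exmsmoothfit}) of $x^*$ one reads off $e^{-a(x^*-h(x^*)-k)}=\alpha/(ax^*+\alpha)$, whence $x^*-h(x^*)-k>0$ and, since $h$ is nondecreasing, $1-e^{-a(x^*-h(y)-k)}>0$ for every $y\leq x^*$; a one-line differentiation shows that (\ref{exmsmoothfit}) is precisely the statement $\phi'(x^*)=0$, while trivially $\phi(x^*)=0$ and $\phi(0^+)=e^{ak}-1>0$. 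Next I would compare $x^*$ with $x_0^*$: setting $\Psi(z)\triangleq(az+\alpha)e^{-az}$, equation~(\ref{exmsmoothfit}) and its $h\equiv0$ counterpart read $\Psi(x^*)e^{ah(x^*)}=\Psi(x_0^*)=\alpha e^{-ak}$; since $h\geq0$ and $h'\geq0$, $x^*$ must lie on the strictly decreasing branch $\{z>(1-\alpha)/a\}$ of $\Psi$, so $\Psi(x^*)\leq\Psi(x_0^*)$ gives $x^*\geq x_0^*$, and with~(\ref{x0assumption}) one gets $ax^*+\alpha\geq ax_0^*+\alpha>2$.

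The next step removes the habit from the exponents. Using $h\geq0$ and the elementary bound $(x/x^*)^{\alpha}e^{-a(x^*-x)}<1$ on $(0,x^*)$, the identity
\[
\phi(x)-\tilde\phi_0(x)=\left(e^{ah(x)}-1\right)\left(e^{-a(x-k)}-(x/x^*)^{\alpha}e^{-a(x^*-k)}\right),\qquad
\tilde\phi_0(x)\triangleq\left(\frac{x}{x^*}\right)^{\alpha}\left(1-e^{-a(x^*-k)}\right)-\left(1-e^{-a(x-k)}\right),
\]
exhibits $\phi-\tilde\phi_0$ as a product of two nonnegative factors, so $\phi\geq\tilde\phi_0$ on $(0,x^*)$; it therefore suffices to prove $\tilde\phi_0\geq0$ on an initial segment $(0,\bar x]$ and then treat $[\bar x,x^*]$ on $\phi$ itself. (If $h\equiv0$ on $[0,x^*]$ then $x^*=x_0^*$, $\tilde\phi_0=\phi_0$, and one can finish directly, so assume $h(x^*)>0$.) For the first claim I would analyse the shape of $\tilde\phi_0$: its derivative vanishes exactly where the unimodal function $q(x)\triangleq x^{1-\alpha}e^{-a(x-k)}$ (increasing on $(0,(1-\alpha)/a)$, decreasing afterwards) equals the constant $\frac{\alpha}{a}(x^*)^{-\alpha}(1-e^{-a(x^*-k)})$, producing two critical points $x_-<(1-\alpha)/a<x_+$; the bound $e^{-a(x^*-k)}=e^{-ah(x^*)}\alpha/(ax^*+\alpha)<\alpha/(ax^*+\alpha)$ forces $\tilde\phi_0'(x^*)>0$, hence $x^*>x_+$, so $\tilde\phi_0$ is increasing-decreasing-increasing with $\tilde\phi_0(0^+)>0$ and $\tilde\phi_0(x^*)=0$, hence $\geq0$ precisely on $(0,\xi]$ for its unique interior zero $\xi$. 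Using the bounds on $x_0^*$ coming from~(\ref{x0assumption}) --- notably $e^{-a(x_0^*-k)}<\min\{\alpha/2,\,e^{\alpha-2+ak}\}$ --- and the second entry of the minimum in~(\ref{hassumption2}), I would verify $\tilde\phi_0(\bar x)\geq0$ for an explicitly chosen $\bar x<x^*$, so that $\phi\geq\tilde\phi_0\geq0$ on $(0,\bar x]$.

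On $[\bar x,x^*]$ I would establish convexity of $\phi$. A direct computation at the endpoint gives
\[
\phi''(x^*)=\frac{a\alpha}{x^*(ax^*+\alpha)}\Bigl[(ax^*+\alpha-1)-2h'(x^*)(ax^*+\alpha)\Bigr],
\]
which is strictly positive because the first entry of the minimum in~(\ref{hassumption2}), equal to $\frac{1}{2}\bigl(1-\frac{1}{ax_0^*+\alpha}\bigr)$, together with $x^*\geq x_0^*$ and the monotonicity of $z\mapsto 1-1/z$, forces $h'(x^*)<\frac{1}{2}\bigl(1-\frac{1}{ax^*+\alpha}\bigr)$. Using the concavity hypothesis~(\ref{hassumption1}), i.e.\ $h''\leq0$, and the residual bound $h'<\frac12$ to keep the sign of $\phi''$ under control on the whole interval, I would propagate $\phi''\geq0$ to all of $[\bar x,x^*]$; since $\phi(x^*)=\phi'(x^*)=0$, convexity there yields $\phi\geq0$ on $[\bar x,x^*]$. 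Combining this with the previous paragraph gives $\phi\geq0$ on all of $(0,x^*)$, which is~(\ref{fgeqg}), completing the verification of the remaining condition~(\ref{sysinfg6}) in Corollary~\ref{habitcoro}.

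I expect the real difficulty to be the coordination of the last two steps: $\bar x$ must be large enough for the $\tilde\phi_0$-comparison to cover $(0,\bar x]$, yet small enough that $[\bar x,x^*]$ stays inside the region where $\phi$ is convex, and then $\phi''\geq0$ must be pushed away from the single point $x^*$ using only $h''\leq0$ and the three size constraints on $h'$. It is exactly this balance --- and the fact that a larger habit both moves $x^*$ away from $x_0^*$ and widens $[\bar x,x^*]$ --- that dictates the three separate quantities in the minimum in~(\ref{hassumption2}), and where the estimates become genuinely computational.
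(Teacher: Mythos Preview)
Your reduction $\phi\geq\tilde\phi_0$ via the factorisation
\[
\phi(x)-\tilde\phi_0(x)=(e^{ah(x)}-1)\bigl(e^{-a(x-k)}-(x/x^*)^{\alpha}e^{-a(x^*-k)}\bigr)
\]
is correct and elegant, the shape analysis of $\tilde\phi_0$ is sound, and your formula for $\phi''(x^*)$ together with the observation that the first entry of the minimum in~(\ref{hassumption2}) forces $\phi''(x^*)>0$ all check out. The paper does not use any of these ideas.

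The genuine gap is the step you yourself flag as the hard one: you only establish $\phi''>0$ at the single point $x^*$ and then say you ``would propagate $\phi''\geq0$ to all of $[\bar x,x^*]$'' using $h''\leq0$ and $h'<\tfrac12$. No mechanism is given for this propagation, $\bar x$ is never specified, and there is no reason to expect the interval of convexity of $\phi$ to reach back as far as the zero $\xi$ of $\tilde\phi_0$. The expression for $\phi''(x)$ for $x<x^*$ involves $A''B+2A'B'+AB''-C''$ with several competing signs (for instance $A''<0$ and $B>0$), and $h''\leq0$ enters both $B''$ and $C''$ with the \emph{same} sign, so it cancels in $B''-C''$ but not in $AB''-C''$; there is no monotonicity argument here that makes the endpoint sign persist. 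As written, the two halves of your plan have not been shown to meet.

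The paper argues quite differently. It splits at the explicit point $(2-\alpha)/a$. On $[(2-\alpha)/a,x^*)$ it \emph{freezes} the habit at $h(x)$ and studies $\Phi(y;x)=y^{-\alpha}(1-e^{-a(y-h(x)-k)})$: writing $\Phi'(y;x)=y^{-\alpha-1}\phi(y;x)$ with $\phi(y;x)=-\alpha+(ay+\alpha)e^{-a(y-h(x)-k)}$, one shows $\phi(\cdot;x)$ has a unique zero $\xi(x)\in(x,x^*)$ and is convex in $y$ on this range, and then the integral $\int_x^{x^*}\phi(y;x)\,\md y\geq0$ follows from the elementary inequality $\xi(x)-x\geq x^*-\xi(x)$; it is precisely here, via $\xi'(x)=h'(x)(a\xi(x)+\alpha)/(a\xi(x)+\alpha-1)$, that the first entry of~(\ref{hassumption2}) is used. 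On $(0,(2-\alpha)/a)$ the paper proves the implication ``$F(x)=0\Rightarrow F'(x)<0$'' using the second entry of~(\ref{hassumption2}), and then a short topological argument rules out negative values of $F$. So the two approaches share nothing beyond the endpoint identities $\phi(x^*)=\phi'(x^*)=0$, and the paper's route avoids any need for second-order information about $\phi$ away from $x^*$.
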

\begin{proof}
  See Appendix \ref{proofexmcon}.
\end{proof}
\begin{remark}
  The assumptions in Proposition \ref{exmconclusion}, as well as (\ref{xstarcond}), seem to be complicated. In fact, there are financial insights: on the one hand, if a constant holding ratio is rational, the withdrawal threshold (desired asset level) cannot be too low (see (\ref{xstarcond})), which will be revealed again in the next subsection under a different preference model; on the other hand, if the effect of habit on preference is too strong ($h'$ is large), we cannot explicitly determine the rational strategies. Indeed, a preference that is easily influenced by habit can be regarded as a modelling of irrationality. In the numerical experiments below, we will show that under a broad and reasonable set of market parameters, all these assumptions are satisfied.
\end{remark}
\begin{remark}
\label{commentsh}
  Here we emphasize that the assumptions on $h$ in Proposition \ref{exmconclusion} are just convenient sufficient conditions. There are indeed some other possible choice of $h$ that satisfies $f\geq g$, and there are also possible choice of $h$ that violates it. In Figure \ref{fig4} we simply try linear function $h$. It is seen that for $h(x)=0.4x$, although the condition proposed in Proposition \ref{exmconclusion} is invalid, $(\theta^*,x^*)$ described above is still an equilibruim. However, for $h(x)=0.6x$, clearly, $f<g$ for some $x<x^*$. Therefore, (\ref{sysinfg6}) is invalid, and $(\theta^*,x^*)$ is not an equilibrium. This reveals an interesting fact: smooth fitting principle (by which $x^*$ is determined) does not necessarily give an equilibrium solution. This phenomenon is studied in detail by \citet*{Tan2021}, { and in \citet*{Bodnariu2022}, it is shown that introducing a kind of local time pushed mixed stopping rules can fill this gap}. 
\end{remark}
\begin{figure}[!h]
\centering
\includegraphics[width=0.7\textwidth]{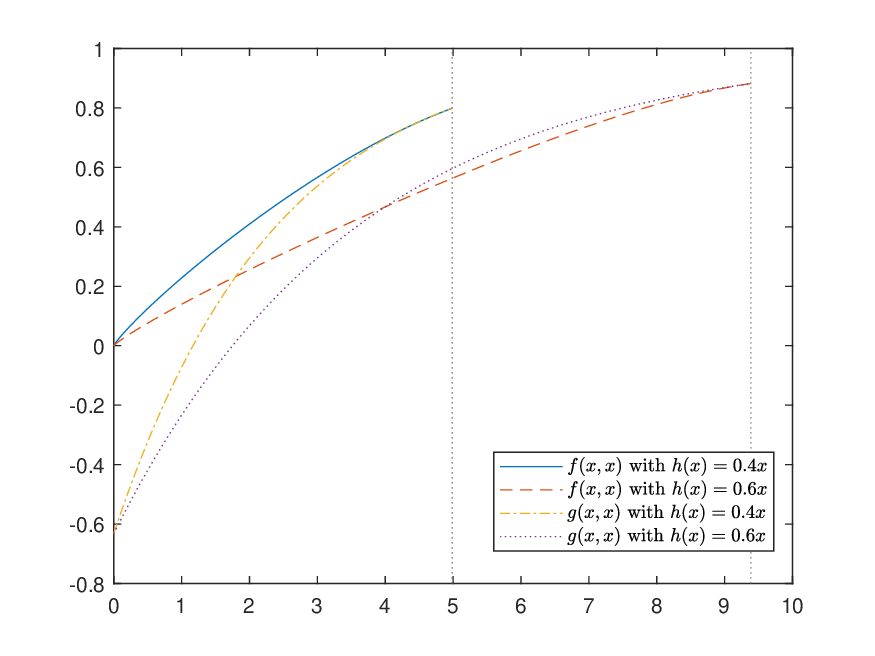}
\caption{Other choices of habit dependence function $h$.}
\label{fig4}
\end{figure}

We now take a look at some numerical examples. Here we take $\mu = 0.05$, $\sigma=0.3$, $\beta=0.1$, $k=0.7$, $a=0.7$. In this case, we choose $h(x)=0.15x$, then all assumptions in Proposition \ref{exmconclusion} are satisfied. Under this setting we have the equilibrium $\theta^*\approx 4.5556$, $x^*\approx 2.7919$. For comparison, if there is no habit dependence, i.e., $h=0$, then $\theta^*$ remains the same and the equilibrium withdrawal level is $x^*_0\approx 2.1090$. In this case, the problem is denegerated to time consistent one, and this is also the optimal investment and withdraw strategy (see \citet*{Karatzas2000}). If the investment level is locked to be $1$ and the investor is only allowed to choose the withdrawal time, then the corresponding equilibrium withdrawal level is $x^*_1\approx 1.9436$. It is seen that under the current market parameter, for sophisticated agent, the chance of discretionarily choose the investment level will make him improve the investment level while set a higher expectation wealth level. For the graphical illustrations of these results, see Figures \ref{fig2} and \ref{fig3}.
\begin{figure}[!h]
\centering
\includegraphics[width=0.7\textwidth]{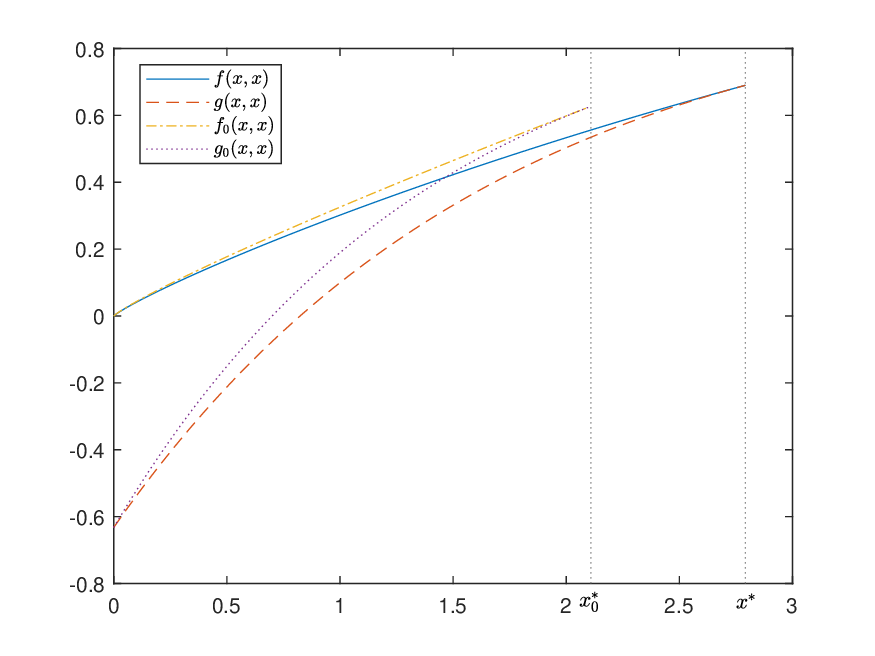}
\caption{$f_0$ is the equilibrium value for $h=0$, and $g_0$ is the corresponding reward function.}
\label{fig2}
\end{figure}
\begin{figure}[!h]
\centering
\includegraphics[width=0.7\textwidth]{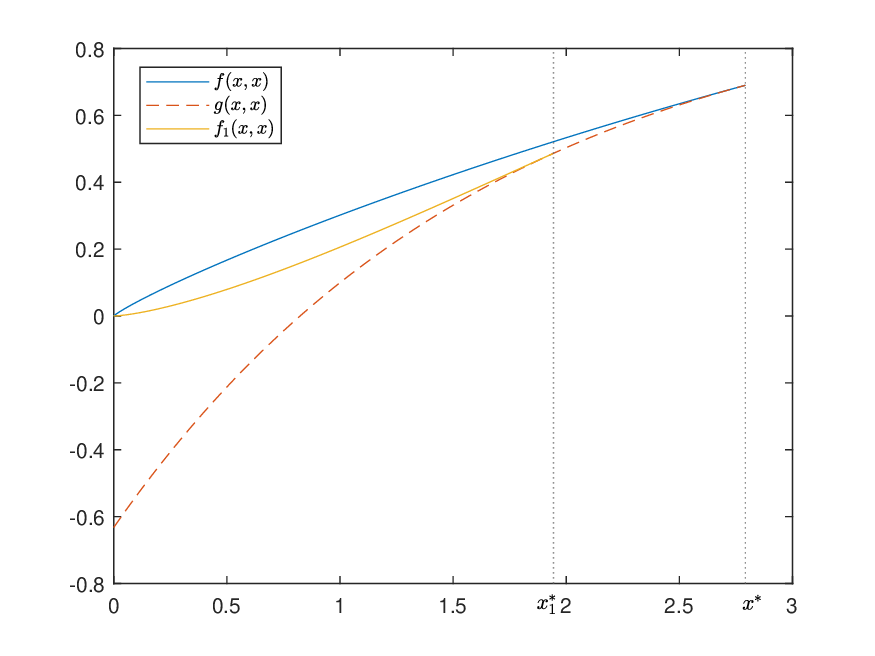}
\caption{$f_1$ is the equilibrium value when investment is locked to be 1. The reward function is still $g$.}
\label{fig3}
\end{figure}

Through numerical experiment, there are some novel financial insights from our investment-withdrawal decision model, comparing to stopping model without the discretionary investment opportunity. Specifically, there are different behaviors of the equilibrium withdrawal threshold $x^*$ when the volatility $\sigma$ change. Under non-exponential discount model, it is found in \citet*{Ebert2020} that the equilibrium withdrawal threshold $x^*$ increases with both $\mu$ and $\sigma$. We confirm this result under the endogenous habit formation model, as originally developed in \citet*{Christensen2018}. Using equilibrium theory developed in the present paper, we find that if the agent is provided with discretionary investment opportunity, the withdrawal threshold {\it decreases} with the volatility and still increases with return rate (see Figure \ref{fig666}). Because higher volatility implies more risk, and withdrawal threshold can be seen as the expectation of agent, this result is much more intuitive: giving other things the same, people should reduce their expectation when market risk becomes higher.
\begin{figure}[!h]
\centering
\includegraphics[width=0.7\textwidth]{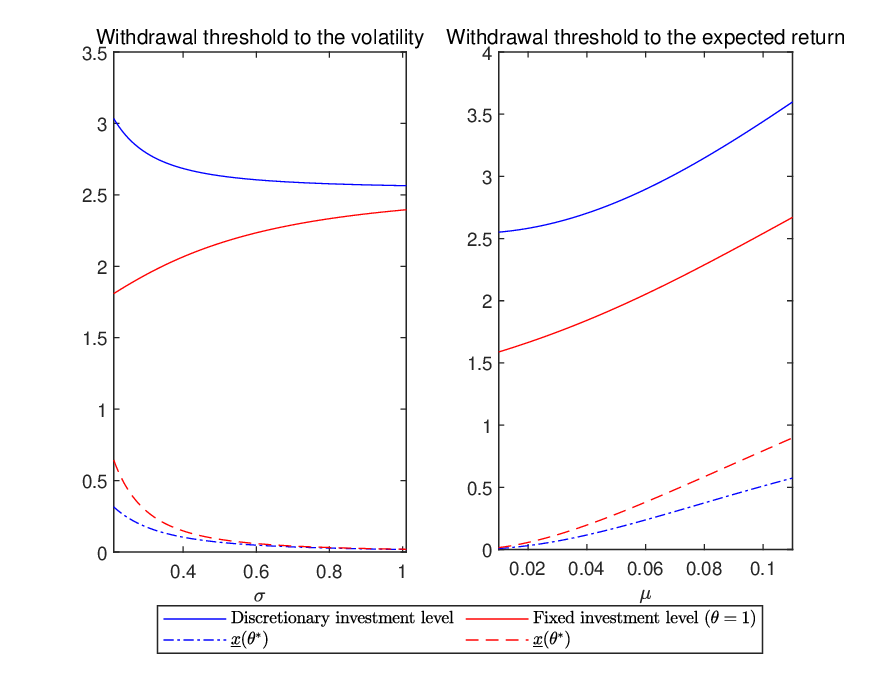}
\caption{The impact of volatility $\sigma$ and return rate $\mu$, on the equilibrium withdrawal threshold.}
\label{fig666}
\end{figure}
To assure that when $\mu$ and $\sigma$ are varying in the given range, the derived pair $(\theta^*,x^*)$ remains to be the equilibrium, we need to check the assumptions (\ref{x0assumption}) and (\ref{hassumption2}). For simplicity we denote by $M(\theta)$ the right hand side of (\ref{hassumption2}), where the dependence of $\theta$ comes from $\alpha=\alpha(\theta)$, $x^*=x^*(\theta)$ and $x^*_0=x^*_0(\theta)$. Figure \ref{fig777} justifies our analysis, and also shows that the assumptions we propose are reasonable.
\begin{figure}[!h]
\centering
\includegraphics[width=0.7\textwidth]{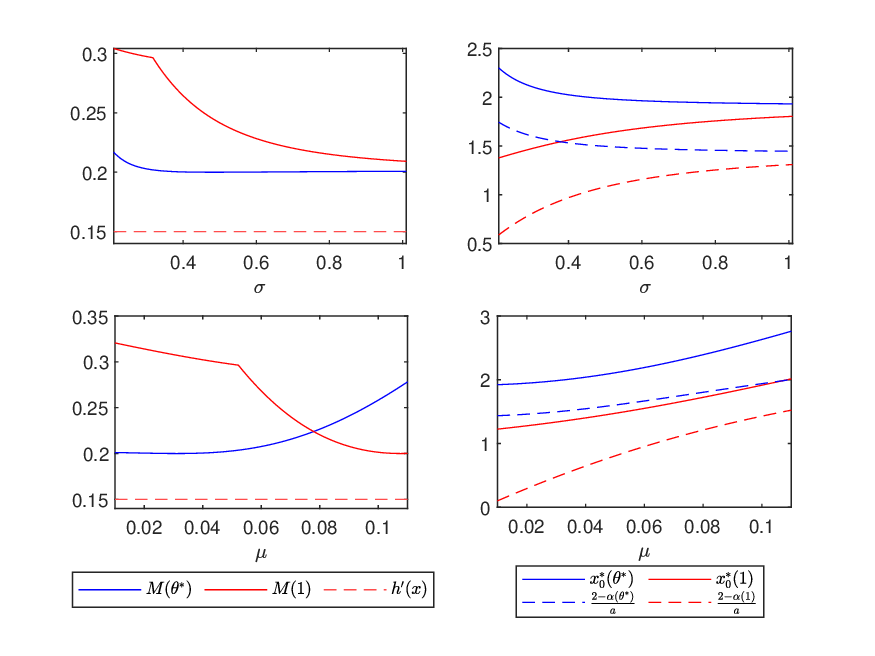}
\caption{Testing the assumptions on model parameters. Left column: testing (\ref{hassumption2}); Right column: testing (\ref{x0assumption}).}
\label{fig777}
\end{figure}
\subsection{Ambiguity on discount factor}
\label{nonexp}
In this subsection, we develop a decision model where the agent is uncertain (ambiguous) about his discount factor $\beta$, but has a belief on it. Under this setting, the problem he faces is time-inconsistent. By showing the nonexistence of {\it constant equilibrium} (the equilibrium solution with constant investment proportion), we argue that the theory proposed in this paper generates nontrivial results and is demanding for better understanding of the time-inconsistency in mathematical finance.

We consider the same model of underlying asset as in Subsection \ref{exmmodel}, while consider the following reward:
\begin{equation}
\label{payoff2}
g(s,t,x,y)=g(s,t,x)=\mB(t-s)\log(x),
\end{equation}
where $\mB$ is the {\it mean discount function} with belief $p$:
\begin{equation}
\label{discount}
\mB(t) = \int_0^{\infty}e^{-\beta t}p(\beta) \md \beta.
\end{equation}
Here $p$ is a probability density on the interval $(0,\infty)$. Recall that choosing continuation region $C$ and investment proportion $\theta$, the agent will implement $\tau=\tau_{(\theta,C\times [0,\infty),0)}$ (see the last paragraph in Subsection \ref{exmmodel}). For simplicity, we will use $\tau_{C}$ instead of $\tau_{(\theta,C\times [0,\infty],0)}$, if there is no confusion.
\begin{remark}
  The model is a time-inconsistent generalization of the example in Appendix A of \citet*{Karatzas2000}. We prove that there is no constant serving as equilibrium investment proportion (in other words, we exclude many {\it irrational} strategies). Because the uniqueness of equilibrium is hard to achieve, the fact that we can rule out many irrational strategies is quite insightful. To the best of our knowledge, this is the only negative result on the existence of constant equilibrium, except a recent paper \citet*{He2020} for a completely different problem (see (3) of Theorem 1 therein). In most of the existing literatures, where analytical solutions are attainable, the resulting equilibrium investment proportions are constant (or at least independent from the wealth). See \citet*{Karatzas2000} for time-consistent stopping control problems, \citet*{Ekeland2008}, \citet*{Yong2012}, \citet*{Bjork2017}, \citet*{Alia2017}, \citet*{He2019}, among others, for time-inconsistent control problems. Combining these observations we argue that the proposed model in the present paper, where the agent are supposed to make stopping {\it and} control decision {\it simultaneously} with the presence of time-inconsistency, brings essential differences from various existing models, and generates non-trivial results.
\end{remark}
\begin{remark}
	\label{discountexm}
  In this model the mean discount functions are special forms of general discount functions displayed in Subsection \ref{eqcharinf}. Indeed, as shown in \citet*{Ebert2020}, many popular discount functions can be expressed via a weighted distribution, similar to (\ref{discount}). Typical examples for our mean discount functions include:
  \end{remark}
  \begin{itemize}
    \item[(1)] ({\bf Quasi-exponential discount})
    \[\mB(t)=\lambda e^{-\beta_1 t}+(1-\lambda) e^{-\beta_2 t};\ \  p(\beta)=\lambda \delta_{\beta_1} + (1-\lambda) \delta_{\beta_2}, \lambda \in (0,1).\]
    \item[(2)] ({\bf Generalized hyperbolic discount})
    \begin{equation}\label{hyperbolic}
    \mB(t)=\frac{1}{(1+a t)^{b/a}};\ \  p(\beta)= \frac{\beta^{\frac{b}{a}-1}e^{-\frac{\beta}{a}}}{a^{\frac{b}{a}}\Gamma(\frac{b}{a})}, a>0,b>0.
    \end{equation}
    \item[(3)] ({\bf Compactly supported belief})
    \[\mB(t)=\int_{\ubeta}^{\obeta}e^{-\beta t} p(\beta)\md \beta,\ \ 0<\ubeta<\obeta<\infty\].
  \end{itemize}

Surprisingly, considering {\it real} ambiguity (i.e., $p$ is not singleton supported) will {\it exclude} constant investment proportion. Because the developed model is used to be describe rationality when optimal principle is not applicable, we have shown that under the developed model, any constant investment proportion is {\it irrational}. This conclusion (see Proposition \ref{exmirrational}) provides a possible explanation to many empirical documents (see e.g., \citet*{Wachter2010}) in contrary to classical Merton's suggestion or solution obtained in \citet{Karatzas2000} under time-consistent setting, which is constant proportion. Formally, we have the following:
\begin{proposition}
\label{exmirrational}
  If the reward function takes the form as in (\ref{payoff2}), and ${\rm supp}(p)$ is not singleton, then for any open subset $C$ of $(0,\infty)$ such that $C\neq (0,\infty)$, $C\neq \varnothing$, and any $\theta^*>0$, $(\theta^*,C\times [0,\infty))$ can not be equilibrium.
\end{proposition}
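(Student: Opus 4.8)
The plan is to argue by contradiction using Theorem \ref{thminf}. Suppose $(\theta^*,C\times[0,\infty))$ is an equilibrium. Under the constant proportion $\theta^*$ the wealth $X$ is a geometric Brownian motion, $\hat\tau=\inf\{t\ge0:X_t\notin C\}$, and by time-homogeneity and Fubini the auxiliary function is
\[
f(s,y,t,x)=\mE^{t,x}\big[\mB(\hat\tau-s)\log X_{\hat\tau}\big]=\int_0^\infty e^{-\beta(t-s)}\phi_\beta(x)\,p(\beta)\,\md\beta,\qquad \phi_\beta(x):=\mE^{0,x}\big[e^{-\beta\hat\tau}\log X_{\hat\tau}\big],
\]
the integrand being read as $0$ on $\{\hat\tau=\infty\}$. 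Classically $\phi_\beta\in C^2(C)$ solves $\sL_\beta\phi_\beta:=\frac12\sigma^2(\theta^*)^2x^2\phi_\beta''+\mu\theta^*x\phi_\beta'-\beta\phi_\beta=0$ in $C$ with $\phi_\beta=\log$ on $\partial C$; the homogeneous operator $\sL_\beta$ has fundamental solutions $x^{\gamma_\pm(\beta)}$, where $\gamma_\pm(\beta)$ are the roots of $\frac12\sigma^2(\theta^*)^2\gamma^2+(\mu\theta^*-\frac12\sigma^2(\theta^*)^2)\gamma-\beta=0$, so $\gamma_+(\beta)>0>\gamma_-(\beta)$ (the product of roots is $-2\beta/(\sigma^2(\theta^*)^2)<0$) and both $\gamma_\pm$ are strictly monotone, hence injective, in $\beta>0$.

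First I extract a rigid form for $\Phi(x):=f(t,x,t,x)=\int_0^\infty\phi_\beta(x)\,p(\beta)\,\md\beta$. Writing $\Psi(x):=\int_0^\infty\beta\phi_\beta(x)\,p(\beta)\,\md\beta=-f_t(t,x,t,x)$, equation (\ref{sysinf1}) at $(s,y)=(t,x)$ reads $-\Psi+\mu\theta^*x\Phi'+\frac12\sigma^2(\theta^*)^2x^2\Phi''=0$ on $C$; thus the quadratic $\bu\mapsto A^{\bu}f(t,x,t,x)=-\Psi+\mu\bu x\Phi'+\frac12\sigma^2\bu^2x^2\Phi''$ vanishes at the interior point $\bu=\theta^*$, and by (\ref{sysinf2}) $\theta^*$ realises its supremum over $\bu>0$. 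Hence $\Phi''\le0$ everywhere (otherwise the supremum is $+\infty$); at points where $\Phi''<0$ the first-order condition gives $\mu\Phi'+\sigma^2\theta^*x\Phi''=0$, and at points where $\Phi''=0$ the same two equations force $\Phi'=0$, so the identity $\mu\Phi'(x)+\sigma^2\theta^*x\Phi''(x)=0$ holds throughout $C$. Solving this regular linear ODE, on each connected component of $C$,
\[
\Phi(x)=c_1\,x^{1-\rho}+c_2,\qquad \rho:=\frac{\mu}{\sigma^2\theta^*}>0
\]
(with $\log x$ replacing $x^{1-\rho}$ when $\rho=1$), and $c_1\ne0$, for otherwise $\Phi$ is constant on the component while $\Phi=\log$ at its (distinct) boundary points.

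The contradiction comes from the moment functions $\Psi_k(x):=\int_0^\infty\beta^k\phi_\beta(x)\,p(\beta)\,\md\beta$. Since $\sL_0\phi_\beta=(\sL_\beta+\beta)\phi_\beta=\beta\phi_\beta$ with $\sL_0w:=\frac12\sigma^2(\theta^*)^2x^2w''+\mu\theta^*xw'$, one has $\Psi_{k+1}=\sL_0\Psi_k$; starting from $\Psi_1=\sL_0\Phi=c_1\kappa_0x^{1-\rho}$ with $\kappa_0:=\frac12\rho(1-\rho)\sigma^2(\theta^*)^2$, induction gives $\Psi_k=c_1\kappa_0^{\,k}x^{1-\rho}$ on each component for all $k\ge1$. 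Because $\phi_\beta=\log$ on $\partial C$ for every $\beta$, each $\Psi_k$ extends continuously to $\partial C$ with $\Psi_k(x_0)=m_k\log x_0$, where $m_k:=\int_0^\infty\beta^kp(\beta)\,\md\beta$. If $C$ has a \emph{bounded} component $(a,b)$: equating $\Phi$ and $\Psi_1$ at $a$ and at $b$ and using $a\ne b$, $m_1>0$ forces first $\kappa_0=m_1$ and $c_2=0$ (in particular $a,b$ lie on the same side of $1$, ruling out $a<1<b$, and neither equals $1$; the case $\rho=1$, where $\Psi_1$ is constant, is even more immediate); then equating $\Psi_2$ at $a$ forces $m_2=\kappa_0^2=m_1^2$, i.e.\ $p$ has zero variance, so $\mathrm{supp}(p)$ is a singleton — contradiction. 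If $C$ has a \emph{one-sided} component, say $(0,x^*)$ (the cases $(0,b)$, $(a,\infty)$, $(x^*,\infty)$ being identical): boundedness of $\phi_\beta$ at the open end forces $\phi_\beta(x)=\log(x^*)(x/x^*)^{\gamma_+(\beta)}$, letting $x\to0$ forces $c_2=0$ and $1-\rho>0$, and then $\Phi(x)=c_1x^{1-\rho}$ gives, with $\tau:=\log(x^*/x)$,
\[
\int_0^\infty e^{-\gamma_+(\beta)\tau}\,p(\beta)\,\md\beta=\frac{c_1(x^*)^{1-\rho}}{\log x^*}\,e^{-(1-\rho)\tau}\qquad(\tau>0);
\]
uniqueness of the Laplace transform makes the pushforward of $p(\beta)\md\beta$ under $\gamma_+$ a single point mass, which by injectivity of $\gamma_+$ makes $p$ a point mass — impossible for a density ($x^*=1$ is excluded since it would give $\Phi\equiv0$, contradicting $\Phi'>0$). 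Since any proper non-empty open $C\subsetneq(0,\infty)$ has at least one such component, this proves the claim.

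The step I expect to be the main obstacle is justifying the analytic manipulations underlying the moment functions: differentiation of $\Phi=\int\phi_\beta p\,\md\beta$ under the integral, the continuous extension of the $\Psi_k$ to $\partial C$, and convergence of the Laplace transforms. Finiteness of $m_1$ is forced by $f\in C^{1,2}$; finiteness of $m_2$ is not automatic, but if $m_2=\infty$ a Fatou estimate near $\partial C$ shows $\Psi_2$ must blow up, contradicting $\Psi_2=c_1\kappa_0^2x^{1-\rho}$, so $m_2<\infty$ anyway; in the one-sided case the decay $\phi_\beta(x)=O(e^{-c\sqrt\beta})$ for $x$ in the interior makes the transform converge regardless of the tail of $p$. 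Marshalling these estimates, together with the (routine) small case analysis — the sign of $1-\rho$ on the various component types, boundary values equal to $1$, disconnected or semi-infinite $C$ — is where the bulk of the work lies; conceptually everything reduces to the impossibility of a genuine superposition over a non-degenerate set of discount rates reproducing the rigid power-function profile that the control-optimality conditions impose.
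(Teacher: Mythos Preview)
Your argument is correct and takes a genuinely different route from the paper's. The paper first uses the stopping-region condition (\ref{sysinfplus}) to force the existence of a leftmost component $(0,r)\subset C$, then invokes the weak smooth-fitting necessary condition (Theorem~\ref{boundary}(3)) at $r$ to pin down $\log r>0$, and finally differentiates the explicit argmax $\tilde\theta(x)$ in $x$ and applies the equality case of Cauchy--Schwarz to the measure $\alpha_+(\theta^*,\beta)(x/r)^{\alpha_+(\theta^*,\beta)-1}p(\beta)\,\md\beta$, reducing $\mathrm{supp}(p)$ to at most two points; the two-point case is then eliminated by comparing $\tilde\theta(0+)$ with $\tilde\theta(r-)$. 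Your proof instead extracts from (\ref{sysinf1})--(\ref{sysinf2}) the single ODE $\mu\Phi'+\sigma^2\theta^*x\Phi''=0$, giving the rigid power form $\Phi=c_1x^{1-\rho}+c_2$ on every component, and then forces a contradiction componentwise: on a bounded component via the moment recursion $\Psi_{k+1}=\sL_0\Psi_k$ and the zero-variance identity $m_2=m_1^2$, on a one-sided component via Laplace-transform uniqueness. Your argument is more structural and treats all component shapes uniformly, without ever invoking (\ref{sysinfplus}) or the separate two-point endgame; the paper's route is more elementary (no moment towers or Laplace uniqueness) but pays for it with the preliminary reduction to the $(0,r)$ component and the ad hoc two-point step. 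One small point to tighten: your exclusion of $x^*=1$ (``contradicting $\Phi'>0$'') should be phrased as an appeal to the necessary weak smooth fitting $\Phi'(x^*)=1/x^*>0$ from Theorem~\ref{boundary}(3), since $\Phi'>0$ is not otherwise established; with that said explicitly the one-sided case is complete.
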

\begin{proof}
We show the proof by contradiction. Suppose that $(\theta^*,C\times [0,\infty))$ is indeed an equilibrium, we claim that ${\rm supp}(p)$ must be singleton. First, for any $x\in C$, we have $f_{xx}(t,t,x)<0$. Or otherwise suppose that there is $x\in C$ such that $f_{xx}(t,t,x)\geq 0$. Observing the following expression:
\[
A^{\theta} f(t,t,x) = f_t(t,t,x)+\mu\theta f_x(t,t,x)+\frac{1}{2}\sigma^2 \theta^2f_{xx}(t,t,x),
\]
we find that, choosing $\theta$ large enough (can be dependent on $(t,x)$), we are able to assure that $A^{\theta} f(t,t,x)>0$, contradicting (\ref{sysinf2}). Therefore, we assume that $f_{xx}(t,t,x)<0$. Because $C$ is open subset of $(0,\infty)$, it can be uniquely expressed by $C=\cup_{n=1}^{\infty} I_n$, where the family $\{I_n\}_{n=1}^{\infty}$ are countably many disjoint open intervals. We first claim that there must be one interval, say $I_1$, with the form $I_1=(0,r)$. Otherwise, we deduce that we can choose a sequence $x_k\to 0$ and $x_k\in \intt(D)$. However, using (\ref{sysinfplus}), we have
\[
-m_p \log x +\mu \theta^*-\frac{1}{2}\sigma^2(\theta^*)^2\leq 0, \forall x\in \intt(D),
\]
where
\[
m_p = \int_0^{\infty} \beta p(\beta)\md \beta>0.
\]
This leads to
\[
x \geq \exp(\frac{\mu \theta^*-\frac{1}{2}\sigma^2(\theta^*)^2}{m_p})>0,\forall x\in \intt(D),
\]
which clearly contradicts the fact that $x_k\to 0$ with $x_k\in \intt(D)$. In what follows, we focus on $I_1=(0,r)$.
%\begin{itemize}
  %\item [\bf Case1]:$l=0,r<\infty$.
  For any $x\in (0,r)$, $f(s,t,x)$ can be calculated by:
  \begin{align*}
f(s,t,x) &=\mE^x \mB(\tau_C-(s-t))\log(X^{\theta}(\tau_C))I_{\{\tau_C<\infty\}} \\
          &=\mE^x \mB(\tau_{(0,r)}-(s-t))\log(X^{\theta}(\tau_{(0,r)}))I_{\{\tau_{(0,r)}<\infty\}} \\
         &=\mE^x \log(X^{\theta}(\tau_{(0,r)}))\left[\int_0^{\infty} e^{-\beta (\tau_{(0,r)}-(s-t))} p(\beta)\md \beta      \right] \\
         &=\log r \left[\int_0^{\infty}e^{-\beta(t-s)}\left(  \frac{x}{r}\right)^{\alpha_+(\theta^*,\beta)}p(\beta)\md \beta\right],
\end{align*}
where $\alpha_+(\theta,\beta)=\frac{1}{2}-\frac{\mu}{\sigma^2 \theta}+\sqrt{\frac{2\beta}{\sigma^2\theta^2}+\left(\frac{1}{2}-\frac{\mu}{\sigma^2 \theta}  \right)^2}>0$. Using (\ref{sysinf3}) and Theorem \ref{boundary} (i.e., the weak smooth fitting principle), we have $f_x(t,t,r)=g_x(t,t,r)$, which yields
\[
\log r= 1\left/\left(\int_0^{\infty}\alpha_+(\theta^*,\beta)p(\beta) \md \beta \right)>0   \right.
\]
Direct calculation also shows
\[
xf_{xx}(t,t,x)=\log r\int_0^{\infty} \alpha_+(\theta^*,\beta)(\alpha_+(\theta^*,\beta)-1)\frac{x^{\alpha_+(\theta^*,\beta)-1}}{(r)^{\alpha_+(\theta^*,\beta)}}p(\beta)\md \beta.
\]
In light of (\ref{equiuhatinf}), for $0<x<r$, we consider
\begin{align*}
\tilde{\theta}(x)&= \mathrm{argmax}_{\theta\in \mR}A^{\theta}f(t,t,x)\\
&=-\frac{\mu f_x \big|_{(t,t,x)}}{\sigma^2 x f_{xx}\big |_{(t,t,x)}} \\
&= \frac{\mu}{\sigma^2}\cdot \frac{\int_0^{\infty} \alpha_+(\theta^*,\beta)\frac{x^{\alpha_+(\theta^*,\beta)-1}}{(r)^{\alpha_+(\theta^*,\beta)}}p(\beta)\md \beta}{\int_0^{\infty} \alpha_+(\theta^*,\beta)(1-\alpha_+(\theta^*,\beta))\frac{x^{\alpha_+(\theta^*,\beta)-1}}{(r)^{\alpha_+(\theta^*,\beta)}}p(\beta)\md \beta}
\end{align*}
We have $\tilde{\theta}(x)\equiv \theta^*$ for any $x\in (0,r)$. Taking derivative gives
\begin{align*}
\left[\int_0^{\infty} \alpha_+(\theta^*,\beta)\frac{x^{\alpha_+(\theta^*,\beta)-1}}{r^{\alpha_+(\theta^*,\beta)}}p(\beta)\md \beta   \right] &\left[ \int_0^{\infty} \alpha_+(\theta^*,\beta)(1-\alpha_+(\theta^*,\beta))^2\frac{x^{\alpha_+(\theta^*,\beta)-1}}{r^{\alpha_+(\theta^*,\beta)}}p(\beta)\md \beta    \right]\\
&-\left[ \int_0^{\infty} \alpha_+(\theta^*,\beta)(1-\alpha_+(\theta^*,\beta))\frac{x^{\alpha_+(\theta^*,\beta)-1}}{(r)^{\alpha_+(\theta^*,\beta)}}p(\beta)\md \beta  \right]^2=0.
\end{align*}
Based on the condition of Cauchy-Schwartz inequality being equality, there exists $\lambda>0$ such that
\[
\alpha_+(\theta^*,\beta)\frac{x^{\alpha_+(\theta^*,\beta)-1}}{r^{\alpha_+(\theta^*,\beta)}}=\lambda \alpha_+(\theta^*,\beta)(1-\alpha_+(\theta^*,\beta))^2\frac{x^{\alpha_+(\theta^*,\beta)-1}}{r^{\alpha_+(\theta^*,\beta)}},
\]
for any $\beta \in {\rm supp}(p)$, $x\in (0,r)$. Because the map $\beta\mapsto \alpha_+(\theta^*,\beta)$ is injective, and $|\alpha_+(\theta^*,\beta)-1|$ remains the same for all $\beta\in {\rm supp}(p)$, we conclude that ${\rm supp}(p)$ contains at most two elements. If ${\rm supp}(p)=\{\beta_1,\beta_2\}$, we assume $p(\beta)=\lambda \delta_{\beta_1}+(1-\lambda)\delta_{\beta_2}$, $\lambda\in(0,1)$, then we have
\begin{align*}
\tilde{\theta}(x)&= \frac{\mu}{\sigma^2}\cdot \frac{\lambda \alpha_1 \frac{x^{\alpha_1-1}}{r^{\alpha_1}}+(1-\lambda)\alpha_2 \frac{x^{\alpha_2-1}}{r^{\alpha_2}} }{\lambda \alpha_1(1-\alpha_1) \frac{x^{\alpha_1-1}}{r^{\alpha_1}}+(1-\lambda)\alpha_2(1-\alpha_2) \frac{x^{\alpha_2-1}}{r^{\alpha_2}}} \\
&= \frac{\mu}{\sigma^2}\cdot \frac{\lambda \alpha_1 \frac{x^{\alpha_1-\alpha_2}}{r^{\alpha_1}}+(1-\lambda)\alpha_2 \frac{1}{r^{\alpha_2}} }{\lambda \alpha_1(1-\alpha_1) \frac{x^{\alpha_1-\alpha_2}}{r^{\alpha_1}}+(1-\lambda)\alpha_2(1-\alpha_2) \frac{1}{r^{\alpha_2}}}.
\end{align*}
Here we denote $\alpha_1=\alpha_+(\theta^*,\beta_1)$, $\alpha_2=\alpha_+(\theta^*,\beta_2)$ and without loss of generality assume $\alpha_1>\alpha_2$. It is clear that $\tilde{\theta}(0+)=\frac{\mu}{\sigma^2}\frac{1}{1-\alpha_2}$ and $\tilde{\theta}(r-)=\frac{\mu}{\sigma^2}\frac{\lambda \alpha_1+(1-\lambda)\alpha_2}{\lambda \alpha_1(1-\alpha_1)+(1-\lambda)\alpha_2(1-\alpha_2)}$. $\tilde{\theta}(r-)=\tilde{\theta}(0+)$ now gives $\alpha_1=\alpha_2$, a contradiction. Therefore, ${\rm supp}(p)$ is singleton. \qedhere
\end{proof}
{ 
By Proposition \ref{exmirrational} we conclude that there is no investment proportional to the wealth (constant equilibrium) that can serve as equilibrium strategy. The natural question is that is there any equilibrium strategy at all? In the rest of the present subsection, we give a positive answer in a special case where the discount function is two-point quasi exponential (see Remark \ref{discountexm}). Using the methodology proposed in this paper, we find that the equilibrium strategy is described by two coupled {\bf singular boundary value problems (sBVP)}. They are interesting and mathematically challenging in their own rights and the existence of positive solutions of such problems are dealt with in Appendix \ref{extsBVP}.

To see this, we pick
\[
\mB(t)=\frac{1}{2}e^{-\beta_1 t}+\frac{1}{2}e^{-\beta_2 t},
\]
with $0<\beta_1<\beta_2$\footnote{To avoid complicated notations, we choose two-point distribution and uniform weight (i.e., both discount rates appear with probability 1/2). It is straightforward to generalize the content of this part to the general finite discrete distributions:
\[
\mB(t)=\sum_{j=1}^N p_je^{-\beta_jt},
\]
where $\sum_j p_j=1$. But this generalization leads to complicated notations and tedious combinatorial discussions. Moreover, the equilibrium is described by an ($N-1$)-coupled system of singular boundary value problem, which is a generalization of (\ref{sBVP1}).}. Recall that when we choose the control strategy $\theta$, the dynamic of the wealth will be (see (\ref{wealthdynamic})):
\[
X^{\theta}_t=\mu\theta(X^{\theta}_t)X_t\md t+\sigma \theta(X^{\theta}_t)X_t \md W_t.
\]

Now for a candidate strategy $\theta$ (not necessarily constant) and a continuation region $(0,b)$, the corresponded value function is
\begin{equation}
\label{valuefunctionqexp}
f(s,t,x)=\frac{\log b}{2}\left(\mE^{x}e^{-\beta_1(\tau^{\theta}_b+t-s)}    +\mE^{x}e^{-\beta_2(\tau^{\theta}_b+t-s)} \right),0<x<b.
\end{equation}
where
\[
\tau^{\theta}_b=\inf\{t\geq 0: X^{\theta}_t=b\}.
\]
From one-dimensional diffusion theory we know that when the drift and diffusion function $x\mapsto x\theta(x)$ is Lipschitz (which we require), $0$ is an inaccessible boundary point (see \cite{ito1996diffusion} and \cite{Helland1996} for detailed illustrations). Therefore, under the stopping policy $C=(0,b)$ and control strategy $\theta$, the implemented stopping time is $\tau^{\theta}_b$, hence justifying the expression (\ref{valuefunctionqexp}). Using Corollary \ref{habitcoro}, we conclude that the pair $(\theta,b)$ is equilibrium if and only if $x\mapsto x\theta(x)$ is Lipschitz, and
\begin{align}
	&\theta(x)=-\frac{\mu (\phi'_1(x)+\phi'_2(x))}{x\sigma^2(\phi''_1(x)+\phi''_2(x))},0<x<b \label{qexpcon1}\\
	&\frac{\log b}{2}(\phi'_1(b)+\phi'_2(b))=\frac{1}{b},\label{qexpcon2}\\
	&\frac{\log b}{2}(\phi_1(x)+\phi_2(x))\geq \log x,0<x<b \label{qexpcon3}\\
	&\log b \geq \frac{\kappa}{\beta_1+\beta_2}\label{qexpcon4},
\end{align}
where $\phi_i(x)=\mE^x e^{-\beta_i \tau^\theta_b}$, $i=1,2$ and $\kappa=\frac{\mu^2}{\sigma^2}$. Meanwhile, from diffusion theory, we know that the function $\phi_i$ is the unique positive increasing solution of the following boundary value problem:
\begin{equation}
	\label{BVP1}
	\left\{
	\begin{aligned}
		&\frac{\sigma^2}{2}x^2\theta(x)^2\phi''_i+\mu x\theta(x)\phi'_i-\beta_i\phi_i=0,\\
		&\phi_i(0)=0,\phi_i(b)=1.
	\end{aligned}
	\right.
\end{equation}
Plugging (\ref{qexpcon1}) into (\ref{BVP1}) and rearranging the resulted problem equivalently, we have
\begin{equation}
	\label{BVP2}
	\left\{
	\begin{aligned}
		&\phi''_i+\kappa \frac{\phi'_1+\phi'_2}{\beta_1\phi_1+\beta_2\phi_2}\phi'_i-\frac{\kappa}{2}\beta_i\left(\frac{\phi'_1+\phi'_2}{\beta_1\phi_1+\beta_2\phi_2}\right)^2\phi_i=0,\\
		&\phi_i(0)=0,\phi_i(b)=1.
	\end{aligned}
	\right.
\end{equation}
The following simple lemma will be the first of some auxiliary results in this subsection.
\begin{lemma}
	\label{qelemma1}
	Suppose (\ref{BVP2}) has a positive solution $\phi_i$, $i=1,2$ which are strictly increasing on $(0,b)$. Then
	\[
	\tilde{\theta}(x)\triangleq \frac{2}{\mu}\frac{\beta_1\phi_1+\beta_2\phi_2}{\phi'_1+\phi'_2}
	\]
	is Lipschitz and $\theta(x)\triangleq \tilde{\theta}(x)/x$ satisfies (\ref{qexpcon1}) and (\ref{BVP1}).
\end{lemma}
\begin{proof}
	Summing the first equation in (\ref{BVP2}) over $i=1,2$, we get
	\[
	\phi''_1+\phi''_2+\frac{\kappa}{2}\frac{(\phi'_1+\phi'_2)^2}{\beta_1\phi_1+\beta_2\phi_2}=0.
	\]
	Therefore
	\[
	\tilde{\theta}(x)=-\frac{\kappa}{\mu}\frac{\phi'_1+\phi'_2}{\phi''_1+\phi''_2},
	\]
	and $\theta$ satisfies (\ref{qexpcon1}) and (\ref{BVP1}). On the other hand
	\begin{align*}
		\tilde{\theta}'(x)&=\frac{2}{\mu}\frac{(\beta_1\phi'_1+\beta_2\phi'_2)(\phi'_1+\phi'_2)-(\beta_1\phi_1+\beta_2\phi_2)(\phi''_1+\phi''_2)}{(\phi'_1+\phi'_2)^2}\\
		&=\frac{2}{\mu}\frac{(\beta_1\phi'_1+\beta_2\phi'_2)(\phi'_1+\phi'_2)+\frac{\kappa}{2}(\phi'_1+\phi'_2)^2}{(\phi'_1+\phi'_2)^2}\\
		&\in \left[0, \frac{2}{\mu}(\beta_2+\frac{\kappa}{2})\right].
	\end{align*}
Therefore $\tilde{\theta}$ is Lipschitz.
\end{proof}

In the following we focus on (\ref{BVP2}), which is a fully coupled nonlinear singular boundary problem. To simplify it we first use the following linear transformation:
\[
\psi_1(x)=\phi_1(x)+\phi_2(x),\psi_2(x)=\beta_1\phi_1(x)+\beta_2\phi_2(x).
\]
Under this transformation, (\ref{BVP2}) is transformed to
\begin{equation}
	\label{BVP3}
	\left\{
	\begin{aligned}
		&\psi''_1+\frac{\kappa}{2}\frac{(\psi'_1)^2}{\psi_2}=0,\\
		&\psi''_2+\kappa\frac{\psi'_1}{\psi_2}\psi'_2=\frac{\kappa}{2}\left((\beta_1+\beta_2)\frac{\psi'_1}{\psi_2}\psi'_1-\beta_1\beta_2\left(\frac{\psi'_1}{\psi_2}  \right)^2\psi_1    \right),\\
		&\psi_1(0)=\psi_2(0)=0,\\
		&\psi_1(b)=2,\psi_2(b)=\beta_1+\beta_2.	
	\end{aligned}
\right.
\end{equation}
We then consider the decoupling function $h$, i.e., we suppose { $\psi_2(x)=h(\psi_1(x))$}. By direct calculation, we can decouple (\ref{BVP3}) into two sBVP ($h=h(t)$):
\begin{equation}
	\label{sBVP1}
	\left\{
	\begin{aligned}
		&h''+\frac{\kappa}{2}\left[\frac{\beta_1\beta_2 t}{h^2}-\frac{\beta_1+\beta_2}{h}+\frac{h'}{h}    \right]=0,\\
		&h(0)=0,h(2)=\beta_1+\beta_2.
	\end{aligned}
	\right.
\end{equation}

\begin{equation}
	\label{sBVP2}
	\left\{
	\begin{aligned}
		&\psi''+\frac{\kappa}{2}\frac{(\psi')^2}{h(\psi)}=0,\\
		&\psi(0)=0,\psi(b)=2.
	\end{aligned}
	\right.
\end{equation}
\begin{remark}
	The trivialization of the case $\beta_1=\beta_2$ is through the sBVP (\ref{sBVP1}) because if $\beta=\beta_1=\beta_2$, it admits analytical solution $h(t)=\beta t$.
\end{remark}
It turns out that our existence result about equilibrium stopping and control strategies depends crucially on the existence of positive solutions together with some upper and lower bound estimation. We state this result in Lemma \ref{sBVPthm} below, whose proof relies on Leray-Schauder topological degree theory, and is postponed to Appendix \ref{extsBVP}.
\begin{lemma}
	\label{sBVPthm}
	(\ref{sBVP1}) has a solution $h\in C^1[0,2]\cap C^2(0,2)$ such that $\beta_1\leq h'(t)\leq \beta_2$, $\forall t\in [0,2]$. For any $b>0$, (\ref{sBVP2}) has a strictly increasing solution $\psi^b\in C[0,b]\cap C^2(0,b)$ such that $(x/b)^{\alpha_2}\leq \psi^b(x)/2\leq (x/b)^{\alpha_1}$, with $\alpha_i=2\beta_i/(2\beta_i+\kappa)$, $i=1,2$.
\end{lemma}
\begin{proof}
	See Appendix \ref{extsBVP}.
\end{proof}
(\ref{qexpcon1}) can now be verified using the results in Lemmas \ref{qelemma1} and \ref{sBVPthm}. The next lemma, on the other hand, deals with conditions (\ref{qexpcon2})-(\ref{qexpcon4}).

\begin{proposition}
	\label{qelemma3}
	Assume that
	\begin{equation}
		\label{qeqcond}
		\kappa<\min\{\frac{2\beta_1\beta_2}{\beta_2-\beta_1},\beta_1+\beta_2\}.
	\end{equation}
	Suppose that $h$ and $\psi^b$ are as in Lemma \ref{sBVPthm}. Then there exists $b>1$ such that (\ref{qexpcon2})-(\ref{qexpcon4}) hold. As a consequence, the strategy given by $(\theta^*,(0,b)\times (0,\infty))$ is an equilibrium, where
	\[
	\theta^*(x)=\left\{
	\begin{aligned}
	&\frac{2}{\mu x}\frac{h(\psi^b(x))}{(\psi^b)'(x)},&0<x<b,\\
	&\frac{(\beta_1+\beta_2)\log b}{\mu},&x\geq b.
	\end{aligned}
	\right.
	\]
\end{proposition}
\begin{proof}
	Using the equation and initial condition at $x=0$ in (\ref{sBVP2}), we have the following expression:
	\[ 
	\psi^b(x)=(\psi^b)'(b)\int_0^x \exp\left(\frac{\kappa}{2}\int_{\psi^b(y)}^2\frac{1}{h(z)}\md z\right) \md y.
	\]
	Because $\psi^b(b)=2$, we have
	\[ 
	(\psi^b)'(b)=2\left/\int_0^b \exp\left(\frac{\kappa}{2}\int_{\psi^b(y)}^2\frac{1}{h(z)}\md z\right) \md y.    \right.
	\]
	Therefore, $(\psi^b)'(b)=2/(b\log b)$ is equivalent to
	\[ 
	b\log b= G(b)\triangleq \int_0^b \exp\left(\frac{\kappa}{2}\int_{\psi^b(y)}^2\frac{1}{h(z)}\md z\right) \md y.
	\]
	By the fact that $h(t)\geq \beta_1 t$ and $\psi^b(x)/2\geq (x/b)^{\alpha_2}$ we have
	\begin{align*}
	G(b)& \leq \int_0^b \exp\left(\frac{\kappa}{2}\int_{\psi^b(y)}^2\frac{1}{\beta_1 z}\md z\right) \md y\\
	&\leq \int_0^b \left( \frac{b}{y}  \md y \right)^{\frac{\kappa \alpha_2}{2\beta_1}}\\
	&=\frac{b}{1-\frac{\kappa \alpha_2}{2\beta_1}}.
	\end{align*}
Here we use $ \frac{\kappa \alpha_2}{2\beta_1}<1$, which directly comes from (\ref{qeqcond}). That is to say, $G(b)$ is at most linear growth on $b$. Therefore for $b$ large enough, $b\log b>G(b)$. But for $b\leq 1$, $b\log b < 0<G(b)$. From continuity with respect to $b$ (see the proof of Proposition \ref{appDpro} in Appendix \ref{extsBVP}), we conclude that there exist a $b>1$ such that $b\log b=G(b)$, i.e., (\ref{qexpcon2}) holds. Using the same way, but estimating the lower bound, we get
\[
b\log b \geq \frac{b}{1-\frac{\kappa \alpha_1}{2\beta_2}}.
\]
Therefore
\[
\log b>1>\frac{\kappa}{\beta_1+\beta_2},
\]
due to (\ref{qeqcond}), thus (\ref{qexpcon4}) is satisfied. (\ref{qexpcon3}) is equivalent to
\[
\psi^b(x)\geq \frac{2\log x}{\log b}.
\]
We investigate $H(x)\triangleq \psi^b(x)-\frac{2\log x}{\log b}$. Note that
\begin{align*}
H'(x) & = (\psi^b)'(x)-\frac{2}{x\log b} \\
      & = \frac{2}{b \log b}\left( e^{\frac{\kappa}{2}\int_{\psi^b(x)}^2\frac{1}{h(z)}\md z} -\frac{b}{x}\right) \\
      &\leq  \frac{2}{b \log b}\left[\left(\frac{2}{\psi^b(x)}\right)^{\frac{\kappa}{2\beta_1}}-\frac{b}{x}  \right]  \\
      &\leq  \frac{2}{b \log b}\left[\left(\frac{b}{x}\right)^{\frac{\kappa \alpha_2}{2\beta_1}}-\frac{b}{x} \right] \\
      &<0.
\end{align*}
Therefore we conclude $H(x)\geq H(b)=0$ for any $0<x<b$, i.e., (\ref{qexpcon3}) holds. To show the rest of the proposition, we only need to prove $\phi_i$ are both ({ strictly}) increasing, because the diffusion theory then implies $\phi_i(x)$ is indeed identified with $\mE^x e^{\beta_i \tau_b}$. To do this, we notice that by the transformation $\psi^b(x)=\phi_1(x)+\phi_2(x)$, $h(\psi^b(x))=\beta_1\phi_1(x)+\beta_2\phi_2(x)$, we have
\begin{align*}
	&\phi_1(x)=\frac{\beta_2\psi^b(x)-h(\psi^b(x))}{\beta_2-\beta_1},\\
	&\phi_2(x)=\frac{h(\psi^b(x))-\beta_1\psi^b(x)}{\beta_2-\beta_1}.
\end{align*}
Therefore $\phi_1'(x)=(\beta_2-h'(\psi^b(x))(\psi^b)'(x))/(\beta_2-\beta_1)$, $\phi_2'(x)=(h'(\psi^b(x))-\beta_1)(\psi^b)'(x))/(\beta_2-\beta_1)$, which are both non-negative. { 
Suppose for some $x_0\in (0,b)$, $\phi_1'(x_0)=0$. Because $\psi^b$ is strictly increasing, $(\psi^b)'(x_0)=\phi_1'(x_0)+\phi_2'(x_0)=\phi_2'(x_0)>0$. By (\ref{BVP2}),
\[
\phi_1''(x_0)=\frac{\kappa\beta_1}{2}\left[\frac{(\phi_2'(x_0))^2}{\beta_1\phi_1(x_0)+\beta_2\phi_2(x_0)}   \right]\phi_1(x_0)>0.
\] 
This implies that for some $x'<x_0$, $\phi_1'(x')<0$, which is a contradiction. This implies that both $\phi'_i(x)>0$ for $x\in (0,b)$, $i=1,2$, completing the proof.	
}	
\end{proof}
\begin{remark}
	(\ref{qeqcond}) is very mild and is satisfied by typical model parameters. Moreover, it is only a technical assumption used to guarantee (\ref{qexpcon2})-(\ref{qexpcon4}) and is irrelevant to (\ref{qexpcon1}), which is usually the most important relation determining equilibrium control $\theta$. As illustrated at the beginning of this subsection, taking control into consideration brings essential differences and difficulties when studying time-inconsistent problems.
\end{remark}
\begin{remark}
	To the best of our knowledge, this is the first existence result of time-inconsistency problems where no explicit form is available and the diffusion coefficient is controlled. Existence of equilibrium has been widely acknowledged as challenging and open problems. Even when stopping and control are considered separately, the existence results (of general form equilibrium) are based on either specific model (LQ or diffusion coefficient uncontrolled) or restrictive technical assumptions (say, Lipschitz condition uniform in control). In this subsection, we give an existence result under a rather practical model and mild assumptions. Indeed, it is a little bit unfortunate that all of our arguments in this subsection only apply to finitely supported distributions. It is conjectured that we have a correspondence between discount function $\mB$ and ``mysterious function" $h$ (which is the solution to (\ref{sBVP1}) in the current situation). We choose to leave this as a direction for future work.
\end{remark}
}

{ 
\subsection{A two-dimensional example}
\label{2dexm}
In this subsection we give an example in two dimensions to illustrate possible applications of our theoretical results to a multi-dimensional setting. For simplicity, we assume that a controller controls the diffusion coefficients $\theta>0$ of two independent Brownian motions:
\[
\left\{
\begin{aligned}
	&X^{1,\theta}_t=\theta W^{1}_t,\\
	&X^{2,\theta}_t=\theta W^{2}_t.
\end{aligned}
\right.
\]
The reward function is the expected squared euclidean distance to origin, discounted by a hyperbolic function of rate $\beta$. To be specific, we choose
\[
g(s,t,x)=\frac{|x|^2}{1+\beta(t-s)},
\]
for $x\in \mR^2$. Using notations in Section \ref{notation}, we assume $\Ui=(0,\theta_0]$. In other words, the controller can only choose the diffusion coefficient from a bounded interval. Besides, he can choose a region $C\subset \mR^2$ and terminate the system once the state pair $(X^{1,\theta},X^{2,\theta})$ exits $C$. He will then receives
\[
J(t,x;\theta,\tau_C)=\mE^{t,x}\left[\frac{|X^{1,\theta}_{\tau_C}|^2+|X^{2,\theta}_{\tau_C}|^2}{1+\beta\tau_C}I_{\{\tau_C<\infty\}}\right].
\]
We denote by $R^0=\sqrt{|W^{(1)}|^2+|W^{(2)}|^2}$ the Bessel process of order 0, and $\tau_b=\inf\{s\geq 0: |X^{1,\theta}_s|^2+|X^{2,\theta}_s|^2\geq b^2    \}=\sigma_{b/\theta}\triangleq\inf\{s\geq 0: R^0_s\geq b/\theta \}$. As we have $\mP^{|x|/\theta}(R^0_r>0,\forall 0\leq r<\infty)=1$, we may choose $\mX=\mR^2\backslash\{(0,0)\}$, i.e., exclude the origin from state space. { For this example, we have the following equilibrium result:}
\begin{proposition} 
	Denote by $B_*(0,r)=\{x\in\mR^2:0<|x|<r \}$ the ball without its center. Then $(\theta_0,B_*(0,\theta_0\sqrt{t^*/2\beta})\times [0,\infty) )$ is an equilibrium pair, where $t^*$ is later determined in (\ref{dterminetstar}).
\end{proposition}
To prove this proposition, considering $\theta\in (0,\theta_0]$ and $C=B_*(0,b)$, we derive by direct computation  for $x\in C$,
\begin{align*}
f(s,t,x)&=\mE^{x}\left[ \frac{|X^{1,\theta}_{\tau^b}|^2+|X^{2,\theta}_{\tau_b}|^2}{1+\beta(\tau_b+t-s)} I_{\{\tau_b<\infty\}} \right]\\
&=\mE^{|x|/\theta}\left[\frac{\theta^2 |R^0_{\sigma_{b/\theta}}|^2}{1+\beta(\sigma_{b/\theta}+t-s)}    \right].
\end{align*}
In the calculation above, the indicator function $I_{\{\sigma_{b/\theta}<\infty\}}$ is neglected because $\mP^{|x|/\theta}(\sigma_{b/\theta}<\infty)=1$. Invoking formula (2.0.1) on page 297 of \cite{2002Handbook} and (\ref{hyperbolic}) with $a=b=\beta$, we further have
\[
f(s,t,x)=\frac{b^2}{\beta}\int_0^{\infty}e^{-r(t-s)-r/\beta}\frac{I_0\left(\frac{|x|\sqrt{2r}}{\theta}\right)}{I_0\left(\frac{b\sqrt{2r}}{\theta}\right)}\md r,0<|x|<b.
\]
Here and afterwards, we denote by $I_n$ the modified Bessel function of the first kind, with order $n\in \mN$. We will find a pair $(\theta,b)$ such that (\ref{sysinfg1})-(\ref{sysinfg6}) are true. By definition of $A^\theta$, we have $A^\theta h=\partial_t h+\frac{1}{2}\theta^2(\partial_{x_1x_1}h+\partial_{x_2x_2}h)$ when operating on functions $h$. Therefore for any $\theta'\in (0,\theta_0]$
\[
A^{\theta'}g(t,t,x)=-\beta |x|^2+2\theta'^2.
\]
On the other hand, it is straightforward to show that, for $i=1,2$ and $0<|x|<b$,
\begin{align*}
	\partial_t f(t,t,x)=&-\frac{b^2}{\beta}\int_0^{\infty}e^{-r/\beta}\frac{I_0\left(\frac{|x|\sqrt{2r}}{\theta}\right)}{I_0\left(\frac{b\sqrt{2r}}{\theta}\right)}r\md r,\\
	\partial_{x_i}f(t,t,x)=&\frac{b^2}{\beta}\int_0^{\infty}e^{-r/\beta}\frac{I'_0\left(\frac{|x|\sqrt{2r}}{\theta}\right)}{I_0\left(\frac{b\sqrt{2r}}{\theta}\right)}\frac{\sqrt{2r}}{\theta}\frac{x_i}{|x|}\md r,\\
	\partial_{x_ix_i}f(t,t,x)=&\frac{b^2}{\beta}\int_0^{\infty}e^{-r/\beta}\left[\frac{I''_0\left(\frac{|x|\sqrt{2r}}{\theta}\right)}{I_0\left(\frac{b\sqrt{2r}}{\theta}\right)}\frac{2r}{\theta^2}\frac{x^2_i}{|x|^2}+\frac{I'_0\left(\frac{|x|\sqrt{2r}}{\theta}\right)}{I_0\left(\frac{b\sqrt{2r}}{\theta}\right)}\frac{\sqrt{2r}}{\theta}\left(\frac{1}{|x|}-\frac{x_i^2}{|x|^3}\right)\right]\md r.
\end{align*}
Thus, we have
\begin{align*}
A^{\theta'}f(t,t,x)&=\partial_tf(t,t,x)+\frac{\theta'^2}{2}(\partial_{x_1x_1}f(t,t,x)+\partial_{x_2x_2}f(t,t,x))\\
&=\frac{b^2}{\beta}\int_0^{\infty}e^{-r/\beta}\left[ -\frac{I_0\left(\frac{|x|\sqrt{2r}}{\theta}\right)}{I_0\left(\frac{b\sqrt{2r}}{\theta}\right)}r+\frac{I''_0\left(\frac{|x|\sqrt{2r}}{\theta}\right)}{I_0\left(\frac{b\sqrt{2r}}{\theta}\right)}\left(\frac{\theta'}{\theta}\right)^2 r +\frac{I'_0\left(\frac{|x|\sqrt{2r}}{\theta}\right)}{I_0\left(\frac{b\sqrt{2r}}{\theta}\right)}\frac{\theta'^2}{\theta}\frac{\sqrt{r}}{\sqrt{2}|x|}\right] \md r	\\
&=\frac{b^2}{\beta}\int_0^{\infty}e^{-r/\beta}\frac{r}{I_0\left(\frac{b\sqrt{2r}}{\theta}\right)}\left( \frac{\theta'}{\theta}\right)^2\left(  I''_0\left(\frac{|x|\sqrt{2r}}{\theta}\right)+\frac{I'_0\left(\frac{|x|\sqrt{2r}}{\theta}\right)}{\frac{|x|\sqrt{2r}}{\theta}}-\left(\frac{\theta}{\theta'} \right)^2I_0\left(\frac{|x|\sqrt{2r}}{\theta}\right) \right) \md r 
\end{align*}
\begin{align*}
&=\frac{b^2}{\beta}\int_0^{\infty}e^{-r/\beta}\frac{rI_0\left(\frac{|x|\sqrt{2r}}{\theta}\right)}{I_0\left(\frac{b\sqrt{2r}}{\theta}\right)}\left( \frac{\theta'}{\theta}\right)^2\left( 1-\left( \frac{\theta}{\theta'}  \right)^2\right) \md r\\
&=\frac{b^2}{\beta}\left(\left( \frac{\theta'}{\theta}\right)^2-1   \right)\int_0^{\infty}e^{-r/\beta}\frac{rI_0\left(\frac{|x|\sqrt{2r}}{\theta}\right)}{I_0\left(\frac{b\sqrt{2r}}{\theta}\right)} \md r.
\end{align*}
Here we have use the fact that $I''_0(z)+I_0'(z)/z=I_0(z)$ for $z>0$. Obviously, choosing $\theta=\theta_0$ will make $f$ satisfy (\ref{sysinf2}), and (\ref{sysinfgplus}) will be true if
\begin{equation}\label{Agexm3}
-\beta b^2+2\theta_0^2\leq 0,
\end{equation}
which will be handled at last. Next, we claim that if $\partial_{x_i}f(t,t,b)=\partial_{x_i}g(t,t,b)$, i.e., the smooth fitting condition is verified, then $f\geq g$ inside $C$ so that (\ref{sysinfg6}) is also true. Define
\[
F(z)=\frac{b^2}{\beta}\int_0^{\infty}e^{-r/\beta}\frac{I_0\left(\frac{z\sqrt{2r}}{\theta}   \right)}{I_0\left(\frac{b\sqrt{2r}}{\theta}   \right)} \md r,z\in \mR.
\]
It now suffices to show $F(z)\geq z^2$ for $0\leq z\leq b$ because $f(t,t,x)\geq g(t,t,x)$ is equivalent to $F(|x|)\geq |x|^2$. By properties of modified Bessel functions of the first kind, we know any order derivatives of $F$ are positive. In particular, $G(z)=F(z)-z^2$ satisfies $G'''(z)\geq 0$. Thus $G''$ is increasing. As $\partial_{x_i} f(t,t,x)=\partial_{x_i}F(|x|)=F'(|x|)\frac{x_i}{|x|}$, $\partial_{x_i}g(t,t,x)=2x_i$, smooth fitting condition implies $F'(b)/b=2$, i.e., $G'(b)=F'(b)-2b=0$. Now combining the facts that $G''$ is increasing, $G''(0)=-2<0$ and $G'(0)=G'(b)=0$ we know $G'(z)\leq 0$ for $0\leq z\leq b$. Clearly $G(0)=F(0)>0$, $G(b)=0$, we thus conclude that $G(z)\geq 0$ for $0\leq z\leq b$, which in turn implies $f\geq g$ inside $C$. We now only need to determine $b$ such that smooth fitting condition and (\ref{Agexm3}) are true. By direct computation, smooth fitting condition translates to
\[
\frac{b}{\beta\theta}\int_0^{\infty}e^{-r/\beta}\frac{I'_0\left(\frac{b\sqrt{2r}}{\theta}   \right)}{I_0\left(\frac{b\sqrt{2r}}{\theta}   \right)}\sqrt{2r}\md r = 2,
\]
or equivalently (by a change of variable formula) $b/\theta=\sqrt{t^*/2\beta}$, where $t^*$ satisfies
\begin{equation}\label{dterminetstar}
\int_0^{\infty}e^{-u^2/t^*}\frac{I'_0(u)}{I_0(u)}\frac{u^2}{t^*} \md u =1.
\end{equation}
Numerical results show that $t^*\approx8.3419$. Noticing that $t^*>4$, we have $b^2/\theta^2= t^*/2\beta >2/\beta$, which yields
\[
-\beta|x|^2+2\theta^2\leq -\beta b^2+2\theta^2<0,
\]
which leads to (\ref{Agexm3}). To conclude, we have proved that in the present example, the equilibrium pair is $(\theta_0,B_*(0,\theta_0\sqrt{t^*/2\beta})\times [0,\infty) )$.
}

\section{Conclusion}
\label{conlude}
This paper provides a unified framework for the studying of time-inconsistent stopping-control problems, which has not been considered before. We define the equilibrium strategies and obtain an equivalent characterization based on an extended HJB system, providing a methodology to verify or exclude equilibrium. As applications, we propose an investment-withdrawal decision model, where the time-inconsistent decision makers are provided with both the opportunity to choose portfolios and the right to stop discretionarily. Two concrete examples are studied using the equilibrium theory established in this paper, and we can show the existence of equilibrium strategies respectively in these two examples. {  Finally, a two-dimensional example is also provided to illustrate applications of our theoretical framework in multi-dimensional case.}
\begin{comment}
Similar examples with nonexponential discount under the current time-inconsistent stopping-control setting as in \citet*{Ebert2020}, are found challenging and are left for future researches.
\end{comment}

There are also many other interesting yet unexplored topics for future research. An ongoing work by the authors will consider the existence of equilibrium solutions for stopping control problems under fairly general assumptions. { Generalizations of existence results in Subsection \ref{nonexp} to other non-exponential discount functions are also listed here as important open problems}.

The established framework can also be coupled with other topics in financial mathematics, such as a more complicated market model.

\vskip 20pt
{\bf Acknowledgements.}
The authors acknowledge the support from the National Natural Science Foundation of China (Grant No.11871036, No.12271290). The authors also thank the members of the group of Actuarial Sciences and Mathematical Finance at the Department of Mathematical Sciences, Tsinghua University for their feedbacks and useful conversations. The authors gratefully appreciate
Ravi P. Agarwal from Texas A\&M University-Kingsville, Guohui Guan from Renmin University of China and Kristoffer Lindensjö from Stockholm University for their useful discussions and suggestions. We are also particularly grateful to the two anonymous reviewers and the associated editor whose suggestions helped us to greatly improve the quality of the article.

\vskip 20pt
{\bf Data availability statement.}
Data sharing not applicable to this article as no datasets were generated or analyzed during the current study.

\vskip 10pt
\bibliographystyle{plainnat}
\bibliography{ref}

\appendix
\renewcommand{\theequation}{\thesection.\arabic{equation}}

\section{Proofs of results in Section \ref{eqchar}}
\label{proofsec3}
\begin{proof}[Proof of Lemma \ref{lm1}]
 We first introduce some notations that will be used. For Markov times $\tau_1$ and $\tau_2$ taking value in $[0,T]$, we define $\tau_1\oplus \tau_2 \triangleq \tau_1 + \tau_2\circ \theta_{\tau_1}$. Strong Markovian property of the Markov processes $(t,Y)$ implies
 \[
 \mE^{t,x}F(t\oplus \tau_1 \oplus \tau_2,Y_{t\oplus \tau_1 \oplus \tau_2}) = \mE^{t,x}\mE^{t\oplus \tau_1,Y_{t\oplus \tau_1}}F(t\oplus \tau_1 \oplus \tau_2,Y_{t\oplus \tau_1 \oplus \tau_2}),
 \]
 for Borel-measurable $F$. The fact $\tau_{(\hat{u},C,t)}=t\oplus \tau_{(\hat{u},C,0)}$ and strong Markovian property yield
 \begin{align*}
 J(t,x;\hat{u},\tau_{(\hat{u},C,t+\epsilon)})&= \mE^{t,x}g(t,x,t\oplus\epsilon \oplus \tau_{(\hat{u},C,0)},X_{t\oplus \epsilon \oplus \tau_{(\hat{u},C,0)}}) \\
 &=\mE^{t,x}\mE^{t+\epsilon,X_{t+\epsilon}}g(t,x,t\oplus\epsilon \oplus \tau_{(\hat{u},C,0)},X_{t\oplus\epsilon \oplus \tau_{(\hat{u},C,0)}}) \\
 &=\mE^{t,x}\mE^{t+\epsilon,X_{t+\epsilon}}g(t,x, \tau_{(\hat{u},C,t+\epsilon)},X_{ \tau_{(\hat{u},C,t+\epsilon)}}) \\
 &=\mE^{t,x}f(t,x,t+\epsilon,X_{t+\epsilon}).
 \end{align*}
 From now on, write $f(\cdot,\cdot)=f(t,x,\cdot,\cdot)$, for fixed $(t,x)\in E$. Consider $(t,x)\in C$. For $\delta>0$ sufficiently small, let $B_{t,x}(\delta)=(t,t+\delta^2)\times \{ x':\|x'-x\|<\delta \}\subset C$. Choose a cut-off function $\chi\in C^{\infty}_c(B_{t,x}(\delta))$ such that $0\leq \chi \leq 1$, $\chi\equiv 1$ on $B_{t,x}(\delta/2)$ and denote $f^0=f\chi \in C^{2}_c(E)$. Clearly $A^{\hat{u}}f^0(t,x)=A^{\hat{u}}f(t,x)$. Based on the definition of $A^{\hat{u}}$, we have
 \begin{eqnarray}
 \mE^{t,x}[f(t+\epsilon,X_{t+\epsilon})-f(t,x)]&=&\mE^{t,x}[f^0(t+\epsilon,X_{t+\epsilon})-f^0(t,x)]
 +(I) \nonumber \\
 &=&\epsilon A^{\hat{u}}f^0(t,x)+o(\epsilon)+(I) \nonumber\\
 &=&\epsilon A^{\hat{u}}f(t,x)+o(\epsilon)+(I),\label{littleep1}
 \end{eqnarray}
 where
 \begin{align*}
 |(I)|&\leq \mE^{t,x}|f(t+\epsilon,X_{t+\epsilon})-f^0(t+\epsilon,X_{t+\epsilon})|\\
      &\leq C\mE^{t,x}|f(t+\epsilon,X_{t+\epsilon})| I_{\{\|X_{t+\epsilon}-x\|\geq \delta/2\}} \\
      &\leq C\mE^{t,x} (1+\|(X_{t+\epsilon})\|)^\gamma I_{\{\|X_{t+\epsilon}-x\|\geq \delta/2\}}  \\
      &\leq C [(\mE^{t,x}(1+\sup_{0\leq s \leq T}\|X_s)\|)^{\gamma p'}]^{1/{p'}} \mP^{t,x}(\|X_{t+\epsilon}-x\|\geq \delta/2)^{1/p} \\
      &\leq C (\frac{2}{\delta})^{2\beta/p}(\mE^{t,x}\| X_{t+\epsilon} -x  \|^{2\beta})^{1/p} \\
      &\leq C\epsilon ^{\beta/p}=o(\epsilon)
 \end{align*}
 for some $1<p<\beta$. Here we have used the fact that $f\in L^{\infty}_{\rm poly}$, $X_t=x$, H$ \ddot{\rm o}$lder's inequality, and standard estimation in stochastic differential equations: under Assumption \ref{assumption1}, for any $\beta>0$, $t\leq s_1\leq s_2\leq T$,
 \begin{eqnarray}
  &&\mE^{t,x}\|X_{s_1}-X_{s_2}   \|^{\beta}\leq C_T(1+x^{\beta}) |s_1-s_2|^{\beta/2}, \nonumber\\
 &&\mE^{t,x} \sup_{t\leq s\leq T}\|X_s\|^{\beta} \leq C_T(1+x^{\beta}).\label{littleep2}
  \end{eqnarray}
 See \citet*{Yong1999}. For $(t,x)\in \intt(D)$, we still use the cut-off technique near $(t,x)$ to make $f^0\in C^2_c(\intt(D))$ and all other things remain the same.
\end{proof}
\begin{proof}[Proof of Lemma \ref{lm2}]
Based on the definition of $J(t,x;u,\tau)$, we have
\begin{eqnarray}
&&J(t,x;\hat{u}_{(t,\epsilon,\bu)},\tau_{(\hat{u}_{(t,\epsilon,\bu)},C,t)})\nonumber\\
       &&=\mE^{t,x}g(t,x,\tau_{(\hat{u}_{(t,\epsilon,\bu)},C,t)},X^{\hat{u}_{(t,\epsilon,\bu)}}_{\tau_{(\hat{u}_{(t,\epsilon,\bu)},C,t)}})I_{\{\tau_{(\hat{u}_{(t,\epsilon,\bu)},C,t)}\leq t+\epsilon\}} \nonumber\\
       &&+\mE^{t,x}g(t,x,\tau_{(\hat{u}_{(t,\epsilon,\bu)},C,t)},X^{\hat{u}_{(t,\epsilon,\bu)}}_{\tau_{(\hat{u}_{(t,\epsilon,\bu)},C,t)}})I_{\{\tau_{(\hat{u}_{(t,\epsilon,\bu)},C,t)}> t+\epsilon\}} \nonumber\\
       &&=(I)+(II).\label{est0}
\end{eqnarray}
Noting that $\hat{u}_{(t,\epsilon,\bu)} \big|_{[t,t+\epsilon)} \equiv \bu$ (see (\ref{perturbation})), we conclude $\{\tau_{(\hat{u}_{(t,\epsilon,\bu)},C,t)}\leq t+\epsilon\}\subset \{\tau_{(\bu,C,t)}\leq t+\epsilon   \}$, and for some $\delta >0$ with $B_{t,x}(\delta)\subset C$,
\begin{align*}
|(I)|&\leq \mE^{t,x}g(t,x,\tau_{(\bu,C,t)},X^{\bu}_{\tau_{(\bu,C,t)}}) I_{\{\tau_{(\bu,C,t)}\leq t+\epsilon\}} \\
     &\leq C_{\delta}\mP^{t,x}(\sup_{t\leq s\leq t+\epsilon}\|X^{\bu}_s-x \|\geq \delta)^{1/p}.
\end{align*}
To estimate $\mP^{t,x}(\sup_{t\leq s\leq t+\epsilon}\|X^{\bu}_s-x\| \geq \delta )$, we conclude from Theorem 6.3, Chapter 1 in \citet*{Yong1999} that under the case of constant initial condition, we can take $\beta>4$. As such, using arguments in the proof of Lemma 3.1 of \citet*{Huang2019}, we have
\[
\mP^{t,x}(\sup_{t\leq s\leq t+\epsilon}\|X^{\bu}_s-x\|\geq \delta)\leq C_{\delta}\epsilon^{\gamma'}
\]
for $1<\gamma'<\beta/2-1$. Thus, picking $1<p<\gamma'$, we have
\begin{equation}
\label{est1}
|(I)|=o(\epsilon).
\end{equation}
On the other hand, it is clear from definition that $\{\tau_{(\hat{u}_{(t,\epsilon,\bu)},C,t)}> t+\epsilon\} \subset \{  \tau_{(\hat{u}_{(t,\epsilon,\bu)},C,t)}=\tau_{(\hat{u}_{(t,\epsilon,\bu)},C,t+\epsilon)}\} $. Moreover, for any $y\in \mX$,
\begin{equation}
\label{equid}
(\tau_{(\hat{u}_{(t,\epsilon,\bu)},C,t+\epsilon)},X^{\hat{u}_{(t,\epsilon,\bu)}}_{\tau_{(\hat{u}_{(t,\epsilon,\bu)},C,t+\epsilon)}} ) = (\tau_{(\hat{u},C,t+\epsilon)},X_{\tau_{(\hat{u},C,t+\epsilon)}}), \mP^{t+\epsilon,y}-{\rm a.s.}.
\end{equation}
Now, using (\ref{equid}) and Markovian property (of $(t,X^{\hat{u}_{(t,\epsilon,\bu)}})$), and conditioning on $\F_{t+\epsilon}$ if necessary, we have
\begin{align*}
(II)&= \mE^{t,x}(\mE^{t+\epsilon,X^{\bu}_{t+\epsilon}}g(t,x,\tau_{(\hat{u}_{(t,\epsilon,\bu)},C,t+\epsilon)},X^{\hat{u}_{(t,\epsilon,\bu)}}_{\tau_{(\hat{u}_{(t,\epsilon,\bu)},C,t+\epsilon)}})     )I_{\{\tau_{(\hat{u}_{(t,\epsilon,\bu)},C,t)}> t+\epsilon\}} \\
    &=\mE^{t,x}(\mE^{t+\epsilon,X^{\bu}_{t+\epsilon}}g(t,x,\tau_{(\hat{u},C,t+\epsilon)},X_{\tau_{(\hat{u},C,t+\epsilon)}})     )I_{\{\tau_{(\hat{u}_{(t,\epsilon,\bu)},C,t)}> t+\epsilon\}} \\
    &=\mE^{t,x}f(t,x,t+\epsilon,X^{\bu}_{t+\epsilon})I_{\{\tau_{(\hat{u}_{(t,\epsilon,\bu)},C,t)}> t+\epsilon\}}\\
    &=\mE^{t,x}f(t,x,t+\epsilon,X^{\bu}_{t+\epsilon})+o(\epsilon).
\end{align*}
The indicator function can be ignored by the same reason of estimation (\ref{est1}). Using the similar cut-off technique as in the proof of Lemma \ref{lm1}, we have
\begin{equation}
\label{est2}
\mE^{t,x}[f(t,x,t+\epsilon,X^{\bu}_{t+\epsilon})-f(t,x,t,x)]=\epsilon A^{\bu}f(t,x,t,x)+o(\epsilon).
\end{equation}
Combining (\ref{est0}), (\ref{est1}) and (\ref{est2}), Lemma \ref{lm2} follows.
\end{proof}

\begin{proof}[Proof of Lemma \ref{lm3}]
 (\ref{sys4}) and (\ref{sys5}) are clear from definition of $\tau_{(u,C,t)}$. To show (\ref{sys1}), using strong Markovian property, we have
\begin{align*}
\mE^{t,x}f(s,y,t+\epsilon,X_{t+\epsilon}) &=\mE^{t,x}\mE^{t+\epsilon,X_{t+\epsilon}}g(s,y,\tau_{(\hat{u},C,t+\epsilon)},X_{\tau_{(\hat{u},C,t+\epsilon)}})\\
&=\mE^{t,x}g(s,y,\tau_{(\hat{u},C,t+\epsilon)},X_{\tau_{(\hat{u},C,t+\epsilon)}}).
\end{align*}
Choosing $B_{t,x}(\delta)\subset C$ and using the same argument as in the proof of Lemma \ref{lm1}, we have
\begin{align*}
\epsilon A^{\hat{u}}f(s,y,t,x) &= \mE^{t,x}[f(s,y,t+\epsilon,X_{t+\epsilon})-f(s,y,t,x)]+o(\epsilon) \\
& =\mE^{t,x}[g(s,y,\tau_{(\hat{u},C,t+\epsilon)},X_{\tau_{(\hat{u},C,t+\epsilon)}}) -
g(s,y,\tau_{(\hat{u},C,t)},X_{\tau_{(\hat{u},C,t)}})  ] +o(\epsilon) \\
&=(I)+o(\epsilon).
\end{align*}
Noting that $\{ \tau_{(\hat{u},C,t)}>t+\epsilon  \}\subset \{\tau_{(\hat{u},C,t)}= \tau_{(\hat{u},C,t+\epsilon)}\}$, and using the similar techniques as in the proof of Lemma \ref{lm2}, we have $|(I)|=o(\epsilon)$, which leads to (\ref{sys1}).
\end{proof}
\vskip 5pt
To proceed with the proof of Theorem \ref{boundary}, we need the following technical lemmas. For simplicity, we write $B_{\delta}=\overline{B_{t,x}(\delta)}$ and define
\[
\triangle(s,y;\epsilon)\triangleq \frac{\mE^{s,y}[f(s+\epsilon,X_{s+\epsilon})-f(s,y)]}{\epsilon}.
\]
\begin{lemma}
\label{uniformconvergence}
For any compact subset $K\subset E\backslash \partial C$, $\displaystyle\lim_{\epsilon \to 0}\triangle(s,y;\epsilon)= A^{\hat{u}}f(s,y)$, uniformly in $(s,y)\in K$.
\end{lemma}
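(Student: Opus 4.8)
The pointwise limit $\lim_{\epsilon\to0}\triangle(s,y;\epsilon)=A^{\hat u}f(s,y)$ is already established inside the proof of Lemma \ref{lm1} (divide (\ref{littleep1}) by $\epsilon$); the content of this lemma is that the various $o(\epsilon)$-terms there can be controlled \emph{uniformly} over $(s,y)\in K$, and the plan is to re-run that argument with a single cut-off valid on all of $K$ and with every constant read off from the bound $\sup_{(s,y)\in K}\|y\|<\infty$. First I would reduce to the case $K\subset C$: since $E\setminus\partial C=C\sqcup\intt(D)$ is a disjoint union of relatively open sets, both $K\cap C$ and $K\cap\intt(D)$ are compact, so it is enough to handle each, the $\intt(D)$ part being identical once $f$ is replaced by the $C^{1,2}(E)$ function furnished by (\ref{fcon2}). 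So assume $K\subset C$. Because $K$ is compact and $C$ relatively open, ${\rm dist}(K,\partial C)>0$ and $K$ stays away from $\{T\}\times\mX$; I would fix \emph{one} cut-off $\psi\in C^\infty(E)$ with $0\le\psi\le1$, $\psi\equiv1$ on an open neighbourhood $U$ of $K$, and ${\rm supp}\,\psi$ a compact subset of $C$, and set $\tilde f:=f\psi$. By (\ref{fcond}), $\tilde f\in C^{1,2}(E)$ has compact support, agrees with $f$ on $U$, so $A^{\hat u}\tilde f=A^{\hat u}f$ on $U$; and by Lemma \ref{lm3} (i.e.\ (\ref{sys1})), $A^{\hat u}f\equiv0$ on $C$, whence $A^{\hat u}\tilde f\equiv0$ on $U$, while $|A^{\hat u}\tilde f|\le M$ everywhere for some constant $M$.

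\textbf{The two error terms.} Pick $r_0>0$ so that the parabolic cylinder $\{(r,z):0\le r-s<r_0,\ \|z-y\|<r_0\}$ lies in $U$ for every $(s,y)\in K$. For $(s,y)\in K$ and $\epsilon<r_0$, using $f(s,y)=\tilde f(s,y)$, I would split
\[
\triangle(s,y;\epsilon)-A^{\hat u}f(s,y)
=\underbrace{\frac{\mE^{s,y}\big[\tilde f(s+\epsilon,X_{s+\epsilon})-\tilde f(s,y)\big]}{\epsilon}-A^{\hat u}\tilde f(s,y)}_{(A)}
+\underbrace{\frac{\mE^{s,y}\big[(f-\tilde f)(s+\epsilon,X_{s+\epsilon})\big]}{\epsilon}}_{(B)}.
\]
For $(B)$: $(f-\tilde f)(s+\epsilon,z)=0$ when $\|z-y\|<r_0$, so, combining $f\in L^{\infty}_{\rm poly}$, $|\tilde f|\le|f|$, H\"older's inequality and the moment estimates (\ref{littleep2}) — whose constants depend only on $\sup_{(s,y)\in K}\|y\|$ — I would get $|(B)|\le C\epsilon^{-1}\mP^{s,y}(\|X_{s+\epsilon}-y\|\ge r_0)^{1/p}\le C\epsilon^{\beta/(2p)-1}$ for $1<p<\beta/2$, which is $o(1)$ uniformly once $\beta>2p$. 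For $(A)$: It$\hato$'s formula for $\tilde f\in C^{1,2}$ gives $\mE^{s,y}\tilde f(s+\epsilon,X_{s+\epsilon})-\tilde f(s,y)=\mE^{s,y}\int_s^{s+\epsilon}A^{\hat u}\tilde f(r,X_r)\,dr$, and since $A^{\hat u}\tilde f(s,y)=0$ and $A^{\hat u}\tilde f\equiv0$ on $U$,
\[
|(A)|=\Big|\frac{1}{\epsilon}\mE^{s,y}\!\int_s^{s+\epsilon}\!A^{\hat u}\tilde f(r,X_r)\,\ind_{\{(r,X_r)\notin U\}}\,dr\Big|
\le\frac{M}{\epsilon}\int_s^{s+\epsilon}\!\mP^{s,y}(\|X_r-y\|\ge r_0)\,dr\le CM\epsilon^{\beta/2},
\]
again uniformly in $(s,y)\in K$ by (\ref{littleep2}). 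Summing the two bounds settles the case $K\subset C$.

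\textbf{Where the difficulty lies.} The only genuine issue is uniformity, and it is resolved by the fact that the SDE estimates (\ref{littleep2}) hold with constants depending only on a bound for the initial state — this is exactly what Assumption \ref{assumption1} provides — so the pointwise $o(\epsilon)$'s of Lemma \ref{lm1} become uniform the moment the cut-off $\psi$ is chosen for all of $K$ at once. I would stress that exploiting $A^{\hat u}f\equiv0$ on $C$ is what lets me dominate $(A)$ by a pure exit probability and thereby sidestep any uniform-in-$x$ right-continuity in time of $\Theta^{\hat u},\Lambda^{\hat u}$, which is not available from Assumption \ref{assumption1}. In the complementary case $K\subset\intt(D)$ (under (\ref{fcon2})), $A^{\hat u}\tilde f$ no longer vanishes, so in $(A)$ I would instead estimate $A^{\hat u}\tilde f(r,X_r)-A^{\hat u}\tilde f(s,y)$ through the joint continuity of $A^{\hat u}\tilde f$ near $K$ together with $\mE^{s,y}\|X_r-y\|^2\le C(1+\|y\|^2)(r-s)$, the uniform Lipschitz-in-$x$ bound and the boundedness of $\Theta^{\hat u},\Lambda^{\hat u}$ from Assumption \ref{assumption1} supplying the needed uniformity.
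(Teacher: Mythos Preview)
Your proof is correct, and for $K\subset C$ it takes a genuinely different (and cleaner) route than the paper. The paper works with a single cut-off $f^0$ on a compact $K'\supset K$, applies It\^o's formula, and estimates $|A^{\hat u}f^0(r,X_r)-A^{\hat u}f^0(s,y)|$ by splitting on $\{\|X_r-y\|<\eta\}$ versus its complement, then sends $\epsilon\to 0$ followed by $\eta\to 0$ using uniform continuity of $A^{\hat u}f^0$ on $K'$. You instead exploit that $A^{\hat u}f\equiv 0$ on $C$ (which, as you note, holds for \emph{any} $(\hat u,C)$ by Lemma~\ref{lm3}), so that the It\^o integrand vanishes on $U$ and $(A)$ collapses to a pure exit-probability bound. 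This is a real simplification: no modulus of continuity for $A^{\hat u}\tilde f$ in the time variable is needed, and your remark that this sidesteps the lack of uniform-in-$x$ right-continuity of $\Theta^{\hat u},\Lambda^{\hat u}$ is well taken.

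Two small points. First, your invocation of (\ref{fcon2}) for the $\intt(D)$ case is misplaced: the extension in (\ref{fcon2}) agrees with $f$ on $C$, not on $\intt(D)$. What you actually need there is $f=g$ on $D$ (from (\ref{sys5}), again valid unconditionally by Lemma~\ref{lm3}), and $g$ is $C^{1,2}$; the cut-off should then be applied to $g$. Second, for $K\subset\intt(D)$ your fallback to a joint-continuity argument is essentially the paper's method and faces the same subtlety you flagged on $C$: Assumption~\ref{assumption1} only gives right-continuity in $t$ of $\Theta^{\hat u},\Lambda^{\hat u}$, so joint continuity of $A^{\hat u}\tilde f$ is not immediate. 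The paper's proof glosses over this point as well, so you are no worse off than the original.
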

\begin{proof}
  We choose another compact $K'$ such that $K\subset K'\subset E\backslash \partial C$, and denote the cut-off of $f$ on $K'$ by $f^0$. For any $\eta>0$, $(s,y)\in K$, we have the following estimates:
  \begin{align*}
    |\triangle(s,y;\epsilon)-A^{\hat{u}}f(s,y)| & \leq \frac{1}{\epsilon}\mE^{s,y}\int_s^{s+\epsilon}|A^{\hat{u}}f^0(r,X_r)-A^{\hat{u}}f^0(s,y)|\md r +o(1)\\
    & \leq \sup_{s\leq r\leq s+\epsilon}\mE^{s,y}|A^{\hat{u}}f^0(r,X_r)-A^{\hat{u}}f^0(s,y)| + o(1) \\
    & \leq \sup_{\substack{s\leq r\leq s+\epsilon \\ |y-y'|<\eta \\ y,y'\in K'}}|A^{\hat{u}}f^0(r,y')-A^{\hat{u}}f^0(s,y)|+(I),
  \end{align*}
  where
  \begin{align*}
    (I) &=\sup_{s\leq r\leq s+\epsilon}\mE^{s,y}|A^{\hat{u}}f^0(r,X_r)-A^{\hat{u}}f^0(s,y)|I_{\{\|X_r-y\|\geq \eta\}}     \\
     & \leq C_{K,K'}\mP^{s,y}(\sup_{s\leq r\leq s+\epsilon}\|X_r-y\|\geq \eta)\\
     & \leq o(1).
  \end{align*}
  We emphasize that the $o(1)$ in the last inequality is uniform for $(s,y)\in K$. Combining the last two estimates, we have
  \[
    \lim_{\epsilon\to 0}\sup_{(s,y)\in K}|\triangle(s,y;\epsilon)-A^{\hat{u}}f(s,y)|  \leq \sup_{\substack{|y-y'|<\eta \\ y,y'\in K'}}|A^{\hat{u}}f^0(s,y')-A^{\hat{u}}f^0(s,y)| .
  \]
  Letting $\eta\to 0$, and using uniform continuity on $K'$, we complete the proof.
\end{proof}

\begin{lemma}
\label{boundlimsup}
  Assume that (\ref{fcond}), (\ref{fcon2}) and (\ref{spatialfitting}) hold. Then for any $(t,x)\in \partial C$, $\delta>0$,
  \begin{equation}
  \label{deltabound}
  \limsup_{\epsilon \to 0}\sup_{(s,y)\in B_{t,x}(\delta)}\triangle(s,y;\epsilon)\leq C(t,x,\delta) < \infty.
  \end{equation}
\end{lemma}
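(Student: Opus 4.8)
The plan is to reduce the estimate of $\triangle(s,y;\epsilon)$ near a boundary point $(t,x)\in\partial C$ to a uniformly bounded quantity by exploiting Peskir's generalized It\^o--Tanaka formula on the $C^2$ hypersurface $\partial C$. First I would fix $(t,x)\in\partial C$ and $\delta>0$, shrink $\delta$ if necessary so that on $B_{t,x}(\delta)$ the boundary $\partial C$ is the zero set of a $C^2$ function $\phi$ with non-vanishing gradient, splitting $B_{t,x}(\delta)$ into $B^+:=B_{t,x}(\delta)\cap C$ and $B^-:=B_{t,x}(\delta)\cap\intt(D)$. Using the extension $\tilde f(s,y,\cdot,\cdot)\in C^{1,2}(E)$ from (\ref{fcon2}) together with the genuine function $f$, I would introduce the function that equals $f=\tilde f$ on $\overline{C}$ and equals $g$ on $\overline{D}$ (both are $C^{1,2}$ up to $\partial C$ by (\ref{fcond}) and by the regularity of $g$); call it $F$. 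The point of the smooth-fitting hypothesis --- (\ref{spatialfitting}) in case (1), or (\ref{smoothfit}) with $\mathring{\partial C}=\partial C$ in case (2) --- is precisely that the first-order (spatial, resp.\ tangential-plus-relevant-normal) derivatives of $F$ match across $\partial C$, so that $F$ is $C^1$ across $\partial C$ in all directions that matter, and the only possible singularity is a jump in the second normal derivative.

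Next I would apply Peskir's change-of-variables formula for $F(s+r,X_{s+r})$ on $[0,\epsilon]$ (after a localization / cut-off of $f^0$ as in the proofs of Lemmas~\ref{lm1}--\ref{lm3} to keep everything compactly supported and $L^\infty_{\rm poly}$ bounds in force). Because $F$ is $C^1$ across $\partial C$, the formula produces no local-time term with a $\nabla F$-jump; the only extra term beyond $\int_0^\epsilon A^{\hat u}F$ is a local-time-on-surface term of the form $\tfrac12\int_0^\epsilon(\text{jump of the second normal derivative of }F)\,\md \ell^{\partial C}_r$, where $\ell^{\partial C}$ is the local time of $X$ on $\partial C$. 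Taking expectations, dividing by $\epsilon$, and using that $A^{\hat u}F$ is bounded on the compact $\overline{B_{t,x}(\delta)}$ (it is continuous on each of $\overline{B^+}$, $\overline{B^-}$), I get
\[
\triangle(s,y;\epsilon)\le C_1(t,x,\delta)+\frac{1}{2\epsilon}\,\mE^{s,y}\!\!\int_0^\epsilon \bigl|\text{(second normal derivative jump)}(r,X_r)\bigr|\,\md \ell^{\partial C}_r .
\]
The jump coefficient is bounded on $\overline{B_{t,x}(\delta)}$ by continuity of the (one-sided) second derivatives, so it remains to bound $\tfrac1\epsilon\,\mE^{s,y}\ell^{\partial C}_\epsilon$ uniformly in $(s,y)\in B_{t,x}(\delta)$. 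This is a standard occupation-density estimate: since $\Lambda$ is bounded on the compact set and $\partial C$ is $C^2$, the expected local time accumulated on the surface in time $\epsilon$ is $O(\sqrt\epsilon)$ uniformly in the starting point, hence $\tfrac1\epsilon\mE^{s,y}\ell^{\partial C}_\epsilon\to 0$ (in particular stays bounded). Combining gives (\ref{deltabound}) with $C(t,x,\delta)=C_1(t,x,\delta)$.

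The main obstacle I anticipate is the careful bookkeeping in the Peskir formula near the corner where the ``time'' direction interacts with $\partial C$: in case (2) the hypothesis $\mathring{\partial C}=\partial C$ is exactly what guarantees the normal to $\partial C$ always has a nontrivial spatial component, so the relevant one-sided derivative that must match is covered by (\ref{smoothfit}), whereas in case (1) the stronger (\ref{spatialfitting}) makes all spatial first derivatives match and the argument is cleaner. I would also need to verify that the cut-off does not destroy the smooth-fitting matching on $\partial C$ (it does not, since the cut-off is $1$ near $(t,x)$), and that the local-time term is genuinely the only extra contribution --- i.e.\ that across $\partial C$ there is no jump in $\nabla F$ in any direction entering the formula, which is where the two cases are handled separately. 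The uniformity in $(s,y)\in B_{t,x}(\delta)$ throughout comes, as in Lemma~\ref{uniformconvergence}, from the fact that all the SDE estimates (\ref{littleep2}) and the hitting-probability bounds are uniform over compact sets of starting points.
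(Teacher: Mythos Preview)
Your high-level strategy (localize near $(t,x)$, apply Peskir's change-of-variable formula on the $C^2$ surface $\partial C$, and use smooth fitting to kill the local-time contribution) is correct in spirit, but there are two concrete errors that make the argument as written fail.

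\emph{First}, you misstate Peskir's formula. After correctly observing that smooth fitting forces the jump of the \emph{first} normal derivative of $F$ to vanish, you then claim the formula still produces a local-time term carrying the jump of the \emph{second} normal derivative. There is no such term. In Peskir's formula (see the display (\ref{localtimeformula}) in the paper, or \citet*{Peskir2007}), the only local-time contribution is
\[
\tfrac12\int_t^{t+\epsilon}\Bigl(\tfrac{\partial F}{\partial e_k}(r,X_r+)-\tfrac{\partial F}{\partial e_k}(r,X_r-)\Bigr)I_{\{X^k_r=p^k_r\}}\,\md L^{(X^k-p^k,0)}_r,
\]
i.e.\ a \emph{first}-derivative jump. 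The second derivatives appear only inside the ordinary $\md\langle X^i,X^j\rangle_r$ integral, with one-sided values, and that integral is already $O(\epsilon)$ uniformly because those one-sided second derivatives are locally bounded. So once smooth fitting kills the first-derivative jump, the local-time term is identically zero and you are done; there is nothing further to estimate.

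\emph{Second}, the estimate you invoke for the non-existent term is itself wrong: from $\mE^{s,y}\ell^{\partial C}_\epsilon=O(\sqrt\epsilon)$ you conclude $\tfrac1\epsilon\mE^{s,y}\ell^{\partial C}_\epsilon\to 0$, but $\sqrt\epsilon/\epsilon=\epsilon^{-1/2}\to\infty$. This is precisely why the paper, in the converse direction (part (3) of Theorem~\ref{boundary}), can show that a \emph{nonzero} first-derivative jump forces $\triangle(t,x;\epsilon)\to+\infty$; the local time grows like $\sqrt\epsilon$, not like $\epsilon$. Had a second-derivative local-time term actually existed, your bound would not close.

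For comparison, the paper handles the two cases differently. In case (1), under (\ref{spatialfitting}), it avoids Peskir entirely and uses a direct Taylor expansion: since $\tilde f_x=g_x$ on $\partial C$, both one-sided expansions of $f(s+\epsilon,X_{s+\epsilon})-f(s,y)$ share the same linear term $\tilde f_x(s,y)(X_{s+\epsilon}-y)$, with a remainder $O(\epsilon+\|X_{s+\epsilon}-y\|^2)$ uniform on $B_\delta$; taking $\mE^{s,y}$ and using $\mE^{s,y}(X_{s+\epsilon}-y)=O(\epsilon)$ and (\ref{littleep2}) yields (\ref{deltabound}) immediately. Only in case (2) does the paper invoke Peskir, and there the point is exactly that (\ref{smoothfit}) makes the first-derivative jump vanish on $B_\delta\cap\partial C$ (after shrinking $\delta$ so that the same $k\in\mK$ works throughout), whence the local-time term is zero and the remaining terms are $O(\epsilon)$ uniformly. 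Your unified Peskir approach can be made to work after deleting the spurious second-derivative term, but for case~(1) it is heavier than necessary.
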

\begin{proof}
  First assume that (\ref{spatialfitting}) holds. We consider the Taylor expansion of $f(s',y')-f(s,y)$ when $d((s',y'),(s,y))$ is sufficiently small. When $(s,y)\in C$ or $(s,y)\in \intt(D)$, we can assume that both $(s,y)$ and $(s',y')$ are in $C$ or $\intt(D)$, and $f$ is $C^{1,2}$, as such, Taylor expansion is directly applicable. Let us focus on $(s,y)\in \partial C$. For $(s',y')\in C$, using Taylor expansion for $\tilde{f}$, we have
  \begin{equation}
  \label{TaylorC}
  f(s',y')-f(s,y)=\tilde{f}_x(s,y)(y'-y)+O(|s'-s|+\|y'-y\|^2).
  \end{equation}
  For $(s',y')\in D$, we use Taylor expansion for $g$ to get
  \begin{equation}
  \label{TaylorD}
  f(s',y')-f(s,y)=g_x(s,y)(y'-y)+O(|s'-s|+\|y'-y\|^2).
  \end{equation}
  Using locally boundedness of $f_t$, $f_{xx}$, $g_t$ and $g_{xx}$, we have $O(|s'-s|+\|y'-y\|^2)$ in (\ref{TaylorC}) and (\ref{TaylorD}) are uniform in $(s,y)\in B_{\delta}$. Now, using the usual cut-off techniques, we only consider $X_{t+\epsilon}\in B_{\delta}$. Combining (\ref{TaylorC}), (\ref{TaylorD}) and (\ref{spatialfitting}), we have
  \[
  f(s+\epsilon,X_{s+\epsilon})-f(s,y)=\tilde{f}_x(s,y)(X_{s+\epsilon}-y)+O(\epsilon+\|X_{s+\epsilon}-y\|^2).
  \]
  As
  \begin{align*}
    \mE^{s,y}(X_{s+\epsilon}-y) & = \mE^{s,y}\int_s^{s+\epsilon} \Theta(r,X_r)\md r \\
    &=O(\epsilon),
  \end{align*}
  using the above estimates and (\ref{littleep2}), we have the desired conclusion.
\end{proof}
\begin{lemma}
\label{inflm}
  Suppose that $Y$ with $Y_t=0$ is a continuous semimartingale such that
  \begin{equation}
  \label{nondege}
  \lim_{\epsilon\to 0}\mE\langle Y \rangle_{t+\epsilon}/\epsilon =\sigma>0,
  \end{equation}
  and
  \begin{equation}
  \label{momentest}
  \mE|Y_{t+\epsilon}|^4\leq C\epsilon^2,\ \ \forall \epsilon>0.
  \end{equation}
  Then
  \[
  \lim_{\epsilon\to 0}\mE|Y_{t+\epsilon}|/\epsilon = \infty.
  \]
\end{lemma}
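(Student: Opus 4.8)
The plan is to sandwich the second moment, showing $\mE|Y_{t+\epsilon}|^2$ is of order $\epsilon$ for small $\epsilon$, and then to upgrade this to a lower bound on $\mE|Y_{t+\epsilon}|$ by interpolating against the fourth-moment bound (\ref{momentest}). For the interpolation step I would use H\"older's inequality in the form $\mE|Y_{t+\epsilon}|^2=\mE\big[|Y_{t+\epsilon}|^{2/3}|Y_{t+\epsilon}|^{4/3}\big]\le(\mE|Y_{t+\epsilon}|)^{2/3}(\mE|Y_{t+\epsilon}|^4)^{1/3}$, which rearranges to
\[
\mE|Y_{t+\epsilon}|\;\ge\;\frac{(\mE|Y_{t+\epsilon}|^2)^{3/2}}{(\mE|Y_{t+\epsilon}|^4)^{1/2}}\;\ge\;\frac{(\mE|Y_{t+\epsilon}|^2)^{3/2}}{\sqrt{C}\,\epsilon}
\]
by (\ref{momentest}). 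Thus, once I establish $\mE|Y_{t+\epsilon}|^2\ge\frac{\sigma}{2}\epsilon$ for all small $\epsilon$, I obtain $\mE|Y_{t+\epsilon}|/\epsilon\ge(\sigma/2)^{3/2}C^{-1/2}\epsilon^{-1/2}\to\infty$, which is exactly the assertion.

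To produce the lower bound on the second moment I would start from the canonical decomposition $Y=M+A$ on $[t,t+\epsilon]$, with $M$ a continuous local martingale, $A$ continuous of finite variation, $M_t=A_t=0$, and $\langle Y\rangle=\langle M\rangle$, and apply It\^o's formula to $Y^2$:
\[
Y_{t+\epsilon}^2=2\int_t^{t+\epsilon}Y_s\,\md M_s+2\int_t^{t+\epsilon}Y_s\,\md A_s+\langle Y\rangle_{t+\epsilon}.
\]
Since (\ref{nondege}) forces $\mE\langle Y\rangle_{t+\epsilon}<\infty$ for $\epsilon$ small, $M$ is a genuine square-integrable martingale on $[t,t+\epsilon]$; localizing along a reducing sequence, using (\ref{momentest}) for the uniform integrability of $Y_{t+\epsilon}^2$ and monotone convergence for $\langle Y\rangle$, I may take expectations and drop the martingale integral, leaving $\mE Y_{t+\epsilon}^2=2\mE\int_t^{t+\epsilon}Y_s\,\md A_s+\mE\langle Y\rangle_{t+\epsilon}$. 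The crucial point is that the drift term is negligible at order $\epsilon$: writing $\md A_s=b_s\,\md s$ with $b$ (locally) bounded, $\|b\|_\infty\le K$ — precisely the situation in which this lemma is applied, namely $Y=X^k-p^k$ — and using (\ref{momentest}) with $\epsilon$ replaced by $s-t$ together with Jensen,
\[
\Big|\mE\int_t^{t+\epsilon}Y_s\,\md A_s\Big|\le K\int_t^{t+\epsilon}\mE|Y_s|\,\md s\le K\int_t^{t+\epsilon}(\mE|Y_s|^4)^{1/4}\,\md s\le KC^{1/4}\int_t^{t+\epsilon}(s-t)^{1/2}\,\md s=O(\epsilon^{3/2}).
\]
Hence $\mE Y_{t+\epsilon}^2=\mE\langle Y\rangle_{t+\epsilon}+O(\epsilon^{3/2})$, and (\ref{nondege}) then gives $\mE Y_{t+\epsilon}^2\ge\frac{\sigma}{2}\epsilon$ for all sufficiently small $\epsilon$.

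Feeding this into the interpolation inequality above yields $\mE|Y_{t+\epsilon}|\ge(\sigma/2)^{3/2}C^{-1/2}\epsilon^{1/2}$ for small $\epsilon$, so that $\mE|Y_{t+\epsilon}|/\epsilon\to\infty$, as claimed. I expect the main obstacles to be the two measure-theoretic steps: that the stochastic integral against $M$ has zero expectation (dealt with by localization plus the moment bound (\ref{momentest})), and, more substantively, that the finite-variation part of $Y$ contributes only $O(\epsilon^{3/2})$ — this is the place where one really leans on the concrete form of $Y$ arising in the application (absolute continuity of $A$ with bounded density) rather than on the two hypotheses as literally stated; everything else is a one-line computation.
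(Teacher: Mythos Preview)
Your argument is correct and self-contained, but it takes a genuinely different route from the paper's own proof. The paper does not use H\"older interpolation; instead it truncates. For any $\delta>0$ it writes
\[
\frac{\mE|Y_{t+\epsilon}|}{\epsilon}\;\ge\;\frac{1}{\epsilon}\,\mE\!\left[|Y_{t+\epsilon}|\,I_{\{|Y_{t+\epsilon}|\le\delta\}}\right]\;\ge\;\frac{1}{\delta\epsilon}\,\mE\!\left[|Y_{t+\epsilon}|^2\,I_{\{|Y_{t+\epsilon}|\le\delta\}}\right],
\]
then uses Cauchy--Schwarz together with (\ref{momentest}) and continuity in probability to show that the tail piece $\mE[|Y_{t+\epsilon}|^2I_{\{|Y_{t+\epsilon}|>\delta\}}]/\epsilon\to 0$, so the right-hand side tends to $\sigma/\delta$. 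Letting $\delta\downarrow 0$ gives the conclusion. Both proofs hinge on the same intermediate fact, namely $\mE|Y_{t+\epsilon}|^2/\epsilon\to\sigma$; the paper simply asserts this equals $\mE\langle Y\rangle_{t+\epsilon}/\epsilon$, whereas you justify it carefully via It\^o's formula and the bounded-drift assumption. In that sense your write-up is more honest about where the hypotheses as literally stated fall short and the concrete structure of $Y=X^k-p^k$ is actually used. What your approach buys in addition is a quantitative rate $\mE|Y_{t+\epsilon}|/\epsilon\gtrsim\epsilon^{-1/2}$; what the paper's truncation buys is brevity, since no interpolation exponent bookkeeping is needed and the $\delta\to 0$ limit delivers $+\infty$ directly.
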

\begin{proof}
  For any $\delta >0$, we have
  \begin{equation}
  \label{est8}
  \begin{aligned}
    \mE|Y_{t+\epsilon}|/\epsilon & \geq \mE|Y_{t+\epsilon}|I_{\{|Y_{t+\epsilon}|\leq \delta\}}/\epsilon \\
     & \geq \frac{1}{\delta\epsilon}\mE |Y_{t+\epsilon}|^2I_{\{|Y_{t+\epsilon}|\leq \delta\}}.
     \end{aligned}
  \end{equation}
  Because $Y$ is pathwise continuous, it is continuous in probability. As such, using (\ref{momentest}), we have
  \begin{align*}
    \mE|Y_{t+\epsilon}|^2I_{\{|Y_{t+\epsilon}|> \delta\}}/\epsilon & \leq \frac{1}{\epsilon}(\mE|Y_{t+\epsilon}|^4)^{1/2}\mP(|Y_{t+\epsilon}|\geq \delta)^{1/2} \\
    & \leq C\mP(|Y_{t+\epsilon}|\geq \delta)^{1/2} \\
    & =o(1),
  \end{align*}
  as $\epsilon\to 0$. Thus,
  \[
  \lim_{\epsilon\to 0}\mE |Y_{t+\epsilon}|^2I_{\{|Y_{t+\epsilon}|\leq \delta\}}/\epsilon = \lim_{\epsilon\to 0}\mE |Y_{t+\epsilon}|^2/\epsilon=\lim_{\epsilon\to 0}\mE \langle Y\rangle_{t+\epsilon}/\epsilon=\sigma>0.
  \]
  Letting $\epsilon\to 0$ on both sides of (\ref{est8}), we have
  \[
  \liminf_{\epsilon\to 0}\mE|Y_{t+\epsilon}|/\epsilon \geq \frac{\sigma}{\delta},\forall \delta>0.
  \]
  Letting $\delta \to 0$ gives the desired result.
\end{proof}

We are now ready to give the proof of sufficiency part of Theorem \ref{boundary}.
\begin{proof}[Proof of Theorem \ref{boundary}, (1)]
   Based on Lemma \ref{boundlimsup}, it is sufficient to prove that if (\ref{deltabound}) and (\ref{boundarysc}) are ture, then (\ref{sys3}) holds. Pick a sequence of $\epsilon_k\to 0$, such that $\lim_{k\to \infty} \triangle(t,x;\epsilon_k)=\limsup_{\epsilon \to 0}\triangle(t,x;\epsilon)$. For simplicity, we still denote this limit by $\epsilon \to 0$. Fix $\delta>0$ for now. For each $\epsilon$, using continuity of $\triangle(\cdot,\cdot;\epsilon)$, we find a $(s_{\epsilon},y_{\epsilon})\in B_{\delta}$ such that
  \[
  \triangle(s_{\epsilon},y_{\epsilon};\epsilon)-\frac{\delta}{d((s_{\epsilon},y_{\epsilon}),\partial C)+\epsilon} = \sup_{(s,y)\in B_{\delta}}\left[\triangle(s,y;\epsilon)-\frac{\delta}{d((s,y),\partial C)+\epsilon}     \right].
  \]
  Picking a subsequence, we assume $(s_{\epsilon},y_{\epsilon})\to (s_{\delta},y_{\delta})$. If $(s_{\delta},y_{\delta})\in \partial C$, fixing another $(s_{\delta}',y_{\delta}')\in B_{\delta}\backslash \partial C$ such that $d_{\delta}\triangleq d((s_{\delta}',y_{\delta}'),\partial C) >0 $, letting $\epsilon \to 0$ on both sides of
  \[
  \triangle(s_{\delta}',y_{\delta}';\epsilon)-\frac{\delta}{\epsilon+d_{\delta}}\leq \triangle(s_{\epsilon},y_{\epsilon};\epsilon)-\frac{\delta}{\epsilon+d((s_{\epsilon},y_{\epsilon}),\partial C)},
  \]
  and using the boundedness of $\limsup_{\epsilon \to 0}\triangle(s_{\epsilon},y_{\epsilon};\epsilon)$ from (\ref{deltabound}), we have \[A^{\hat{u}}f(s'_{\delta},y'_{\delta})-\frac{\delta}{d_{\delta}}\leq -\infty,
  \]
  yielding a contradiction. Thus, we can choose a $h_{\delta}>0$ sufficiently small, such that
  \[
  \sup_{(s,y)\in B_{\delta}}\left[\triangle(s,y;\epsilon)-\frac{\delta}{d((s,y),\partial C)+\epsilon}     \right]=\sup_{(s,y)\in B_{\delta}\backslash D_{h_{\delta}}}\left[\triangle(s,y;\epsilon)-\frac{\delta}{d((s,y),\partial C)+\epsilon}     \right],
  \]
  where
  \[
  D_{\eta}\triangleq \{ (s,y):d((s,y),\partial C)< \eta, \ \ \eta >0\}.
  \]
  Using Lemma \ref{uniformconvergence}, we conclude
  \begin{eqnarray*}
    &&\lim_{\epsilon \to 0}\sup_{(s,y)\in B_{\delta}}\left[\triangle(s,y;\epsilon)-\frac{\delta}{d((s,y),\partial C)+\epsilon}     \right]\\
    &=&\lim_{\epsilon \to 0}\sup_{(s,y)\in B_{\delta}\backslash D_{h_{\delta}}}\left[\triangle(s,y;\epsilon)-\frac{\delta}{d((s,y),\partial C)+\epsilon}     \right]  \\
    &=& \sup_{(s,y)\in B_{\delta}\backslash D_{h_{\delta}}}[A^{\hat{u}} f(s,y)-\frac{\delta}{d((s_{\delta},y_{\delta}),\partial C)}] \\
     & \leq&  \sup_{(s,y)\in B_{\delta}\backslash \partial C}A^{\hat{u}}f(s,y).
  \end{eqnarray*}
  As such, using (\ref{boundarysc}), we have
  \begin{eqnarray*}
  &&\lim_{\delta \to 0}\lim_{\epsilon \to 0}\sup_{(s,y)\in B_{\delta}}\left[\triangle(s,y;\epsilon)-\frac{\delta}{d((s,y),\partial C)+\epsilon}     \right]\\
  &\leq &  \lim_{\delta \to 0}\sup_{(s,y)\in B_{\delta}\backslash \partial C}A^{\hat{u}}f(s,y) \\
         &\leq & \limsup_{(s,y)\notin \partial C,(s,y)\to (t,x)}A^{\hat{u}}f(s,y) \\
         &\leq  &0.
  \end{eqnarray*}
  On the other hand, for any $\delta >0$,
  \begin{align*}
  \lim_{\epsilon \to 0}\triangle(t,x;\epsilon) &= \lim_{\epsilon \to 0}\lim_{\delta \to 0}\left[\triangle(t,x;\epsilon)-\frac{\delta}{\epsilon}\right]  \\
  &\leq \lim_{\epsilon \to 0} \lim_{\delta \to 0}\sup_{(s,y)\in B_{\delta}}\left[\triangle(s,y;\epsilon)-\frac{\delta}{d((s,y),\partial C)+\epsilon}     \right] \\
  &\leq \lim_{\epsilon \to 0}\sup_{(s,y)\in B_{\delta}}\left[\triangle(s,y;\epsilon)-\frac{\delta}{d((s,y),\partial C)+\epsilon}     \right].
  \end{align*}
  Letting $\delta \to 0$ on the right hand side of the last inequality gives
  \[
  \lim_{\epsilon \to 0}\triangle(t,x;\epsilon)\leq 0,
  \]
completing the proof.
\end{proof}
{ 
To provide a proof of necessity part of Theorem \ref{boundary}, we need to invoke a local time formula on surfaces (c.f., \citet*{Peskir2007}) because no $C^1$ regularity of $f$ across $\partial C$ is guaranteed and thus standard It$\hato$'s formula is not applicable. However, local time formula needs the boundary to be the graph of a function with certain properties. In our work, it is sufficient to express $\partial C$ locally as graph of a smooth function, based on the prescribed regularity and implicit function theorem. However this can only be done along directions not in tangent space. To resolve this we first propose a weaker version of smooth fitting, prove that it is a necessary condition of (\ref{sys3}) and finally show that it is equivalent to (\ref{spatialfitting}). To this end, we need to introduce some notations. For $(t,x)=(t,x^1,x^2,\cdots,x^n)\in E\subset \mR^{n+1}$, we write $(t,x)=te_t+\sum_{i=1}^n x^i e_i$, where $\{e_t,e_1,\cdots,e_n\}$ is canonical orthogonal basis. For $(t,x)\in \partial C$, denote the unit normal vector by $n(t,x)$. Moreover, for $(t,x)\in \partial C$, we define:
\begin{align*}
	&\mK(t,x)\triangleq \{k=1,2,\cdots,n: \langle e_k,n(t,x) \rangle \neq 0  \}, \\
	&\mathring{\partial C}\triangleq \{(t,x)\in \partial C:\mK(t,x)\neq \varnothing      \}.
\end{align*}
We propose another version of smooth fitting:
\begin{equation}\label{smoothfit}
	\frac{\partial f}{\partial e_k}(t,x,\cdot,\cdot)\big|_{(t,x)}=\frac{\partial g}{\partial e_k}(t,x,\cdot,\cdot)\big|_{(t,x)},\forall (t,x)\in \mathring{\partial C},k\in \mK(t,x).
\end{equation}
\begin{lemma}\label{SFequivalent}
	If (\ref{fcond}) and (\ref{fcon2}) hold, then (\ref{smoothfit}) is equivalent to (\ref{spatialfitting}).
\end{lemma}
\begin{proof}
	Because (\ref{spatialfitting}) $\Longrightarrow$ (\ref{smoothfit}) is by definition, we only prove (\ref{smoothfit}) $\Longrightarrow$ (\ref{spatialfitting}). To this end, assume $f_x(t,x,\cdot,\cdot)\big|_{(t,x)}=g_x(t,x,\cdot,\cdot)\big|_{(t,x)}$ for some $(t,x)\in \partial C$. We fix this $(t,x)$ from now on, and denote $F(t',x')=\tilde{f}(t,x,t',x')-g(t,x,t',x')$. From the assumption we know $\nabla F\neq 0$ at $(t,x)$. Using implicit function theorem, there exists a small ball $B$ such that $\{F=0\}\cap B$ is a $C^1$ hyper-surface and $n(t,x)= \nabla F(t,x)\}$. Thus we have for each $k\notin \mK(t,x)$, $\langle e_k,\nabla F(t,x)\rangle=0$, which is to say, $\frac{\partial \tilde{f}}{\partial e_k}(t,x,\cdot,\cdot)\big|_{(t,x)}= \frac{\partial g}{\partial e_k}(t,x,\cdot,\cdot)\big|_{(t,x)}$. Because $\nabla F(t,x)\neq 0$, there is at least one $k_0$ such that $\frac{\partial \tilde{f}}{\partial e_{k_0}}(t,x,\cdot,\cdot)\big|_{(t,x)}\neq  \frac{\partial g}{\partial e_{k_0}}(t,x,\cdot,\cdot)\big|_{(t,x)}$. Therefore we conclude $k_0\in \mK(t,x)$ and thus (\ref{smoothfit}) fails. This proves the implication (\ref{spatialfitting}) $\Longrightarrow$ (\ref{smoothfit}).
\end{proof}
Based on Lemma \ref{SFequivalent}, to show necessity in Theorem \ref{boundary}, we only need to show  (\ref{sys3})$\Longrightarrow$ (\ref{smoothfit})}.
\begin{proof}[Proof of Theorem \ref{boundary}, (2)]
We first claim that for any $(t,x)\in \partial C$, $k\in \mK(t,x)$, $\frac{\partial f}{\partial e_k}(t,x+)\geq\frac{\partial f}{\partial e_k}(t,x-)$. Indeed, based on regularity of $\partial C$ and definition of $\mK(t,x)$, there exists $h_0>0$ such that $(t,x)+he_k\in C$ and $(t,x)-he_k\in \intt(D)$, $\forall 0<h<h_0$, or $(t,x)+he_k\in \intt(D)$ and $(t,x)-he_k\in C$, $\forall 0<h<h_0$. If the former holds, for $h$ small enough, because $f=g$ in $\intt(D)$, we have
\begin{align*}
	\frac{\partial f}{\partial e_k}(t,x+)&=\lim_{h\to 0}(f((t,x)+he_k)-f(t,x))/h \\
	&\geq \lim_{h\to 0}(g((t,x)+he_k)-g(t,x))/h  \\
	&=\frac{\partial g}{\partial e_k}(t,x+) \\
	&=\frac{\partial g}{\partial e_k}(t,x-) \\
	&=\frac{\partial f}{\partial e_k}(t,x-).
\end{align*}
If the latter holds, for $h$ small enough,
\begin{align*}
	\frac{\partial f}{\partial e_k}(t,x+)&=\frac{\partial g}{\partial e_k}(t,x+) \\
	&=\frac{\partial g}{\partial e_k}(t,x-)  \\
	&=\lim_{h\to 0}(g((t,x)-he_k)-g(t,x))/(-h) \\
	&\geq \lim_{h\to 0}(f((t,x)-he_k)-f(t,x))/(-h) \\
	&=\frac{\partial f}{\partial e_k}(t,x-).
\end{align*}
Moreover, we conclude from the above argument that if $\frac{\partial f}{\partial e_k}(t,x+)=\frac{\partial f}{\partial e_k}(t,x-)$, then $\frac{\partial f}{\partial e_k}(t,x)=\frac{\partial g}{\partial e_k}(t,x)$. To prove (2) of Theorem \ref{boundary}, it is sufficient to obtain contradiction if we assume $\frac{\partial f}{\partial e_k}(t,x+)>\frac{\partial f}{\partial e_k}(t,x-)$ for some $(t,x)\in \mathring{\partial C}$, $k\in \mK(t,x)$. Because $\partial C$ is $C^2$, by definition, it can be locally expressed by $\{P(s,y)=0  \}$ for some $C^2$ function $P$. If $k\in \mK(s,y)$, it is clear that $ \frac{\partial }{\partial e_k}P(s,y)\neq 0$. Based on implicit function theorem, $\partial C$ can be expressed locally near $B_{\delta}$ by $y^k = p(t,y^1,\cdots,y^{k-1},y^{k+1},\cdots,y^n)$, where $p$ is $C^2$ function defined in a neighbourhood of $B_{\delta}$. Writing $X=(X^1,\cdots,X^n)$, we define $p^k=p(\cdot, X^1,\cdots,X^{k-1},X^{k+1},\cdots,X^n)$, which is a continuous semimartingale by regularity of $p$. Now using Peskir's local time formula (see \citet*{Peskir2007}), we have
\begin{align}
	\ \ \ \ \ \ \  f^0(s+\epsilon,X_{s+\epsilon})-f^0(s,y)&= \frac{1}{2}\sum_{i=1}^n\int_s^{s+\epsilon}\left\{ \frac{\partial f^0}{\partial e_i}(r,\cdots,X^k_r+,\cdots,X^n_r) \right.\nonumber\\
	&\left.+\frac{\partial f^0}{\partial e_i}(r,\cdots,X^k_r-,\cdots,X^n_r)\right\}\md X^i_r \nonumber \\
	& +\frac{1}{2}\int_s^{s+\epsilon}\left\{ \frac{\partial f^0}{\partial t}(r+,X_r) +\frac{\partial f^0}{\partial t}(r-,X_r)\right\}\md r \nonumber\\
	& +\frac{1}{4}\sum_{i,j}\int_s^{s+\epsilon}\left\{   \frac{\partial^2 f^0}{\partial e_i\partial e_j}(r,\cdots,X^k_r+,\cdots,X^n_r) \right.\nonumber
	\end{align}
    \begin{align}
	&\left.+ \frac{\partial^2 f^0}{\partial e_i\partial e_j}(r,\cdots,X^k_r-,\cdots,X^n_r)\right\}\md \langle X^i,X^j \rangle_r \nonumber\\
	& + \frac{1}{2}\int_s^{s+\epsilon}\left\{\frac{\partial f^0}{\partial e_k}(r,\cdots,X^k_r+,\cdots,X^n_r) \right.\nonumber\\
	&\left. -\frac{\partial f^0}{\partial e_k}(r,\cdots,X^k_r-,\cdots,X^n_r)\right\}I_{\{X^k_r=p^k_r\}}\md L^{(s,y,X^k-p^k,0)}_{r},\label{localtimeformula}
\end{align}
where
\[
L^{(s,y,Y,0)}_t= \mP^{s,y}-\lim_{\eta\to 0}\frac{1}{2\eta}\int_s^t I_{\{|Y_r|< \eta\}}\md \langle Y\rangle_r
\]
is the local time of semimartingale $Y$ at the point 0. For simplicity, we write $\frac{\partial f}{\partial e_k}(s,y^1,\cdots,y^k+,\cdots,y^n)= \frac{\partial f}{\partial e_k}(s,y+)$, and $\frac{\partial f}{\partial e_k}(s,y-)$ is defined similarly. Taking expectation with respect to $\mE^{s,y}$, other terms, except the last term, are of order $O(\epsilon)$, uniformly for $(s,y)\in B_{\delta}$, because $f_t$, $f_x$ and $f_{xx}$ are locally bounded, and the $\md W$ term in $\md X^i_r$ is martingale. As $\frac{\partial \tilde{f}}{\partial e_k}$ and $\frac{\partial g}{\partial e_k}$ are continuous, if we choose $\delta$ sufficiently small, we can assume $\frac{\partial f}{\partial e_k}(s,y+)-\frac{\partial f}{\partial e_k}(s,y-)\geq \delta_0>0$ for $(s,y)\in B_{\delta}\cap \partial C$. Therefore,
\begin{align}
	&\mE^{t,x}\int_t^{t+\epsilon}(\frac{\partial f^0}{\partial e_k}(r,X_r+)-\frac{\partial f^0}{\partial e_k}(r,X_r-))I_{\{X^k_r=p^k_r\}}\md L^{(t,x,X^k-p^k,0)}_{r}\nonumber \\&= \mE^{t,x}\int_t^{t+\epsilon}(\frac{\partial f}{\partial e_k}(r,X_r+) -\frac{\partial f}{\partial e_k}(r,X_r-))\chi(r,X_r)I_{\{X^k_r=p^k_r\}}\md L^{(t,x,X^k-p^k,0)}_{r}\nonumber\\
	& \geq \delta_0 \mE^{t,x}L^{(t,x,X^k-p^k,0)}_{t+\epsilon}-\int_t^{t+\epsilon}|1-\chi(r,X_r)|\md L^{(t,x,X^k-p^k,0)}_{r}.\label{infest1}
\end{align}
Here we have used the identity
\begin{align*}
	\frac{\partial f^0}{\partial e_k}(s',y'+)-\frac{\partial f^0}{\partial e_k}(s',y'-) &= (\frac{\partial f}{\partial e_k}(s',y'+)-\frac{\partial f}{\partial e_k}(s',y'-))\chi \\
	&+(\frac{\partial \chi}{\partial e_k}(s',y'+)-\frac{\partial \chi}{\partial e_k}(s',y'-))f \\
	&=(\frac{\partial f}{\partial e_k}(s',y'+)-\frac{\partial f}{\partial e_k}(s',y'-))\chi.
\end{align*}
On the other hand, because $\chi=1$ on $B_{\delta}$ and $d((s',y'),(s,y))<\delta/4,d((t,x),(s,y))<\delta/4 \Longrightarrow d((s',y'),(t,x))<\delta/2\Longrightarrow f^0=f,f^0_x=f_x,f^0_{xx}=f_{xx}$ at $(s',y')$, the last term in (\ref{infest1}) is not equal to 0 only if $\sup_{s\leq r\leq s+\epsilon}\|X_r-y   \|\geq \delta/4$. Therefore, it is bounded by
\begin{align*}
	C \mE^{s,y}L^{s,y,X^k-p^k,0}_{s+\epsilon}I_{\{\sup_{s\leq r\leq s+\epsilon}\|X_r-y   \|\geq \delta/4\}} & \leq C(\mE^{s,y}(L^{s,y,X^k-p^k,0}_{s+\epsilon})^2)^{1/2}\mP^{s,y}(\sup_{s\leq r\leq s+\epsilon}\|X_r-y   \|\geq \delta/4) ^{1/2} \\
	&\leq C(\mE^{s,y}(L^{s,y,X^k-p^k,0}_{s+\epsilon})^2)^{1/2}o(\epsilon).
\end{align*}
As a conclusion,
\begin{equation}\label{infest1'}
	\mE^{t,x}\int_t^{t+\epsilon}(\frac{\partial f^0}{\partial e_k}(r,X_r+)-\frac{\partial f^0}{\partial e_k}(r,X_r-))I_{\{X^k_r=p^k_r\}}\md L^{(t,x,X^k-p^k,0)}_{r}\geq \delta_0 \mE^{t,x}L^{(t,x,X^k-p^k,0)}_{t+\epsilon}-o(\epsilon).
\end{equation}
After direct calculation, for the semimartingale $Y=X^k-p^k$, we have
\[
\md Y_r=Q_r\md r+[\Lambda^k(r,X_r)-\sum_{l\neq k}\frac{\partial p}{\partial x_l}(r,X_r)\Lambda^l(r,X_r)]\md W_r,
\]
where $Q_r$ is induced by the first order terms in the It$\hato$'s formula, and is irrelevant to the proof.
Thus,
\[
\lim_{\epsilon\to 0}\mE\langle Y \rangle_{t+\epsilon}/\epsilon =\|\Lambda^k(t,x)-\sum_{l\neq k}\frac{\partial p}{\partial x_l}(t,x)\Lambda^l(t,x)\|^2.
\]
Note that $\Lambda^k(t,x)-\sum_{l\neq k}\frac{\partial p}{\partial x_l}(t,x)\Lambda^l(t,x)=\Lambda(t,x)\xi$, where $\xi_k=1$, hence $\xi\neq 0$. Using nondegenerency of $\Lambda$, we conclude that $Y$ satisfies (\ref{nondege}). Clearly it also satisfies (\ref{momentest}) by similar moment estimates of $X$. Based on Lemma \ref{inflm},
\begin{equation}
	\label{infest2}
	\lim_{\epsilon\to 0}\mE^{t,x}|X^k_{t+\epsilon}-p^k_{t+\epsilon}|/\epsilon=\infty.
\end{equation}
On the other hand, using It$\hato$-Tanaka's formula,
\[
|X^k_{t+\epsilon}-p^k_{t+\epsilon}|=\int_t^{t+\epsilon}\sgn(X^k_r-p^k_r)\md(X^k_r-p^k_r)+L^{(t,x,X^k-p^k,0)}_{t+\epsilon}.
\]
Thus, we have
\[
\mE^{t,x}L^{(t,x,X^k-p^k,0)}_{t+\epsilon}/\epsilon=\mE^{t,x}|X^k_{t+\epsilon}-p^k_{t+\epsilon}|/\epsilon+O(1).
\]
Combining (\ref{localtimeformula}), (\ref{infest1'}), (\ref{infest2}) and the above relation, we get
\[
\limsup_{\epsilon\to 0}\frac{\mE^{t,x}[f(t+\epsilon,X_{t+\epsilon})-f(t,x)]}{\epsilon}=\infty,
\]
which is contradicted to (\ref{sys3}).	\qedhere
\end{proof}

\section{On the assumptions (\ref{fcond}) and (\ref{fcon2})}
\label{fconddiscussion}
In some specific example where the analytical form is attainable (as in Section \ref{exm}), the assumptions (\ref{fcond}) and (\ref{fcon2}) can be verified directly. In this appendix, we also show that they are generally not very restrictive assumptions, and are satisfied once $\partial C$ and $g$ have sufficient regularity. We choose to use the theory of linear parabolic PDE to show the regularity of $f$, instead of the pure probabilistic approach as in \citet*{Friedman1975} and \citet*{He2019}. The main reason is that, their probabilistic approach is applicable based on the observation that the terminal time $T$ is fixed in the control setting. Because we consider stopping-control problems, the regular dependence of stopping times to the state variables shall also be considered, which makes the probabilistic approach quite complicated. Before proceeding, we first introduce and recall some notations that will be used in this appendix, as well as further assumptions to $g$ and $C$.

In this appendix, we will fix an arbitrary $(s,y)\in E$ and simply denote $f=f(s,y,\cdot,\cdot)$ as well as $g=g(s,y,\cdot,\cdot)$. We will also suppress any thing about $\hat{u}$ because it is fixed. For simplicity, we only consider $\mX=\mR^n$. We define the parabolic ball $B_{\delta}(t,x)=(t,t+\delta^2)\times \{x':\|x-x'\|<\delta\}$, and for any domain $\Omega\subset \mR^{n+1}$, define its parabolic boundary $\Pc \Omega$ by the points $(t,x)$ on $\partial \Omega$ (which is the topological boundary) such that for any $\delta>0$, $B_{\delta}(t,x)$ has points that are not in $\Omega$ (see \citet*{Lieberman1986a} for some specific examples). Let $B_k=\{ \|x\|<k \}\subset \mR^n$ and $E_k=[0,T)\times B_k$. We also use the standard notations for functions and surfaces with H$\ddo$lder continuity $C^{k+\alpha}$ for integer $k\geq 0$ and $\alpha\in (0,1)$. Specific definitions of these spaces are lengthy and far away from the topic of this paper. We refer readers to \citet*{Lieberman1996}. Now we are ready to state the assumptions that are needed in this appendix.
\begin{assumption}
\label{appasump}
  \begin{itemize}
    \item[(1)] $\Pc C\in C^{2+\alpha}$, $g\in C_{\rm loc}^{2+\alpha}(E)\cap L^{\infty}_{\rm poly}$.
    \item[(2)] For any sufficiently large integer $n$, there exists bounded domain $C_k\subset E_{2k}$, $\Pc C_k\in C^{2+\alpha}$ and $C_k \cap E_k= C \cap E_k$.
    \item[(3)] { Under $\hat{u}$, the operator $A$ is uniformly parabolic in $E$ and all its coefficients ($\Lambda$ and $\Theta$) have bounded weighted H$\ddo$lder seminorms in $E$ (see (5.15) on pages 94 in \citet*{Lieberman1996} for specific definition\big). Moreover, there is modular of continuity function $\zeta$ such that}
        \[
        \|\Lambda(t,x)-\Lambda(t',x')\|\leq \zeta(d((t,x),(t',x'))),\forall (t,x),(t',x')\in E
        \]   
  \end{itemize}
\end{assumption}
\begin{remark}
  Assumption \ref{appasump} is not restrictive. (1) and (3) are standard assumptions in PDE theory. (2) is true if $C$ is itself bounded or diffeomorphic to half-space. See Figure \ref{appendixfigure} for a graphical illustration.
\end{remark}
\begin{figure}[!htbp]
    \centering
    \def\svgwidth{\columnwidth}
    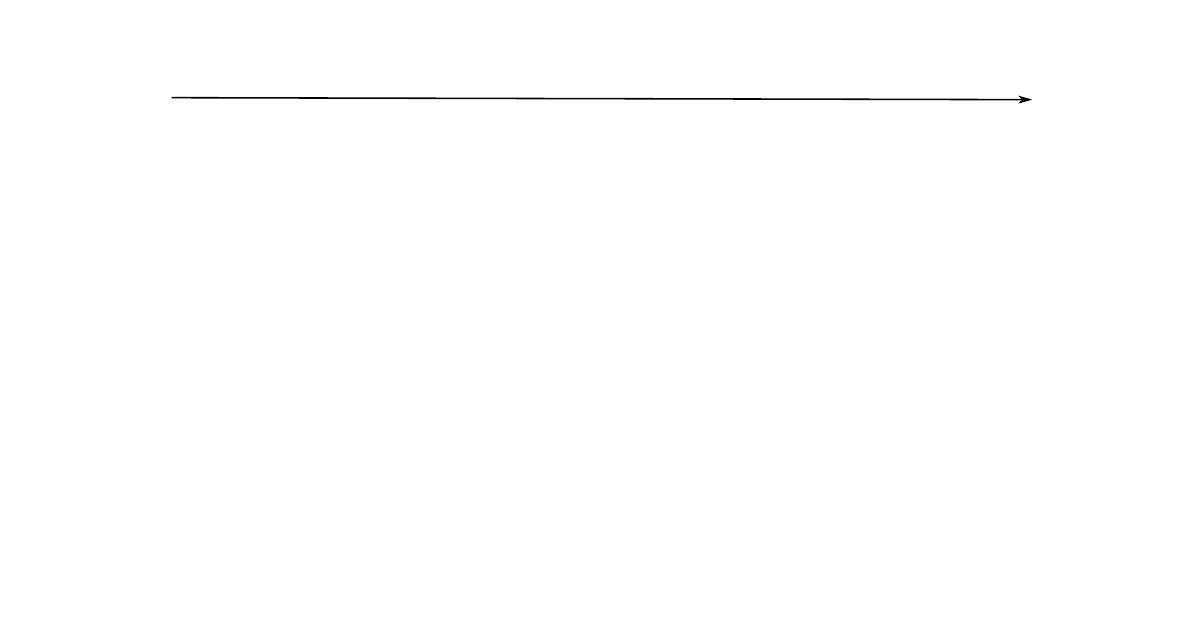
    \caption{An example of (2) in Assumption \ref{appasump}.}
    \label{appendixfigure}
\end{figure}
Before proceeding with the regularity of $f$, we first cite from \citet*{Lieberman1996} one classical result from theory of linear parabolic PDE on general bounded non-cylindrical domain.
\begin{lemma}[Theorem 5.15 from \citet*{Lieberman1996}]
  \label{parapde}
  Under Assumptions \ref{assumption1} and \ref{appasump}, there exists $f^k\in C^{1+\alpha,2+\alpha}(C_k)$ solving the following initial boundary value problem:
  \begin{equation}\label{initialboundary}
    \left\{
    \begin{array}{ll}
      Af^k=0, & {\rm in\ }C_k \\
      f^k=g, &  {\rm on\ }\Pc C_k.
    \end{array}
    \right.
  \end{equation}
\end{lemma}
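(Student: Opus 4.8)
The plan is to treat the statement as an instance of the classical Schauder existence-and-regularity theory for linear parabolic equations on bounded non-cylindrical domains, so that the work consists only in checking that Assumptions~\ref{assumption1} and~\ref{appasump} furnish every hypothesis required by Theorem~5.15 of \citet*{Lieberman1996}. First I would record that on the bounded domain $C_k\subset E_{2k}$ the operator $A=A^{\hat{u}}$ is a linear, non-divergence-form, second-order operator with drift $\Theta^{\hat{u}}$ and diffusion matrix $\Lambda^{\hat{u}}(\Lambda^{\hat{u}})^{\mt}$, and no zeroth-order term. By Assumption~\ref{appasump}(3) it is uniformly parabolic in $C_k$, its coefficients $\Theta^{\hat{u}},\Lambda^{\hat{u}}$ have finite weighted H$\ddo$lder seminorms there, and $\Lambda^{\hat{u}}$ admits a modulus of continuity on $C_k$ --- exactly the structural input the cited theorem asks for; Assumption~\ref{assumption1} supplies the underlying global Lipschitz and boundedness properties that make the restriction to $C_k$ legitimate.

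Next I would verify the geometric and boundary-data hypotheses. By Assumption~\ref{appasump}(2) the set $C_k$ is a bounded domain whose parabolic boundary $\Pc C_k$ is of class $C^{2+\alpha}$, which is precisely the boundary regularity under which the non-cylindrical Schauder estimates hold. The prescribed boundary value $g$ lies in $C^{2+\alpha}_{\rm loc}(E)$ by Assumption~\ref{appasump}(1), hence is $C^{2+\alpha}$ in a neighbourhood of $\overline{C_k}$. Because a \emph{single} function $g$ is imposed on the entire parabolic boundary $\Pc C_k$ --- there is no separate lateral datum competing with an initial datum --- the zeroth-order compatibility at the parabolic corner is automatic, while the higher-order compatibility reduces to the requirement that $Ag$ be H$\ddo$lder continuous near that corner, which follows from $g\in C^{2+\alpha}_{\rm loc}$ and the coefficient regularity above.

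With all the hypotheses in place, Theorem~5.15 of \citet*{Lieberman1996} produces a function $f^k\in C^{1+\alpha,2+\alpha}(C_k)$ solving the initial-boundary value problem (\ref{initialboundary}); uniqueness --- which justifies the definite article --- is immediate from the parabolic maximum principle, since $A$ has no zeroth-order term. I expect the main obstacle to be purely a matter of bookkeeping rather than of analysis: one has to reconcile the function-space conventions used in Assumption~\ref{appasump} (weighted versus unweighted H$\ddo$lder seminorms, the exact meaning of $C^{1+\alpha,2+\alpha}$ and of $\Pc C_k\in C^{2+\alpha}$ on a non-cylindrical domain) with the precise hypotheses of the cited theorem, and confirm that the parabolic-corner compatibility conditions really are met by the data $g$ that we use.
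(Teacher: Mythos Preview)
Your proposal is correct and matches the paper's approach: the paper does not prove this lemma but simply cites it as Theorem~5.15 of \citet*{Lieberman1996}, so the only content is verifying that Assumptions~\ref{assumption1} and~\ref{appasump} supply the hypotheses of that theorem, which is precisely what you outline. Your additional bookkeeping about compatibility conditions and uniqueness via the maximum principle is reasonable elaboration on what the paper leaves implicit.
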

Combining Lemma \ref{parapde} with Theorem \ref{representation} (note that here $f^k$ is bounded), we have
\[
f^k(t,x)=\mE^{t,x}g(\tau_k,X_{\tau_k}),\forall (t,x)\in C_k,
\]
where $\tau_k=\inf\{t\leq s\leq T: (s,X_s)\notin C_k     \}$. Now we prove the main result of this appendix.
\begin{lemma}
\label{applm888}
  Fix $N_0$ large enough. For any compact $K\subset C_{N_0}$, $f^k\to f$ uniformly on $K$ as $k\to \infty$.
\end{lemma}
\begin{proof}
  For any $(t,x)\in K$, noting $C_k \cap E_k= C \cap E_k$, we have
  \begin{align*}
    |f(t,x)-f^k(t,x)| &\leq \mE^{t,x}|g(\hat{\tau},X_{\hat{\tau}})-g(\tau_k,X_{\tau_k})|\\
     &\leq \mE^{t,x}\sup_{t\leq s \leq T}|g(s,X_s)|I_{\{\hat{\tau}\neq \tau_k\}} \\
     &\leq C(1+\|x\|^{\alpha})\mP^{t,x}(\exists t\leq s\leq T,X_s\notin E_k) \\
     &\leq C(1+\|x\|^{\alpha})\mP^{t,x}(\sup_{t\leq s\leq T}\|X_s-x\| \geq k-|x|).
  \end{align*}
  Similar to the proof of Lemma \ref{lm2}, we have
  \[
  \sup_{(t,x)\in K}|f-f^k|\leq C_{K,\alpha,\beta} k^{-\beta},
  \]
  where $\alpha$ and $\beta>0$ are constants independent of $k$. The proof is then completed.
\end{proof}
{ From Lemma \ref{applm888} we know $f^k$ converge to $f$ on any compact subset of $C$. From the interior H$\ddo$lder estimate of parabolic equations (see Theorem 4.9 of \citet*{Lieberman1996}), we know that $[f^k]_{2+\alpha,K}$ are bounded. Here, we emphasize that the estimates are only dependent on the parabolic operator and the distances between $K$ and $C_k$. For the former dependence, (3) in Assumptions \ref{appasump} guarantees that they are uniform in $k$. For the latter, we can choose $k$ large enough and use $C\cap E_k=C_k\cap E_k$ to ensure dist$(K,\Pc C_k)=$dist$(K,\Pc C)$. Therefore the interior estimates are independent of $k$, thus $[f^k]_{2+\alpha,K}$ are uniformly bounded.  Arzala-Ascoli theorem then yields $\partial_tf^k$, $\partial_x f^k$ and $\partial_{xx}f^k$ are all uniformly converging.}
 We thus know $f\in C^{1,2}(K)$ for any compact $K\subset C_{N_0}$. By arbitrary of $K$ and $N_0$, we conclude $f\in C^{1,2}(C)$. On the other hand, $f\in L^{\infty}_{\rm poly}$ comes naturally from its definition and $g\in L^{\infty}_{\rm poly}$. Thus, we have proved that (\ref{fcond}) is satisfied under Assumptions \ref{assumption1} and \ref{appasump}. As for (\ref{fcon2}), from H$\ddo$lder continuity of $f^k$ in $C_k$ we know that $f^k$, its first order time derivatives and its space derivatives up to order 2 are all uniformly continuous in $C_k$. Therefore, elementary analysis shows that all these functions can be extended to the boundary $\partial C_k$. Then an application of Whitney's extension theorem \big(see e.g. \citet*{Whitney1934} and \citet*{Seeley1973}\big) implies that $f^k$ extends to $f^k\in C^{1,2}(E)$ (detailed proof could be lengthy, technical and irrelevant to the topic of the present paper). Now similar arguments as in the uniform convergence mentioned above implies that we can get a limit function $\tilde{f}\in C^{1,2}(E)$. It is clear that $\tilde{f}$ is an extension of $f$.
\section{Proof of Proposition \ref{exmconclusion}}
\label{proofexmcon}
This appendix provides proof of Proposition \ref{exmconclusion}, which is lengthy and relies on several technical lemmas. Throughout this appendix, we will implicitly assume the assumptions (\ref{x0assumption}), (\ref{hassumption1}) and (\ref{hassumption2}). To exclude degenerate case we also assume $h'>0$. As a preliminary, we prove the well-posedness of the notations $x^*$, $x_0^*$ as well as the right hand side of (\ref{hassumption2}).
\begin{lemma}
\label{applm1}
  (\ref{exmsmoothfit}) has a unique solution on $(0,\infty)$. Setting $h=0$, the following equation also has a unique solution on $(0,\infty)$:
  \begin{equation}\label{exmsmoothfith0}
    \tag{\ref{exmsmoothfit}'}
    \alpha = (ax+\alpha)e^{-a(x-k)}.
  \end{equation}
\end{lemma}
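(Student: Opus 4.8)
The plan is to recast each equation as the vanishing of a single scalar function on $(0,\infty)$ and to show that this function is strictly increasing up to a unique maximum and strictly decreasing afterwards, with a positive value at $0$ and limit $-\infty$ at $+\infty$; such a function has exactly one zero.

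For (\ref{exmsmoothfit}), divide by $\alpha$ --- which is strictly positive, and in fact lies in $(0,1)$, by (\ref{exmsolution}) --- and take logarithms, so that the equation becomes $G(x)=0$ on $(0,\infty)$ with
\[
G(x)\triangleq \log\!\Big(1+\frac{ax}{\alpha}\Big)-a\big(x-h(x)-k\big).
\]
One has $G(0)=ak>0$ since $h(0)=0$. Since $x-h(x)$ is nondecreasing with derivative $1-h'(x)\ge\tfrac12$ by (\ref{hassumption2}), we get $x-h(x)\ge x/2$, so the linear term dominates the logarithmic one and $G(x)\to-\infty$; by the intermediate value theorem $G$ has at least one zero. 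For uniqueness I would examine
\[
G'(x)=\frac{a}{ax+\alpha}-a\big(1-h'(x)\big):
\]
the first term is strictly decreasing in $x$, and $1-h'(x)$ is nondecreasing because $h''\le 0$ by (\ref{hassumption1}), hence $G'$ is strictly decreasing. Since $G'(0)=a\big(\tfrac1\alpha-1+h'(0)\big)>0$ (as $\tfrac1\alpha>1$ and $h'>0$), $G'$ changes sign exactly once, from $+$ to $-$; this is the desired unimodality, and together with $G(0)>0>G(+\infty)$ it pins down a single zero $x^*\in(0,\infty)$.

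Equation (\ref{exmsmoothfith0}) is the specialization $h\equiv 0$: it is equivalent to $G_0(x)=0$ with $G_0(x)=\log\big(1+\tfrac{ax}{\alpha}\big)-a(x-k)$, where now $G_0'(x)=a\big(\tfrac{1}{ax+\alpha}-1\big)$ is plainly strictly decreasing, positive for $x<\tfrac{1-\alpha}{a}$ and negative afterwards; with $G_0(0)=ak>0$ and $G_0(+\infty)=-\infty$ the same unimodality argument gives a unique $x_0^*\in(0,\infty)$. The one delicate point is the global behaviour, since (\ref{hassumption1})--(\ref{hassumption2}) are phrased on $(0,x^*)$: to keep the statement non-circular I would first produce $x^*$ as the smallest zero of $G$ using only continuity, $G(0)>0$ and $G(+\infty)=-\infty$, then run the monotonicity argument on $(0,x^*]$ to conclude it is the only zero there, and finally exclude further zeros on $(x^*,\infty)$ (where $\tfrac{a}{ax+\alpha}$ is small and $1-h'\ge 0$ by the standing bound $h'\le 1$), invoking the concavity/growth conditions in the global form that the $h$ of Proposition \ref{exmconclusion} satisfies if needed. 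The estimate that really carries weight is the divergence $G(x)\to-\infty$, secured by $h'<\tfrac12$; concavity (\ref{hassumption1}) is what drives uniqueness.
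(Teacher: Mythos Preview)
Your proof is correct and follows essentially the same route as the paper: define a scalar function whose zero is the solution, check positivity at $0$ and negativity at $+\infty$, and use $h''\le 0$ together with $h'<\tfrac12$ to obtain unimodality, which forces a unique zero. The paper works directly with $\psi(x)=-\alpha+(ax+\alpha)e^{-a(x-h(x)-k)}$, whereas you take logarithms and study $G(x)=\log(1+ax/\alpha)-a(x-h(x)-k)$; since $G'$ and $\psi'$ have the same sign this is only a cosmetic difference, and your derivative is arguably cleaner. One point worth noting: you correctly flag the circularity in that (\ref{hassumption1})--(\ref{hassumption2}) are stated only on $(0,x^*)$, while the unimodality argument seems to need them globally --- the paper simply uses these conditions without comment, so your caution here is well placed, though your proposed workaround at the end is still sketchy and would need to be made precise (or the hypotheses strengthened to hold on all of $(0,\infty)$).
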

\begin{proof}
  Consider $\psi(x)\triangleq -\alpha + (ax+\alpha)e^{-a(x-h(x)-k)}$. Direct calculation shows
  \[
  \psi'(x) = a(1-(1-h'(x))(ax+\alpha))e^{-a(x-h(x)-k)}.
  \]
  Clearly, for some $x'>0$, $\psi'>0$ on $(0,x')$ and $\psi'<0$ on $(x',\infty)$. Here we use the facts $h''(x)\leq 0$ and $h'(x)<\frac{1}{2}$. Now $\psi(0)=-\alpha+\alpha e^{ak}>0$, and
  \[
  x-h(x)= \int_0^x (1-h'(y))\md y \geq \frac{1}{2}x.
  \]
  Therefore, $\psi(x)\to -\alpha <0$ as $x\to \infty$. We conclude that there exists a unique $x^*$ such that $\psi(x^*)=0$. Simply letting $h=0$ in the same arguments gives the uniqueness of $x_0^*$.
\end{proof}
\begin{remark}
\label{psi0}
  As a byproduct of this proof, we have $\psi(x)>0$ on $(0,x^*)$.
\end{remark}
\begin{lemma}
  $x_0^*>\frac{2-\alpha}{a}$ if and only if $2e^{\alpha-2+ak}>\alpha$.
\end{lemma}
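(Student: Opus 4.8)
The plan is to reduce the claim to a single evaluation, at the point $\tfrac{2-\alpha}{a}$, of the function that already drives the proof of Lemma~\ref{applm1}. Setting $h\equiv 0$ there, one works with
\[
\psi_0(x)\triangleq -\alpha+(ax+\alpha)e^{-a(x-k)},\qquad x>0,
\]
whose unique zero on $(0,\infty)$ is $x_0^*$, by definition of (\ref{exmsmoothfith0}). First I would record the sign structure of $\psi_0$: the same differentiation as in Lemma~\ref{applm1} gives $\psi_0'(x)=a\bigl(1-(ax+\alpha)\bigr)e^{-a(x-k)}$, so $\psi_0$ is strictly increasing on $(0,(1-\alpha)/a)$ and strictly decreasing afterwards; together with $\psi_0(0)=\alpha(e^{ak}-1)>0$ and $\psi_0(x)\to-\alpha<0$ as $x\to\infty$, this forces the unique root $x_0^*$ to lie strictly to the right of the maximizer, with $\psi_0>0$ on $(0,x_0^*)$ (exactly as in Remark~\ref{psi0}) and $\psi_0<0$ on $(x_0^*,\infty)$. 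Consequently, for every $c>0$ one has the equivalence $x_0^*>c\iff\psi_0(c)>0$.

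Next I would apply this with $c=\tfrac{2-\alpha}{a}$, which is legitimate because $\alpha=\tfrac{2\beta}{2\beta+\mu^2/\sigma^2}\in(0,1)$ makes $\tfrac{2-\alpha}{a}>0$. The only computation to carry out is the cancellation $a\cdot\tfrac{2-\alpha}{a}+\alpha=2$, which yields
\[
\psi_0\!\left(\tfrac{2-\alpha}{a}\right)=-\alpha+\Bigl(a\cdot\tfrac{2-\alpha}{a}+\alpha\Bigr)e^{-a\left(\frac{2-\alpha}{a}-k\right)}=-\alpha+2e^{\alpha-2+ak}.
\]
Hence $x_0^*>\tfrac{2-\alpha}{a}$ if and only if $-\alpha+2e^{\alpha-2+ak}>0$, i.e.\ $2e^{\alpha-2+ak}>\alpha$, which is the assertion.

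I do not expect any genuine obstacle here. The only nontrivial ingredient is the unimodality of $\psi_0$, and that is already established (for general increasing $h$, hence a fortiori for $h\equiv 0$) inside the proof of Lemma~\ref{applm1}; everything else is the one-line evaluation above. If any point deserves a word of care, it is merely checking that $x_0^*$ lies strictly past the maximizer of $\psi_0$, so that $\psi_0$ is genuinely negative on $(x_0^*,\infty)$ and the equivalence $x_0^*>c\iff\psi_0(c)>0$ holds for all $c>0$ (not just for $c$ below the maximizer) — but this is immediate from $\psi_0(0)>0$ together with the increasing/decreasing structure.
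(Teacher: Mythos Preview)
Your proof is correct and follows essentially the same route as the paper: define $\psi_0(x)=-\alpha+(ax+\alpha)e^{-a(x-k)}$, use the unimodality from Lemma~\ref{applm1} to conclude that $x_0^*>c\iff\psi_0(c)>0$, and then evaluate at $c=\tfrac{2-\alpha}{a}$. You are simply more explicit about the sign structure of $\psi_0$ than the paper, which defers that step to the proof of Lemma~\ref{applm1}.
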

\begin{proof}
  Let $\psi_0(x)=-\alpha + (ax+\alpha)e^{-a(x-k)}$. From the proof of Lemma \ref{applm1} we know $x_0^*>\frac{2-\alpha}{a}$ if and only if $\psi_0(\frac{2-\alpha}{a})>0$, being equivalent to $2e^{\alpha-2+ak}>\alpha$.
\end{proof}
For convenience we denote
\[
F(x) = x^{\alpha}(1-e^{-a(x^*-h(x)-k)})-(x^*)^{\alpha}(1-e^{-a(x-h(x)-k)}).
\]
Then (\ref{fgeqg}) is equivalent to $F(x)\geq 0,\forall 0<x<x^*$. The proof of this statement will be accomplished by combining the following Lemmas \ref{applm10} and \ref{applm2}.
\begin{lemma}
\label{applm10}
   $F(x)> 0$ holds for $\frac{2-\alpha}{a}\leq x<x^*$.
\end{lemma}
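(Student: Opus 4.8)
The plan is to show that $F$ is strictly decreasing on $[\frac{2-\alpha}{a},x^*]$ and then read off the claim from $F(x^*)=0$. Two preliminary observations are in order. Substituting $x=x^*$ gives $F(x^*)=0$ directly, and a short computation combined with the smooth–fitting relation (\ref{exmsmoothfit}) — equivalently $\psi(x^*)=0$ for $\psi(x)=-\alpha+(ax+\alpha)e^{-a(x-h(x)-k)}$ — yields, after the cancellation $h'(x^*)+(1-h'(x^*))=1$,
\[
F'(x^*)=(x^*)^{\alpha-1}\bigl[\alpha-(ax^*+\alpha)e^{-a(x^*-h(x^*)-k)}\bigr]=-(x^*)^{\alpha-1}\psi(x^*)=0 .
\]
Hence $x^*$ is a \emph{double} zero of $F$; in particular the naive estimate available from Remark \ref{psi0} (namely $1-e^{-a(x-h(x)-k)}<\frac{ax}{ax+\alpha}$, together with $1-e^{-a(x^*-h(x)-k)}>\frac{ax^*}{ax^*+\alpha}$ coming from monotonicity of $h$ and smooth fitting) only controls first–order behaviour and, because $x^*>\frac{1-\alpha}{a}$, cannot close the gap on $[\frac{2-\alpha}{a},x^*)$. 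Genuine monotonicity of $F$ is therefore needed.

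To obtain it, set $G(x)\triangleq x^{1-\alpha}F'(x)$, so that $\sgn F'(x)=\sgn G(x)$ and, by the above, $G(x^*)=0$. Writing $v(x)=x^*-h(x)-k$ and $w(x)=x-h(x)-k$, one has the explicit form
\[
G(x)=\alpha\bigl(1-e^{-av(x)}\bigr)-a\,x\,h'(x)\,e^{-av(x)}-a\bigl(1-h'(x)\bigr)(x^*)^{\alpha}x^{1-\alpha}e^{-aw(x)} .
\]
I would then prove $G'>0$ on $[\frac{2-\alpha}{a},x^*]$. Since $G(x^*)=0$, this forces $G<0$ on $[\frac{2-\alpha}{a},x^*)$, hence $F'<0$ there, hence $F$ is strictly decreasing on $[\frac{2-\alpha}{a},x^*]$, hence $F(x)>F(x^*)=0$ for every $x\in[\frac{2-\alpha}{a},x^*)$. (The interval is nonempty: since $h\ge 0$ one has $\psi\ge\psi_0$ pointwise, so the zeros obey $x^*\ge x_0^*$, and $x_0^*>\frac{2-\alpha}{a}$ is exactly (\ref{x0assumption}).)

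The hard part — the step I expect to be the main obstacle — is the inequality $G'(x)>0$ on $[\frac{2-\alpha}{a},x^*]$. Differentiating $G$ and collecting terms over the two exponential weights $e^{-av(x)}$ and $(x^*)^{\alpha}x^{-\alpha}e^{-aw(x)}$ produces bracketed expressions in $h'$, $h''$ and $x$ whose sign is not transparent term by term, and it is precisely here that the three standing hypotheses are designed to be used: $h''\le 0$ (Assumption \ref{hassumption1}) kills the concavity contributions with the favourable sign, the range bound $x\ge\frac{2-\alpha}{a}>\frac{1-\alpha}{a}$ fixes the sign of the linear factors of the form $(1-\alpha)-ax$, and $0<h'(x)<\frac12$ (Assumption \ref{hassumption2}) dominates the surviving $h'$– and $(1-h')$–cross terms. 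As a consistency check, when $h\equiv 0$ the expression collapses to $G'(x)=-a(x^*)^{\alpha}x^{-\alpha}e^{-a(x-k)}\bigl[(1-\alpha)-ax\bigr]>0$, recovering the classical time–consistent case. The actual write–up will consist mainly of carefully regrouping $G'$, after multiplying through by $e^{av(x)}$ to normalize one exponential, into a sum of manifestly nonnegative pieces.
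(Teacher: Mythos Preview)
Your reduction to $F(x^*)=0$, $F'(x^*)=0$, $G(x^*)=0$ is correct, but the heart of the argument — the inequality $G'(x)>0$ on $[\tfrac{2-\alpha}{a},x^*]$ — is asserted, not proved, and there is reason to doubt it can be obtained by ``regrouping into manifestly nonnegative pieces''. Differentiating your $G$ gives (with $v=x^*-h(x)-k$, $w=x-h(x)-k$)
\[
G'(x)=ae^{-av}\bigl[-(1+\alpha)h'-xh''+ax(h')^{2}\bigr]
+a\Bigl(\tfrac{x^*}{x}\Bigr)^{\alpha}e^{-aw}\bigl[xh''+ (1-h')\bigl(ax(1-h')-(1-\alpha)\bigr)\bigr].
\]
For linear $h$ (so $h''=0$) and $x$ near $\tfrac{2-\alpha}{a}$ one has $axh'<\tfrac{2-\alpha}{2}<1<1+\alpha$, so the first bracket is strictly negative; the positive second bracket must therefore dominate via the factor $(x^*/x)^{\alpha}e^{a(x^*-x)}$. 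That is a quantitative balance, not a termwise sign argument, and you have not indicated how the three hypotheses (\ref{x0assumption})--(\ref{hassumption2}) close it. Even at the single point $x=x^*$ (where the exponentials coincide) the combined bracket equals $2Ap^{2}-2(A+\alpha)p+(A-1+\alpha)$ with $A=ax^*$, $p=h'(x^*)$, and this is negative for $p$ close to $\tfrac12$ whenever $A<2$; only the sharper first bound in (\ref{hassumption2}) can save you, and you have not used it.

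The paper avoids differentiating $F$ altogether. It freezes the habit argument and writes $F(x)\ge 0\Leftrightarrow\Phi(x^*;x)\ge\Phi(x;x)$ for $\Phi(y;x)=y^{-\alpha}(1-e^{-a(y-h(x)-k)})$, then expresses the difference as $\int_x^{x^*}y^{-\alpha-1}\phi(y;x)\,\md y$ with $\phi(y;x)=-\alpha+(ay+\alpha)e^{-a(y-h(x)-k)}$. Since $\phi(\cdot;x)$ changes sign exactly once at some $\xi(x)\in(x,x^*)$ and $y^{-\alpha-1}$ is decreasing, the integral is bounded below by $\xi(x)^{-\alpha-1}\int_x^{x^*}\phi(y;x)\,\md y$. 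The assumption $x\ge\tfrac{2-\alpha}{a}$ makes $\phi(\cdot;x)$ convex on $[x,x^*]$, and convexity plus monotonicity gives $\int_x^{x^*}\phi\,\md y\ge\tfrac12|\phi'(\xi(x);x)|\bigl[(\xi(x)-x)^2-(x^*-\xi(x))^2\bigr]$. The proof finishes by showing $\xi(x)-x\ge x^*-\xi(x)$: implicit differentiation gives $\xi'(x)=h'(x)\bigl(1+\tfrac{1}{a\xi(x)+\alpha-1}\bigr)$, and the \emph{first} bound in (\ref{hassumption2}) is designed exactly so that $\xi'<\tfrac12$, whence $2\xi(x)-x-x^*$ is decreasing with value $0$ at $x^*$.

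So your route is genuinely different, but the key step is missing and does not appear to reduce to a sign check; the paper's integral/convexity argument sidesteps the second derivative of $F$ entirely and shows precisely where each hypothesis enters.
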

\begin{proof}
  Consider the family of functions parameterized by $0<x<x^*$:
  \[
  \Phi(y;x) = y^{-\alpha}(1-e^{-a(y-h(x)-k)}).
  \]
  Then $F(x)\geq (>)0$ if and only if $\Phi(x^*;x)\geq (>)\Phi(x;x)$. Moreover, $\Phi'(y;x)=y^{-\alpha-1}\phi(y;x)$ with
  \[
  \phi(y;x) = -\alpha+(ay+\alpha)e^{-y-h(x)-k}.
  \]
  Using similar arguments as the proof of Lemma \ref{applm1}, we find $\phi'(y;x)>0$ for $y<\frac{1-\alpha}{a}$, $\phi'(y;x)<0$ for $y>\frac{1-\alpha}{a}$, $\phi(x;x)=\psi(x)>0$ for $0<x<x^*$ (see Remark \ref{psi0}), and
  \begin{align*}
  \phi(x^*;x)&=-\alpha+(ax^*+\alpha)e^{-a(x^*-h(x)-k)} \\
             &<-\alpha+(ax^*+\alpha)e^{-a(x^*-h(x^*)-k)} \\
             &=0
  \end{align*}
for $0<x<x^*$. Therefore, there exists a unique $\max\{\frac{1-\alpha}{a},x\}<\xi(x)<x^*$ such that $\phi(\xi(x);x)=0$. Based on implicit function theorem and the chain rule, we have
\[
\xi'(x)=\frac{h'(x)(a\xi(x)+\alpha)}{a\xi(x)-1+\alpha}>0.
\]
From $\phi(\xi(x^*),x^*)=\phi(x^*,x^*)=0$ we know $\xi(x^*)=x^*$. As a consequence, $x^* = \xi(x^*)>\xi(0)=x_0^*>\frac{2-\alpha}{a}$. We now have already known $\phi(y;x)>0$ for $x<y<\xi(x)$ and $\phi(y;x)<0$ for $\xi(x)<y<x^*$. Elementary calculus shows
\begin{align*}
 \Phi(x^*;x)-\Phi(x;x) &= \int_x^{x^*}y^{-\alpha-1}\phi(y;x) \md y\\
   & \geq \int_x^{\xi(x)}\xi(x)^{-\alpha-1}\phi(y;x)\md y +\int_{\xi(x)}^{x^*}\xi(x)^{-\alpha-1}\phi(y;x)\md y\\
   &= \xi(x)^{-\alpha-1}\int_x^{x^*}\phi(y;x)\md y.
\end{align*}
Now we only need to show
\[
\int_x^{x^*}\phi(y;x)\md x\geq 0, \frac{2-\alpha}{a}\leq x<x^*.
\]
To see this, using
\[
\phi''(y;x) =-a^2(2-ay-\alpha)e^{-a(y-h(x)-k)},
\]
if $x\geq \frac{2-\alpha}{a}$, then $y>\frac{2-\alpha}{a}$ for $x<y<x^*$ and $\phi(\cdot;x)$ is convex on $(x,x^*)$. Combining this with the fact that $\phi(\cdot;x)$ is decreasing on $(x,x^*)$, we have
\begin{align*}
\int_x^{x^*}\phi(y;x)\md y&= \int_x^{\xi(x)}\phi(y;x)\md y-\int_{\xi(x)}^{x^*}(-\phi(y;x))\md y \\
&\geq \frac{1}{2}(\xi(x)-x)^2|\phi'(\xi(x);x)|-\frac{1}{2}(x^*-\xi(x))|\phi'(\xi(x);x)| \\
&=\frac{1}{2}|\phi'(\xi(x);x)|[(\xi(x)-x)^2-(x^*-\xi(x))^2].
\end{align*}
We finally prove $\xi(x)-x \geq x^*-\xi(x)$ for $\frac{2-\alpha}{a}<x<x^*$, which will complete the proof. Indeed, denote $\triangle(x)= 2\xi(x)-x-x^*$, based on (\ref{hassumption2}), we have
\begin{align*}
\triangle'(x)&=2\xi'(x)-1 \\
             &=2h'(x)(1+\frac{1}{a\xi(x)+\alpha-1})-1\\
             &<2h'(x)(1+\frac{1}{a\xi(0)+\alpha-1})-1\\
             &=2h'(x)(1+\frac{1}{ax_0^*+\alpha-1})-1\\
             &\leq 0.
\end{align*}
Thus, $\triangle(x^*)=0$ leads to the desired conclusion.
\end{proof}
\begin{lemma}
\label{applm2}
  If $F(x)=0$ for some $0<x<\frac{2-\alpha}{a}$, then $F'(x)<0$.
\end{lemma}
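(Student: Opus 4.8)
\emph{Plan.} Fix $x\in(0,\tfrac{2-\alpha}{a})$ with $F(x)=0$ and abbreviate $q=q(x)=x-h(x)-k$, $p=p(x)=x^*-h(x)-k$, so that $F(x)=x^\alpha(1-e^{-ap})-(x^*)^\alpha(1-e^{-aq})$. I would begin with two preliminary observations. First, $x<x^*$: indeed $x<\tfrac{2-\alpha}{a}<x_0^*<x^*$ by \eqref{x0assumption} together with the monotonicity of $\xi$ and the identities $\xi(0)=x_0^*$, $\xi(x^*)=x^*$ established in the preceding lemma. Second, $\psi(x^*)=0$ forces $e^{-a(x^*-h(x^*)-k)}=\tfrac{\alpha}{ax^*+\alpha}<1$, hence $x^*-h(x^*)-k>0$ and, since $h$ is increasing, $p>0$; thus $x^\alpha(1-e^{-ap})>0$, and from $F(x)=0$ we read off $1-e^{-aq}>0$, i.e. $q>0$, equivalently $x>x_k$, where $x_k$ is the unique point with $x_k-h(x_k)=k$ (note $k\le x_k\le 2k$ because $\tfrac12 x\le x-h(x)\le x$ as $h(0)=0$ and $0<h'<\tfrac12$).

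The second step is to differentiate and eliminate. Using $p'=-h'$ and $q'=1-h'$ one gets
\[
F'(x)=\alpha x^{\alpha-1}(1-e^{-ap})-ah'(x)x^\alpha e^{-ap}-a(1-h'(x))(x^*)^\alpha e^{-aq}.
\]
Substituting the constraint $x^\alpha(1-e^{-ap})=(x^*)^\alpha(1-e^{-aq})$ into the first term and $x^\alpha=(x^*)^\alpha\tfrac{1-e^{-aq}}{1-e^{-ap}}$ into the second, and then factoring out the positive quantity $1-e^{-aq}$, yields
\[
F'(x)=(x^*)^\alpha\Big[(1-e^{-aq})\Big(\tfrac{\alpha}{x}-\tfrac{ah'(x)e^{-ap}}{1-e^{-ap}}\Big)-a(1-h'(x))e^{-aq}\Big].
\]
Dividing by $(1-e^{-aq})>0$ shows that $F'(x)<0$ is \emph{equivalent} to
\[
\frac{\alpha}{x}<\frac{a(1-h'(x))}{e^{aq}-1}+\frac{ah'(x)}{e^{ap}-1}.
\]
Since the second summand on the right is nonnegative, it suffices to prove the clean one-variable inequality
\[
\alpha\big(e^{aq(x)}-1\big)<ax\big(1-h'(x)\big)\qquad\text{for }x\in(x_k,\tfrac{2-\alpha}{a}).\tag{$\star$}
\]

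The third step is to prove $(\star)$ by monotonicity. Put $\Psi(x)=ax(1-h'(x))-\alpha(e^{aq(x)}-1)$. Then $\Psi(x_k)=ax_k(1-h'(x_k))>0$ since $q(x_k)=0$, and
\[
\Psi'(x)=a(1-h'(x))\big(1-\alpha e^{aq(x)}\big)-axh''(x)\;\ge\;a(1-h'(x))\big(1-\alpha e^{aq(x)}\big),
\]
using $h''\le0$ from \eqref{hassumption1} and $1-h'(x)\in(\tfrac12,1)$. Because $h\ge0$ one has $q(x)\le x-k<\tfrac{2-\alpha}{a}-k$, and \eqref{x0assumption} rewritten as $e^{\alpha-2+ak}>\alpha/2$ gives $e^{aq(x)}<e^{2-\alpha-ak}<2/\alpha$; I would upgrade this to $\alpha e^{aq(x)}\le1$ on the relevant range so that $\Psi'\ge0$, whence $\Psi>0$ there and $(\star)$, hence $F'(x)<0$, follows. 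The delicate point — and the main obstacle — is precisely this last estimate: the crude bound only yields $\alpha e^{aq(x)}<2$, so to force the right-hand side of $(\star)$ to dominate one must bring in the explicit upper bound \eqref{hassumption2} on $h'$ (which keeps $ax(1-h'(x))$ large, in particular bigger than $ax/2$ and hence bounded below by $ax_k/2\ge ak/2$ near $x_k$) and, where the pointwise bound $q(x)\le x-k$ is insufficient, the constraint $F(x)=0$ itself, which through $(x/x^*)^\alpha=\tfrac{1-e^{-aq}}{1-e^{-ap}}$ ties $q$ to $p=q+(x^*-x)$ and thereby bounds $q$ in terms of $x^*-x$; all the remaining manipulations are elementary.
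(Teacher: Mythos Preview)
Your algebra up to and including the equivalent characterization
\[
\frac{\alpha}{x}<\frac{a(1-h'(x))}{e^{aq}-1}+\frac{ah'(x)}{e^{ap}-1}
\]
is correct, and so is the observation that it suffices to prove $(\star)$. The gap is that you do not actually prove $(\star)$. Your monotonicity argument for $\Psi$ hinges on $\alpha e^{aq}\le 1$, but, as you yourself note, the hypotheses only give $\alpha e^{aq}<2$; the ``upgrade'' is asserted, not proved. The closing sentences (bring in \eqref{hassumption2}, use $F(x)=0$ to bound $q$ via $x^*-x$) are a sketch of a hope, not an argument: in particular, $(\star)$ does not involve $p$ or $x^*$ at all once you have dropped the $h'$-term, so the constraint $F(x)=0$ gives you nothing to work with at that stage, and the part of \eqref{hassumption2} that carries the relevant information---the factor $\frac{1}{1-e^{-ax^*}}$---never enters. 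I do not see a way to close $(\star)$ from the stated assumptions alone; throwing away the $\frac{ah'}{e^{ap}-1}$ term appears to be too crude.

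The paper avoids this by a different substitution. Instead of eliminating $x^\alpha$ and $x^{\alpha-1}$ entirely, it writes $\alpha x^{\alpha-1}(1-e^{-ap})$ as the convex combination $h'\cdot\alpha x^{\alpha-1}(1-e^{-ap})+(1-h')\cdot\alpha x^{\alpha-1}(1-e^{-ap})$ and substitutes the constraint only in the second summand. After regrouping this yields
\[
F'(x)=h'(x)\,x^{\alpha-1}\bigl[\alpha-(ax+\alpha)e^{-ap}\bigr]+(1-h'(x))\,\tfrac{(x^*)^{\alpha}}{x}\bigl[\alpha-(ax+\alpha)e^{-aq}\bigr],
\]
with the first bracket positive and the second equal to $-\psi(x)<0$. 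Since $x^{\alpha-1}<(x^*)^\alpha/x$, a sufficient condition is obtained by replacing both prefactors by the common one, which reduces to
\[
\alpha<(ax+\alpha)e^{-aq}\bigl[h'(x)\bigl(e^{-a(x^*-x)}-1\bigr)+1\bigr].
\]
This keeps the $h'$-dependence coupled to $x^*$, and the lower bound $(ax+\alpha)e^{-aq}>\min\{\alpha e^{ak},2e^{\alpha-2+ak}\}$ on $(0,\tfrac{2-\alpha}{a})$ together with the second clause of \eqref{hassumption2} is exactly what is needed to conclude. The moral is that the correct sufficient condition must retain the interaction between $h'$ and $x^*$; your $(\star)$ discards it and then cannot recover.
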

\begin{proof}
  Slightly abusing the notation, we assume that there is a $x$ such that $0<x<\frac{2-\alpha}{a}$ and
  \[
  x^{\alpha}(1-e^{-a(x^*-h(x)-k)})=(x^*)^{\alpha}(1-e^{-a(x-h(x)-k)}).
  \]

Direct computation yields,
\begin{align*}
  F'(x) &=\alpha x^{\alpha-1}(1-e^{-a(x^*-h(x)-k)})-ax^{\alpha}h'(x)e^{-a(x^*-h(x)-k)}-a(x^*)^{\alpha}(1-h'(x))e^{-a(x-h(x)-k)}\\
  &=h'(x)\alpha x^{\alpha-1}(1-e^{-a(x^*-h(x)-k)})+(1-h'(x))\frac{(x^*)^{\alpha}}{x}(1-e^{-a(x-h(x)-k)})\\
  &-ax^{\alpha}h'(x)e^{-a(x^*-h(x)-k)}-a(x^*)^{\alpha}(1-h'(x))e^{-a(x-h(x)-k)}\\
  &=h'(x)x^{\alpha-1}(\alpha-(ax+\alpha)e^{-a(x^*-h(x)-k)})\\
  &+(1-h'(x))\frac{(x^*)^{\alpha}}{x}(\alpha-(ax+\alpha)e^{-a(x-h(x)-k)}).
\end{align*}
Therefore, $F'(x)<0$ is equivalent to
\begin{equation}
\label{app9}
h'(x)x^{\alpha-1}(\alpha-(ax+\alpha)e^{-a(x^*-h(x)-k)})
                 <(1-h'(x))\frac{(x^*)^{\alpha}}{x}(-\alpha+(ax+\alpha)e^{-a(x-h(x)-k)}).
\end{equation}
Clearly,
\[
\alpha-(ax+\alpha)e^{-a(x^*-h(x)-k)}>\alpha-(ax^*+\alpha)e^{-a(x^*-h(x^*)-k)}=0,
\]
and
\[
-\alpha+(ax+\alpha)e^{-a(x-h(x)-k)}=\psi(x)>0.
\]
Thus, one sufficient condition for (\ref{app9}) is
\[
h'(x)(\alpha-(ax+\alpha)e^{-a(x^*-h(x)-k)})<(1-h'(x))(-\alpha+(ax+\alpha)e^{-a(x-h(x)-k)}),
\]
being equivalent to
\[
\alpha<(ax+\alpha)e^{-a(x-h(x)-k)}[h'(x)(e^{-a(x^*-x)}-1)+1].
\]
Using the properties of $\psi$ (see the proof of Lemma \ref{applm1}), we know
\[
(ax+\alpha)e^{-a(x-h(x)-k)}> \min\{ \alpha e^{ak},2e^{\alpha-2+ak}\}, 0<x<\frac{2-\alpha}{a}.
\]
Using the assumption (\ref{hassumption2}), we find
\begin{align*}
(ax+\alpha)e^{-a(x-h(x)-k)}[h'(x)(e^{-a(x^*-x)}-1)+1]&>\min\{ \alpha e^{ak},2e^{\alpha-2+ak}\}[h'(x)(e^{-ax^*}-1)+1]\\
&>\alpha.
\end{align*}
The proof is thus completed.
\end{proof}
We are now ready to give the proof of Proposition \ref{exmconclusion}.
\begin{proof}[Proof of Proposition \ref{exmconclusion}]
  Based on Lemma \ref{applm1}, we only need to prove $F(x)\geq 0$ for $0<x<\frac{2-\alpha}{a}$. If not, we assume that there is $0<x'<\frac{2-\alpha}{a}$ such that $F(x')<0$. By continuity and the fact $F(\frac{2-\alpha}{a})>0$, $F$ has roots in $(x',\frac{2-\alpha}{a})$ based on mid-value theorem. Define
  \[
  \hat{x}=\inf\{x'<x<\frac{2-\alpha}{a}:F(x)=0\}.
  \]
  Clearly, $F(\hat{x})=0$ so that $\hat{x}>x'$. Using Lemma \ref{applm2}, we know $F'(\hat{x})<0$. Thus, for a sufficiently small $\epsilon$, $F(x)>0$ for $\hat{x}-\epsilon<x<\hat{x}$. Using mid-value theorem again, we can find a $\tilde{x}<\hat{x}-\epsilon$ such that $F(\tilde{x})=0$, contradicting to the definition of $\hat{x}$. The proof is completed.
\end{proof}
{ 
\section{Proof of Lemma \ref{sBVPthm}}
\label{extsBVP}
We first study the existence of a solution to (\ref{sBVP1}). To do this, we need to consider a properly introduced auxiliary problem to reduce the singularity and then apply Leray-Schauder topological degree theory and Green's function. To be specific, we consider a coordinate transform $t \mapsto s=\sqrt{t}$, and $y(s)=h(t)=h(s^2)$. Now chain rule implies $h'(t)=y'(s)/2s$, $h''(t)=y''(s)/4s^2-y'(s)/4s^3$. Therefore the singular boundary value problem of $y$ reads:
\begin{equation}
	\label{sBVP1'}\tag{\ref{sBVP1}'}
	\left\{
	\begin{aligned}
		&y''=\frac{y'}{s}-2\kappa\left[\frac{\beta_1\beta_2s^4}{y^2}-\frac{(\beta_1+\beta_2)s^2}{y}+\frac{y's}{2y}\right],\\
		&y(0)=0, y(\sqrt{2})=\beta_1+\beta_2.
	\end{aligned}
	\right.
\end{equation}
Because the coordinate transform we apply is one-to-one from $[0,2]$ to $[0,\sqrt{2}]$, the existence of a solution to (\ref{sBVP1'}) will give a the existence of a solution to (\ref{sBVP1}). For the estimate $\beta_1\leq h'\leq \beta_2$, we only need to establish the estimate $2\beta_1 s\leq y'(s)\leq 2\beta_2 s$, $\forall s\in [0,\sqrt{2}]$. We give the following result about (\ref{sBVP1'}):
\begin{proposition}
	\label{sBVPsol}
	Problem (\ref{sBVP1'}) has a solution $y\in C^2[0,\sqrt{2}]$ such that $2\beta_1 s\leq y'(s)\leq 2\beta_2 s$, $\forall s\in [0,\sqrt{2}]$.
\end{proposition}
\begin{proof}
	We first consider two cut-off functions defined as follows:
	\[\chi_1(s,y)=
	\left\{
	\begin{aligned}
		&\beta_2 s^2, &\mif\ y>\beta_2 s^2,\\
		&\beta_1 s^2, &\mif\ y<\beta_1 s^2,\\
		&y,           &\ow,
	\end{aligned}	
	\right.
	\]
		\[\chi_2(s,z)=
	\left\{
	\begin{aligned}
		&2\beta_2 s, &\mif\ z>2\beta_2 s,\\
		&2\beta_1 s, &\mif\ z<2\beta_1 s,\\
		&z,           &\ow.
	\end{aligned}	
	\right.
	\]
	Define
	\[
	H(s,y,z)\triangleq \frac{\chi_2(s,z)}{s}-2\kappa\left[\frac{\beta_1\beta_2s^4}{\chi_1(s,y)^2}-\frac{(\beta_1+\beta_2)s^2}{\chi_1(s,y)}+\frac{\chi_2(s,z)s}{2\chi_1(s,y)}\right].
	\]
	We now consider the existence of a solution to the following `cut problem':
	 \begin{equation}
	 	\label{sBVP1''}\tag{\ref{sBVP1}''}
	 	\left\{
	 	\begin{aligned}
	 		&y''=H(s,y,y'),\\
	 		&y(0)=0, y(\sqrt{2})=\beta_1+\beta_2.
	 	\end{aligned}
	 	\right.
	 \end{equation}
 It is well known that { $y\in C^1\triangleq C^1[0,\sqrt{2}]$} is a solution to (\ref{sBVP1''}) if and only if it is a fixed point of the operator $T:C^1 \to C^1$, with
 \[
 (Ty)(s)=\int_0^{\sqrt{2}}G(s,r)H(r,y(r),y'(r))\md r+(\beta_1+\beta_2)s/{\sqrt{2}},
 \]
 where $G$ is the Green function on $[0,\sqrt{2}]$ with homogeneous Dirichlet  boundary condition. Because it has analytical form, we can easily conclude that both $G$ and $\partial_t G$ are bounded. Because of the cut-off, there is a constant $C=C(\beta_1,\beta_2)$ such that $|H(s,y,z)|\leq C$. Thus $Ty$ has bounded second order derivative on $[0,\sqrt{2}]$ and $T(C^1)\subset C^2$. By Azela-Ascoli theorem, $T$ is a compact operator on $C^1$, and there exists a $R$ which is large enough, such that $T(C^1)\subset B(0,R/2)$. { Here and afterwards in this proof, we denote by $B(0,R)$ the ball in $C^1$, centered at 0, with radius $R$.} Now consider $T^\alpha=\alpha T$ for $\alpha\in [0,1]$, we have $T^{\alpha}(C^1)\subset B(0,\alpha R/2)$. Thus for any $y\in \partial B(0,R)$ and $\alpha\in[0,1]$, $y\neq T^{\alpha}y$. By homotopy property of Leray-Schauder degree ({ c.f. Theorem 11.7 of \citet*{Brown1993}}), we have
 \[
 \deg(I-T,B(0,R))=\deg(I,B(0,R))=1.
 \]
 By degree theory, we have a fixed point $y\in C^2$, that is to say, a solution to (\ref{sBVP1''}). We then claim that
 \begin{equation}
 	\label{claim1}
 2 \beta_1 s\leq y'(s)\leq 2\beta_2 s,\forall s\in [0,\sqrt{2}].
 \end{equation}
 If (\ref{claim1}) is true, then integrating on $[0,s]$ gives $\beta_1 s^2\leq y(s) \leq s^2$. By definition of $\chi_1$ and $\chi_2$, (\ref{sBVP1''}) is reduced to (\ref{sBVP1'}), and the desired estimations hold naturally and this finishes the proof. To show (\ref{claim1}), we assume by contradiction that $y'(s_0)-2\beta_2 s_0=\max_{s\in [0,\sqrt{2}]}\{y'(s)-2\beta_2 s\}>0$. There are three cases to be discussed:
 \begin{itemize}
 	\item[(1)]$s_0=0$. By continuity, there exist a $\delta>0$ such that $y'(s)>2\beta_2 s$ for $s\in [0,\delta]$. Define $\delta^*=\sup\{\delta\in (0,\sqrt{2}]:y'(s)>2\beta_2 s,\forall s\in [0,\delta] \}$. Then $0<\delta^*\leq \sqrt{2}$. If $\delta^*<\sqrt{2}$, by integration we have $y(s)\geq\beta_2 s^2$ for $s\in [0,\delta^*]$, thus $\chi_1(s,y(s))=\beta_2 s^2$, $\chi_2(s,y'(s))=2\beta_2 s$, which gives $y''(s)=2\beta_2 s$. As a consequences, $y'(\delta^*)=y'(0)+2\beta_2 \delta^*>2\beta_2 \delta^*$. Again by continuity, the strict inequality can be extended to a larger $\delta'>\delta^*$, contradicting to the definition of $\delta^*$. Therefore $\delta^*=\sqrt{2}$, which leads to $y(s)=y'(0)s+\beta_2 s^2$ for $s\in [0,\sqrt{2}]$, but now $y(\sqrt{2})=\sqrt{2}y'(0)+2\beta_2>2\beta_2>\beta_1+\beta_2$ is contradicting to the boundary condition.
 	\item[(2).]$0<s_0<\sqrt{2}$. In this case we have $y''(s_0)=2\beta_2$. By the equation in (\ref{sBVP1''}) we have
 	\[
 	\frac{\beta_1 \beta_2 s_0^4}{\chi_1(s_0,y(s_0))^2}-\frac{(\beta_1 s_0^2)}{\chi_1(s_0,y(s_0))}=0.
 	\]
 	Therefore $\chi_1(s_0,y(s_0))=\beta_2 s_0^2$, which leads to $y(s_0)\geq \beta_2 s_0^2$. Now repeating the same argument as in Case (1) gives the lower bound $y(\sqrt{2})\geq 2\beta_2>\beta_1+\beta_2$, a contradiction.
 	\item[(3).]$s_0=\sqrt{2}$. In this case by boundary condition we can find a small $\epsilon$ such that $\chi_1(s,y(s))<\beta_2 s^2$ and $\chi_2(s,y'(s))=2\beta_2 s$ on $[\sqrt{2}-\epsilon,\sqrt{2}]$. This implies
 	\[
 	y''(s)-2\beta_2=-2\kappa\frac{\beta_1 s^2(\beta_2 s^2-\chi_1(s,y(s)))}{\chi_1(s,y(s))^2}<0.
 	\]
 	That is, $y'(s)-2\beta_2 s$ is decreasing on $[\sqrt{2}-\epsilon,\sqrt{2}]$, contradicting to the assumption that $y'(s)-2\beta_2 s$ attains its maximum at $s_0$.
 \end{itemize}
We have proved $y'(s)\leq 2\beta_2 s$ for any $s\in [0,\sqrt{2}]$. By symmetry, it is straightforward to show $y'(s)\geq 2\beta_1 s$ for any $s\in [0,\sqrt{2}]$. We have now proved (\ref{claim1}), and thus Proposition \ref{sBVPsol}.
\end{proof}
We now turn to problem (\ref{sBVP2}). $h$ is now known, and satisfies the estimation $\beta_1 t\leq h(t)\leq \beta_2 t$, for any $t\in [0,2]$.
\begin{proposition}
	\label{appDpro}
	For any fixed $b>0$, Problem (\ref{sBVP2}) has a positive solution $\psi^b$ which is strictly increasing on $(0,b)$.
\end{proposition}
\begin{proof}
	Define a function $u:[0,2]\to [0,b]$ as
	\[
	u(z)\triangleq \frac{b\int_0^z e^{-\frac{\kappa}{2}\int_{z'}^2\frac{1}{h(y)}\md y} \md z'}{\int_0^2 e^{-\frac{\kappa}{2}\int_{z'}^2\frac{1}{h(y)}\md y} \md z'}.
	\]
	Clearly, it is smooth and is strictly increasing and thus is one-to-one from $[0,2]$ to $[0,b]$. Moreover, it satisfies
	\begin{equation}
		\label{uequation}
	u''=\frac{\kappa}{2}\frac{1}{h(z)}u',
	\end{equation}
	as well as the boundary condition $u(0)=0$, $u(2)=b$. We further claim that
	\begin{equation}
		\label{claim2}
		(z/2)^{1/\alpha_1}\leq u(z)/b\leq (z/2)^{1/\alpha_2}.
	\end{equation}
    To simplify our analysis, we denote
    \[
    P_i(z)=\frac{\int_0^z e^{-\frac{\kappa}{2}\int_{z'}^2\frac{1}{h(y)}\md y} \md z'}{z^{1/\alpha_i}},i=1,2.
    \]
    It is clear that one sufficient condition for (\ref{claim2}) is, { $P_1$} is decreasing and { $P_2$} is increasing. To verify this, we compute
    \begin{align*}
    P_i'(z)&=\frac{e^{-\frac{\kappa}{2}\int_{z}^2\frac{1}{h(y)}\md y}z-(1+\frac{\kappa}{2\beta_i})\int_0^z e^{-\frac{\kappa}{2}\int_{z'}^2\frac{1}{h(y)}\md y} \md z' }{z^{2+\frac{\kappa}{2\beta_i}}} \\
    &=\frac{\frac{\kappa}{2}\int_0^z\left( \frac{z'}{h(z')}-\frac{1}{\beta_i}\right)e^{-\frac{\kappa}{2}\int_{z'}^2\frac{1}{h(y)}\md y} \md z'  } {z^{2+\frac{\kappa}{2\beta_i}}}.
    \end{align*}
It is clear that the desired monotonicity is guaranteed by the estimation of $h$: $\beta_1 t\leq h(t)\leq \beta_2 t$. We now go back to (\ref{sBVP2}). Because $u$ is strictly increasing, we can denote by $\psi^b$ its inverse function, which is strictly increasing from $[0,b]$ to $[0,2]$ and satisfies boundary conditions of (\ref{sBVP2}). Moreover, by implicit function theorem, it is straightforward to derive the relations $(\psi^b)'=1/u'$, $(\psi^b)''=-u''/(u')^3$. Therefore (\ref{uequation}) implies that $\psi^b$ satisfies the equation of (\ref{sBVP2}). Finally, the estimation of $\psi^b$ is derived from (\ref{claim2}) by the correspondence $z\mapsto \psi^b(x)$, $x\mapsto u(z)$.
\end{proof}
}

\end{document}